\newtheorem{thm}{Theorem}[section]
\newtheorem{cor}[thm]{Corollary}
\newtheorem{lem}[thm]{Lemma}
\newtheorem{prop}[thm]{Proposition}
\theoremstyle{definition}
\newtheorem{defn}[thm]{Definition}
\newtheorem{rem}[thm]{Remark}
\newtheorem{example}[thm]{Example}
\newtheorem{question}[thm]{Question}
\numberwithin{equation}{section}
\def\epsilon{\varepsilon}
\newcommand{\erre}{\mathbb R}
\newcommand{\fbl}{\operatorname{FBL}}
\newcommand{\R}{\mathbb{R}}
\newcommand{\N}{\mathbb{N}}
\renewcommand{\leq}{\ensuremath{\leqslant}}
\renewcommand{\le}{\ensuremath{\leqslant}}
\renewcommand{\geq}{\ensuremath{\geqslant}}
\renewcommand{\ge}{\ensuremath{\geqslant}}
\title[Banach lattices of positively homogeneous functions]{Banach lattices of positively homogeneous functions induced by a Banach space}
\author[N.J.~Laustsen]{Niels Jakob Laustsen}
\address{Department of Mathematics and Statistics, Fylde College, Lancaster University, Lancaster,
LA1 4YF, United Kingdom}
\email{n.laustsen@lancaster.ac.uk}
\author[P.~Tradacete]{Pedro Tradacete}
\address{Instituto de Ciencias Matem\'aticas (CSIC-UAM-UC3M-UCM)\\
Consejo Superior de Investigaciones Cient\'ificas\\
C/ Nicol\'as Cabrera, 13--15, Campus de Cantoblanco UAM\\
28049 Madrid, Spain.}
\email{pedro.tradacete@icmat.es}
\subjclass[2020]{46B42, 46E05, 46B50} 
\keywords{Banach lattice; Banach space; free Banach lattice; lattice homomorphism}
\begin{document}

\setcounter{tocdepth}{1}
\date{\today}

\begin{abstract}
Motivated by the construction of the free Banach lattice generated by a Banach space, we introduce and study several vector and Banach lattices of positively homogeneous functions defined on the dual of a Banach space~$E$. The relations between these lattices allow us to give multiple characterizations of when the underlying Banach space~$E$ is finite-dimensional and when it is reflexive. Furthermore, we show that lattice homomorphisms between free Banach lattices are always composition operators, and study how these operators behave on the scale of lattices of positively homogeneous functions.
\end{abstract}

\maketitle

\section{Introduction}
\noindent
From the point of view of Banach space theory, Banach lattices constitute an important class which includes many of the classical function spaces arising in analysis. The order structure of a Banach lattice is deeply tied to the geometry of its norm, and much research has been carried out to elucidate this relationship.

A common approach in the literature is to use a Banach space~$E$ to construct, or ``induce'', a Banach lattice. We shall study a collection of Banach and vector lattices obtained in this way, consisting of positively homogeneous functions defined on the dual space of~$E$, with the aim of understanding how key geometric properties of~$E$ are expressed in the order structure of and relationship between the Banach lattices it induces.

Our motivation comes from the study of ``free Banach lattices''. This notion arose in the  
work of de~Pag\-ter and Wick\-stead~\cite{dePW}, who defined the free Banach lattice over a set.  Sub\-sequent\-ly, Avil\'{e}s, Rodr\'{\i}guez and Tradacete~\cite{ART} defined the free Banach lattice~$\fbl[E]$ generated by a Banach space~$E$ and observed that this object generalizes that of de~Pag\-ter and Wick\-stead in the sense that, for a set~$\Gamma$, $\fbl[\ell_1(\Gamma)]$~is the free Banach lattice over~$\Gamma$.

As shown in \cite{AMR:20, AMR:21, dePW}, free Banach lattices are closely related to the notion of projectivity. They also played a key role in the recent construction of push-outs in the category of Banach lattices \cite{AT},  and they have been instrumental in resolving a number of open problems about the structure of Banach lattices, including the following:
\begin{itemize}
    \item A Banach lattice may be lattice weakly compactly generated, but not embed into any weakly compactly generated Banach space \cite[Corollary~5.5]{ART}; this answered a question of Diestel.
\item A lattice homomorphism between Banach lattices need not attain its operator norm \cite{Norm-attaining}.
\item A closed, infinite-dimensional subspace of a Banach lattice need not contain any bi\-basic sequences  \cite[Theorem 7.5]{OTTT}. 
\end{itemize}
Other recent results concern the relationship between a Banach space~$E$ and the free Banach lattice~$\fbl[E]$ it generates. Let us briefly mention a few of these: 
\begin{itemize}
    \item $E$ is separable if and only if $\fbl[E]$ is separable \cite[Theorem~3.2]{ART}, if and only if $\fbl[E]$ has a quasi-interior point \cite[Proposition~9.4]{OTTT}.
\item $\fbl[E]^*$ is order continuous precisely when $E$ does not contain any complemented copies of $\ell_1$  \cite[Theorem 9.20]{OTTT}.
\item $\fbl[E]$ satisfies an upper $p$-estimate if and only if the identity operator on~$E^*$ is $(p^*,1)$-summing, where~$p^*$ is the conjugate exponent of~$p$  \cite[Theorem 9.21]{OTTT}. 
\item $\fbl[E]$ always satisfies the countable chain condition \cite{APR}.
\end{itemize}

Formally speaking, the \emph{free Banach lattice generated by a Banach space~$E$} is a Banach lattice~$\fbl[E]$ together with a linear isometry $\delta^E\colon E\rightarrow \fbl[E]$ such that, for every Banach lattice~$X$ and every bounded linear operator $T\colon E\rightarrow X$, there exists a unique lattice homomorphism
$\hat T:\fbl[E]\rightarrow X$ such that the diagram 
\[ 	\xymatrix{\fbl[E]\ar^{\hat{T}}[drr]&&\\
	E\ar^{\delta^E}[u]\ar[rr]^T&&X } \]
is commutative, and $\|\hat T\|=\|T\|$. This construction defines a covariant functor from the category of Banach spaces and bounded linear operators into the category of Banach lattices and lattice homomorphisms.

Standard arguments show that if the free Banach lattice generated by a Banach space~$E$ exists, then it is unique up to isometric lattice isomorphism, so the significance of~\cite{ART} is the proof   that~$\fbl[E]$ exists. Constructing free vector lattices is quite straightforward (see \cite{Baker, Bleier}), but the method does not immediately carry over to Banach lattices (although it is possible to adapt it, as shown in~\cite{T19}).  Instead, the authors of~\cite{ART} took a different route, constructing~$\fbl[E]$ explicitly as a Banach lattice of functions. This additional feature has turned out to be very useful and is a cornerstone of our work. Let us therefore begin with a review of the construction.

Take a Banach space~$E$, and denote by~$H[E]$  the linear subspace of $\erre^{E^*}$
consisting of all positively homogeneous functions, where we recall that a function $f\colon E^*\to\R$ is positively homogeneous if $f(\lambda x^*)=\lambda f(x^*)$ for every $x^*\in E^*$ and \mbox{$\lambda\in[0,\infty)$}. 
For $f\in H[E]$, define
\begin{equation}\label{Defn:fblNorm}
    \|f\|_{\fbl[E]} =
	\sup\biggl\{\sum_{j=1}^n |f(x_j^\ast)| : n\in\mathbb{N},\, (x_j^*)_{j=1}^n\subset E^*,\,\sup_{\lVert x\rVert\le1} \sum_{j=1}^n |x_j^\ast(x)|\leq 1\biggr\}. 
\end{equation}
Then $H^1[E] =\{f\in H[E]: \|f\|_{\fbl[E]}<\infty\}$ is a Banach lattice with respect to the norm~$\|\cdot\|_{\fbl[E]}$
and the pointwise vector lattice operations. Now, for \mbox{$x\in E$}, the map $\delta_x\colon E^*\rightarrow \mathbb R$ defined by $\delta_x(x^\ast) = x^\ast(x)$ belongs to~$H^1[E]$ with $\|\delta_x\|_{\fbl[E]}=\|x\|_E$, and \cite[Theorem~2.5]{ART} shows that the free Banach lattice $\fbl[E]$ can be realized as the closed sublattice of $H^1[E]$ generated by $\{\delta_x:x\in E\}$, together with the linear isometry $\delta^E\colon E\rightarrow \fbl[E]$ given by $\delta^E(x) =\delta_x$ for $x\in E$.

This construction was generalized in~\cite{JLTTT} to produce the \emph{free $p$-convex Banach lattice} generated by a Banach space~$E$ for $1\le p\le\infty$. To state the definition, 
set 
\begin{equation}\label{Defn:FBLpNorm}
   \|f\|_{\fbl^p[E]} =
	\sup\biggl\{\Bigl(\sum_{j=1}^n |f(x_j^\ast)|^p\Bigr)^{\frac{1}{p}} : n\in\mathbb N,\, (x_j^*)_{j=1}^n\subset E^*,\,\lVert (x_j^*)_{j=1}^n\rVert_{p,{\normalfont{\text{weak}}}}\leq 1\biggr\}
\end{equation}
for $f\in H[E]$, where 
\begin{equation}\label{Eq:weakpsumnorm}
    \lVert (x_j^*)_{j=1}^n\rVert_{p,{\normalfont{\text{weak}}}} = \sup_{\lVert x\rVert\le 1}\biggl(\sum_{j=1}^n \lvert x^*_j(x)\rvert^p\biggr)^{\frac1p} 
\end{equation} 
denotes the weak $p$-summing norm of the $n$-tuple $(x_j^*)_{j=1}^n$ in~$E^*$. (As usual, expressions of the form $\bigl(\sum_{j=1}^n \lvert t_j\rvert^p\bigr)^{\frac1p}$ should be interpreted as $\max_{1\le j\le n}\lvert t_j\rvert$ for $p=\infty$.) 
Note that for $p=1$, \eqref{Eq:weakpsumnorm} can  equivalently be written as
\begin{equation}\label{Eq:weakpsumnorm:1}
 \lVert (x_j^*)_{j=1}^n\rVert_{1,{\normalfont{\text{weak}}}} = \sup_{\varepsilon_i=\pm 1}\Bigl\|\sum_{j=1}^n \varepsilon_j x_j^*\Bigr\|.
\end{equation}
Then $H^p[E]=\{f\in H[E]: \|f\|_{\fbl^p[E]}<\infty\}$
is a Banach lattice as before, and \cite[Theorem~6.1]{JLTTT} shows that the closed sublattice of~$H^p[E]$ generated by the set $\{\delta_x:x\in E\}$ together with the linear isometry $\delta^E\colon E\rightarrow \fbl^p[E]$ given by $\delta^E(x) = \delta_x$ for $x\in E$ is the free $p$-convex Banach lattice generated by $E$. 

We observe that $\fbl[E] = \fbl^1[E]$ because~\eqref{Defn:FBLpNorm} reduces to~\eqref{Defn:fblNorm} for $p=1$.
A~less obvious result, shown in \cite[Proposition~2.2]{OTTT}, is that $\fbl^\infty[E]$ can be identified with the space of positively homogeneous weak-$*$ continuous functions defined on the dual unit ball. 

The role of positive homogeneity in these constructions is not coincidental, but a consequence of the fact that the functions $\delta_x$ for $x\in E$ are our building blocks. They are linear, and hence lattice combinations of them will always be positively homogeneous. We remark that earlier work on Banach lattices of homogeneous functions can be found in \cite{Goullet, Rogalski70, Rogalski71, Wickstead}.

The purpose of the present paper is to investigate~$H^p[E]$ for $1\le p<\infty$ in its own right, together with a number of sublattices and ideals lying between~$\fbl^p[E]$ and~$H^p[E]$.  This will in particular lead to characterizations of the cases where the underlying Banach space~$E$ is finite-dimen\-sional (Theorem \ref{L5.3}) and reflexive (Theorem \ref{t:ref}). 

In Section \ref{S:4}, we will see that in the infinite-dimensional situation, one should be able to provide examples of weak* continuous functions in $H^p[E]$ that are not in $\fbl^p[E]$. Surprisingly, our proof requires the existence of a separable quotient of the underlying Banach space $E$. Although this might be just an artefact of the proof, in any case it provides a fairly general condition that allows us to distinguish $\fbl^p[E]$ within the weak* continuous functions of $H^p[E]$.

Finally, in Section~\ref{section:isomorphism} we study lattice homomorphisms between free Banach lattices. Our main result (Theorem~\ref{t:latticeiso}) states that  every lattice homomorphism $T\colon\fbl^p[E]\rightarrow \fbl^p[F]$, where $1\le p<\infty$ and~$E$ and~$F$ are Banach spaces, can be expressed as a composition operator
\[ Tf=f\circ\Phi_T\qquad (f\in \fbl^p[E]) \] 
for a unique map $\Phi_T\colon F^*\rightarrow E^*$, which is positively homogeneous and maps weakly summable sequences in~$F^*$ to weakly summable sequences in~$E^*$, and whose re\-stric\-tion to the closed unit ball of~$F^*$ is weak* continuous. We explore the consequences and applications of this result, including the im\-pact of the map~$\Phi_T$ on the other Banach lattices under investigation.

\section{Banach lattices of positively homogeneous functions defined on a dual Banach space}\label{S:2}

\noindent Take $p\in[1,\infty]$, and let~$E$ be a Banach space, always over the real field. (We refer to \cite{dHT} for the construction and study of free complex Banach lattices over a complex Banach space.) 
The Banach lattice $(H^p[E],\lVert\,\cdot\,\rVert_{\fbl^p[E]})$ plays a central role in the definition of~$\fbl^p[E]$, being the ambient lattice that~$\fbl^p[E]$ is defined as a sublattice of. We shall consider the following vector lattices and ideals, several of which were already mentioned in the Introduction; for ease of reference, we include them here:\label{Eq:latticelist}
\begin{align*}
H[E] &= \{ f\in\R^{E^*} : f\ \text{is positively homogeneous}\}\\
\operatorname{lat}\delta[E] &= \text{the sublattice of}\ H[E]\ \text{generated by}\ \{\delta_x : x\in E\}\\
I[E] &= \text{the ideal of}\ H[E]\ \text{generated by}\ \{\delta_x : x\in E\}\\
H^p[E] &= \{ f\in H[E] : \lVert f\rVert_{\fbl^p[E]}<\infty\}\\
\fbl^p[E] &= \text{the closed sublattice of}\ H^p[E]\ \text{generated by}\ \{\delta_x : x\in E\}\\
H_{w^*}[E] &= \{ f\in H[E] : f{\upharpoonright_{B_{E^*}}}\ \text{is weak*-continuous}\}\\
I_{w^*}[E] &= I[E]\cap H_{w^*}[E]\\
H_{w^*}^p[E] &= H^p[E]\cap H_{w^*}[E]\\
J_{w^*}^p[E] &= \text{the ideal of}\ H^p[E]\ \text{generated by}\ H_{w^*}^p[E]\\
M_{w^*,0}^p[E] &= \{ f\in H^p[E] : f{\upharpoonright_{B_{E^*}}}\ \text{is weak*-continuous at}\ 0\},
\end{align*}
where~$B_{E^*}$ denotes the closed unit ball of~$E^*$. 
Note the change of terminology here: what we call $H^1[E]$ was originally denoted~$H_0[E]$ in~\cite{ART}.
As already stated in the Introduction, $H^p[E]$ is a sublattice of~$H[E]$ and a Banach lattice with respect to the norm \mbox{$\|\cdot\|_{\fbl^p[E]}$} defined by~\eqref{Defn:FBLpNorm}. 
 
For $f\in H[E]$, we write
\begin{equation}\label{Eq:uniformnorm} 
    \lVert f{\upharpoonright_{B_{E^*}}}\rVert_\infty=\sup\bigl\{\lvert f(x^*)\rvert : x^*\in B_{E^*}\bigr\}\in[0,\infty]
\end{equation}
for the uniform norm of the restriction of~$f$ to~$B_{E^*}$.
Then \cite[Proposition~2.2]{OTTT} shows that $(\fbl^\infty[E],\|\cdot\|_{\fbl^\infty[E]})$ is isometrically lattice isomorphic to $(H_{w^*}[E],\|\cdot\|_\infty)$, as previously mentioned.
For that reason, we shall restrict our attention to the case $p<\infty$ in the remainder of this paper.   
We begin with some basic general ob\-ser\-va\-tions.

\begin{lem}\label{L5.1}
  Let $E$ be a Banach space, and take $1\le p<\infty$. Then:
\begin{enumerate}[label={\normalfont{(\roman*)}}]
\item\label{L5.1_0} $I[E] = \{ f\in H[E] : \lvert f\rvert\le \sum_{j=1}^n \lvert \delta_{x_j}\rvert\ \text{for some}\ n\in\N\ \text{and}\ x_1,\ldots,x_n\in E\}$\\
\mbox{}$\phantom{I[E]} = \{ f\in H[E] : \lvert f\rvert\le \bigvee_{j=1}^n \lvert \delta_{x_j}\rvert\ \text{for some}\ n\in\N\ \text{and}\ x_1,\ldots,x_n\in E\}$.
\item\label{L5.1ii} $H_{w^*}[E]$ is a sublattice of $H[E]$.
\item\label{L5.1vi'} The norm~\eqref{Defn:FBLpNorm} dominates the norm~\eqref{Eq:uniformnorm}{\normalfont{;}} that is, $\|f\|_{\fbl^p[E]}\ge\lVert f{\upharpoonright_{B_{E^*}}}\rVert_\infty$ for every $f\in H^p[E]$.
\item\label{L5.1vi} $H_{w^*}[E]$ is closed in the norm~$\lVert\,\cdot\,\rVert_\infty$ 
defined by~\eqref{Eq:uniformnorm} in the following precise sense: Suppose that $(f_n)$ is a sequence in $H_{w^*}[E]$ which converges uniformly on~$B_{E^*}$ to a function $f\in H[E]$. Then $f\in H_{w^*}[E]$.
\item\label{L5.1ii'} $H_{w^*}^p[E]$ is a closed sublattice of the Banach lattice $(H^p[E],\lVert\,\cdot\,\rVert_{\fbl^p[E]})$.
\item\label{L5.1iii} $\fbl^p[E]\subseteq H_{w^*}[E]$.
\item\label{lemma:Mw*0i} $I[E]\subseteq J_{w^*}^p[E]$.
\item\label{lemma:Mw*0iii} $\overline{I_{w^*}[E]}\subseteq H_{w^*}^p[E]$.
\item\label{lemma:Mw*0ii} $M_{w^*,0}^p[E]$ is a closed ideal of~$H^p[E]$ containing $\overline{J_{w^*}^p[E]}$.
\end{enumerate}
\end{lem}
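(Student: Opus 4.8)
The plan is to prove the nine items of Lemma~\ref{L5.1} in a logical order, establishing the basic vector lattice and topological facts first and building the chain of inclusions afterwards.

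First I would treat the structural identities \ref{L5.1_0}--\ref{L5.1ii}. For \ref{L5.1_0}, I would recall that the ideal generated by a set~$S$ in a vector lattice consists of those~$f$ dominated in absolute value by a positive multiple of a finite supremum of elements of~$\{\lvert s\rvert : s\in S\}$; since each~$\delta_x$ is linear and hence $\lvert\delta_{\lambda x}\rvert=\lvert\lambda\rvert\,\lvert\delta_x\rvert$, scalar multiples can be absorbed, giving the supremum formulation, and the equivalence of the sum and supremum formulations follows from the pointwise inequalities $\bigvee_{j}\lvert\delta_{x_j}\rvert\le\sum_j\lvert\delta_{x_j}\rvert\le n\bigvee_j\lvert\delta_{x_j}\rvert$. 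For \ref{L5.1ii}, I would simply note that the pointwise maximum and minimum of two functions that are weak*-continuous on~$B_{E^*}$ are again weak*-continuous there, so $H_{w^*}[E]$ is closed under the lattice operations.

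Next come the norm and topological claims \ref{L5.1vi'}--\ref{L5.1ii'}. For \ref{L5.1vi'}, given $x^*\in B_{E^*}$ the singleton $(x^*)$ has weak $p$-summing norm $\sup_{\lVert x\rVert\le1}\lvert x^*(x)\rvert=\lVert x^*\rVert\le1$, so it is an admissible tuple in~\eqref{Defn:FBLpNorm} and hence $\lvert f(x^*)\rvert\le\lVert f\rVert_{\fbl^p[E]}$; taking the supremum over $x^*\in B_{E^*}$ gives the inequality. Claim~\ref{L5.1vi} is a standard uniform-limit argument: a uniform limit on~$B_{E^*}$ of weak*-continuous functions is weak*-continuous there (the weak* topology on~$B_{E^*}$ need not be metrizable, but uniform convergence still preserves continuity), and positive homogeneity passes to the limit pointwise, so $f\in H_{w^*}[E]$. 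Then \ref{L5.1ii'} combines \ref{L5.1ii} with \ref{L5.1vi} and \ref{L5.1vi'}: $H_{w^*}^p[E]=H^p[E]\cap H_{w^*}[E]$ is a sublattice as an intersection of sublattices, and if $(f_n)$ in~$H_{w^*}^p[E]$ converges to~$f$ in $\lVert\,\cdot\,\rVert_{\fbl^p[E]}$, then by~\ref{L5.1vi'} it converges uniformly on~$B_{E^*}$, so $f\in H_{w^*}[E]$ by~\ref{L5.1vi} and $f\in H^p[E]$ by completeness, giving $f\in H_{w^*}^p[E]$.

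Finally I would assemble the inclusions \ref{L5.1iii}--\ref{lemma:Mw*0ii}. For \ref{L5.1iii}, each $\delta_x$ is weak*-continuous (indeed weak*-continuous on all of~$E^*$), so $\{\delta_x\}\subseteq H_{w^*}^p[E]$; since the latter is a closed sublattice of~$H^p[E]$ by~\ref{L5.1ii'}, it contains the closed sublattice~$\fbl^p[E]$ generated by~$\{\delta_x\}$. For \ref{lemma:Mw*0i}, by~\ref{L5.1_0} any $f\in I[E]$ satisfies $\lvert f\rvert\le\sum_j\lvert\delta_{x_j}\rvert\in H_{w^*}^p[E]$, which exhibits~$f$ in the ideal $J_{w^*}^p[E]$ generated by $H_{w^*}^p[E]$. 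For \ref{lemma:Mw*0iii}, $I_{w^*}[E]=I[E]\cap H_{w^*}[E]$ is contained in $H_{w^*}^p[E]$ (its elements are weak*-continuous on~$B_{E^*}$ and, being in~$I[E]$, have finite $\fbl^p$-norm by~\ref{lemma:Mw*0i}), and taking closures in $\lVert\,\cdot\,\rVert_{\fbl^p[E]}$ keeps us inside the closed set $H_{w^*}^p[E]$ by~\ref{L5.1ii'}. The last item~\ref{lemma:Mw*0ii} is the crux. I would first check that $M_{w^*,0}^p[E]$ is an ideal: if $\lvert g\rvert\le\lvert f\rvert$ with $f$ weak*-continuous at~$0$ and $f(0)=0$ (forced by positive homogeneity), then $\lvert g(x^*)\rvert\le\lvert f(x^*)\rvert\to0$ as $x^*\to0$ weak*, so $g$ is continuous at~$0$ as well; closedness under addition is immediate. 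For closedness in norm I would use~\ref{L5.1vi'} again: $\fbl^p$-norm convergence forces uniform convergence on~$B_{E^*}$, and a uniform limit of functions continuous at~$0$ is continuous at~$0$. The main obstacle is showing $\overline{J_{w^*}^p[E]}\subseteq M_{w^*,0}^p[E]$; since $M_{w^*,0}^p[E]$ is norm-closed, it suffices to show $J_{w^*}^p[E]\subseteq M_{w^*,0}^p[E]$, and as this is the ideal generated by $H_{w^*}^p[E]$, I would reduce to showing that every $f\in H^p[E]$ with $\lvert f\rvert\le\lvert h\rvert$ for some $h\in H_{w^*}^p[E]$ is weak*-continuous at~$0$. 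This follows because $h$, being weak*-continuous on~$B_{E^*}$ with $h(0)=0$, is continuous at~$0$, so $\lvert f(x^*)\rvert\le\lvert h(x^*)\rvert\to0$ and hence $f(x^*)\to0=f(0)$ as $x^*\to0$ weak* in~$B_{E^*}$, placing $f$ in $M_{w^*,0}^p[E]$.
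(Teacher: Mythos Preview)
Your proposal is correct and follows essentially the same approach as the paper's proof, item by item. The only noteworthy differences are presentational: in~\ref{lemma:Mw*0i} the paper explicitly computes $\lVert f\rVert_{\fbl^p[E]}\le\sum_k\lVert x_k\rVert$ to place $I[E]$ inside~$H^p[E]$, whereas you implicitly rely on the solidity of~$H^p[E]$ in~$H[E]$ (which is immediate from the definition of the norm but worth stating); and in~\ref{lemma:Mw*0ii} the paper handles the ideal and closedness properties of~$M_{w^*,0}^p[E]$ in one combined $\varepsilon$-argument, while you separate them into ideal, then norm-closed, then $H_{w^*}^p[E]\subseteq M_{w^*,0}^p[E]$ --- your decomposition is arguably cleaner but amounts to the same thing.
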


\begin{proof}  \ref{L5.1_0}. The first equality follows from the fact  that the set on the right-hand side is the smallest solid subspace of~$H[E]$ which contains $\delta_x$ for every $x\in E$.  The second equality is immediate from the inequalities
  \[ \bigvee_{j=1}^n \lvert \delta_{x_j}\rvert\le \sum_{j=1}^n \lvert \delta_{x_j}\rvert\le \bigvee_{j=1}^n \lvert \delta_{nx_j}\rvert\qquad (n\in\N,\, x_1,\ldots,x_n\in E).\]
  
  \ref{L5.1ii}. This is a consequence of the fact that the lattice operations in $H[E]$ are defined pointwise.

 \ref{L5.1vi'}. This is clear because $\lVert x^*\rVert_{p,{\normalfont{\text{weak}}}} = \lVert x^*\rVert$ for each $x^*\in E^*$.  
 
 \ref{L5.1vi}. This follows from the general result that the uniform limit of a sequence of continuous functions defined on a compact Hausdorff space is continuous.

  \ref{L5.1ii'}. Clause~\ref{L5.1ii} implies that $H_{w^*}^p[E]$ is a sublattice of~$H^p[E]$. To see that it is closed,   suppose that $(f_n)$ is a sequence in    $H_{w^*}^p[E]$ which converges to $f\in H^p[E]$ in the norm $\lVert\,\cdot\,\rVert_{\fbl^p[E]}$. Then
    $(f_n)$ converges uniformly on~$B_{E^*}$ to~$f$ by~\ref{L5.1vi'},
 so~\ref{L5.1vi} implies that $f\in H_{w^*}[E]$.

 \ref{L5.1iii}. This is proved for $p=1$ in \cite[Lemma~4.10]{ART}, and the argument given there\-in carries over verbatim to $p>1$. Indeed, \ref{L5.1ii'} implies that $S:=\fbl^p[E]\cap H_{w^*}[E]$ is a closed sublattice of~$(\fbl^p[E],\lVert\,\cdot\,\rVert_{\fbl^p[E]})$. Clearly $\delta_x\in S$ for every $x\in E$, so $S=\fbl^p[E]$; that is, $\fbl^p[E]\subseteq H_{w^*}[E]$.  

\ref{lemma:Mw*0i}. We begin by showing that $I[E]\subseteq H^p[E]$. Take $f\in I[E]$.  By~\ref{L5.1_0}, we can find $n\in\N$ and $x_1,\ldots,x_n\in E$ such that $\lvert f\rvert\le\sum_{k=1}^n\lvert\delta_{x_k}\rvert$. Suppose that $m\in\N$ and $x_1^*,\ldots,x^*_m\in E^*$ with $\lVert (x_j^*)_{j=1}^m\rVert_{p,{\normalfont{\text{weak}}}}\le 1$. Then $\sum_{j=1}^m \lvert x_j^*(x_k)\rvert^p\le \lVert x_k\rVert^p$ for each $k\in\{1,\ldots,n\}$,  and consequently
 \begin{align*} \biggl(\sum_{j=1}^m \lvert f(x_j^*)\rvert^p\biggr)^{\frac1p}\le \biggl(\sum_{j=1}^m\Bigl(\sum_{k=1}^n \lvert x_j^*(x_k)\rvert\Bigr)^p\biggr)^{\frac1p}\le \sum_{k=1}^n\biggl(\sum_{j=1}^m \lvert x_j^*(x_k)\rvert^p\biggr)^{\frac1p}\le \sum_{k=1}^n \lVert x_k\rVert, \end{align*}
 where the second inequality is simply the statement that the norm of the sum of~$n$ vectors in~$\ell_p^m$ is dominated by the sum of their norms. 
 We conclude that $\lVert f\rVert_{\fbl^p[E]}\le \sum_{k=1}^n\lVert x_k\rVert<\infty$, and therefore $f\in H^p[E]$, as desired.
 
This implies that~$I[E]$ is the ideal of~$H^p[E]$ generated by~$\delta_x$ for $x\in E$. These generators belong to $H_{w^*}^p[E]$ (as we already observed in~\ref{L5.1iii} above), and hence $I[E]\subseteq J_{w^*}^p[E]$.

\ref{lemma:Mw*0iii}. We have just seen that $I[E]\subseteq H^p[E]$, so $I_{w^*}[E]\subseteq H_{w^*}^p[E]$. Now the con\-clu\-sion follows from the fact that $H_{w^*}^p[E]$ is closed by~\ref{L5.1ii'}.  
 
\ref{lemma:Mw*0ii}. Suppose that $f$  belongs to the closure of the ideal of~$H^p[E]$ generated by~$M_{w^*,0}^p[E]$, and take a weak*-null net $(x^*_\alpha)$ in $B_{E^*}$. The set $M_{w^*,0}^p[E]$ is a sub\-lattice of $H[E]$ because the vector lattice operations are defined pointwise. Therefore, for each $\varepsilon>0$, we can find $g\in H^p[E]$ and $h\in M_{w^*,0}^p[E]$ such that 
$\lVert f-g\rVert_{\fbl^p[E]}\le\varepsilon$ and $\lvert g\rvert\le \lvert h\rvert$. Since $h(0)=0$ by positive homogeneity, we can choose $\alpha_0$ such that $\lvert h(x_\alpha^*)\rvert\le \varepsilon$ for every $\alpha\ge \alpha_0$. Then we have 
\begin{align*}
   \lvert f(x^*_\alpha)-f(0)\rvert = \lvert f(x^*_\alpha)\rvert&\le \lvert f(x_\alpha^*)-g(x^*_\alpha)\rvert + \lvert g(x_\alpha^*)\rvert\\
   &\le   \lVert f-g\rVert_{\fbl^p[E]} + \lvert h(x_\alpha^*)\rvert\le 2\varepsilon\qquad (\alpha\ge \alpha_0), 
\end{align*} 
which shows that $f\in M_{w^*,0}^p[E]$, and therefore $M_{w^*,0}^p[E]$ is a closed ideal of~$H^p[E]$. 

This implies that  $\overline{J_{w^*}^p[E]}\subseteq M_{w^*,0}^p[E]$ because $H_{w^*}^p[E]\subseteq M_{w^*,0}^p[E]$.
 \end{proof}

The following two chains of inclusions summarize the relationship among the sets defined above:
\begin{align}\label{E:inclusions1}
\operatorname{lat}\delta[E]&\subseteq \fbl^p[E]\subseteq \overline{I_{w^*}[E]}\subseteq H_{w^*}^p[E]\notag\\ &\subseteq J_{w^*}^p[E]\subseteq \overline{J_{w^*}^p[E]}\subseteq M_{w^*,0}^p[E]\subseteq H^p[E],
\end{align}
where~$\overline{S}$ denotes the closure of a set~$S$ with respect to the norm~$\lVert\,\cdot\,\rVert_{\fbl^p[E]}$, and 
\begin{equation}\label{E:inclusions2} \operatorname{lat}\delta[E]\subseteq I_{w^*}[E]\subseteq I[E]\subseteq J_{w^*}^p[E]. \end{equation}
In the next three sections we shall examine when two (or more) of these lattices are equal. 

\section{Characterizations of finite-dimensionality}\label{S:3}
\noindent
In this section we study the chains of lattices~\eqref{E:inclusions1} and~\eqref{E:inclusions2} when the Banach space~$E$ is finite-dimensional. It turns out that in this case the lattices will effec\-tive\-ly reduce  to two options, either~$C(S_{E^*})$ or~$\ell_\infty(S_{E^*})$, where $S_{E^*} = \{ x^*\in E^* : \lVert x^*\rVert =1\}$ denotes the dual unit sphere; see Proposition~\ref{prop:findimcase} and Corollary~\ref{Cor:3.5} for details. 

We begin by showing that in dimensions~$0$ and~$1$, even more is true: all the lattices defined on page~\pageref{Eq:latticelist} are equal, and this happens only in these two dimensions. Perhaps more surprisingly, this is equivalent to the second inclusion $I_{w^*}[E]\subseteq I[E]$ in~\eqref{E:inclusions2} being an equality. 

\begin{lem}\label{L5.2}
  Let $E$ be a Banach space, and take $1\le p<\infty$. Then the following conditions are equivalent:
\begin{enumerate}[label={\normalfont{(\alph*)}}]
\item\label{L5.2c} $\dim E\le 1;$
\item\label{L5.2a} $\operatorname{lat}\delta[E] = H[E];$
\item\label{L5.2e} $H^p[E]\subseteq H_{w^*}[E];$  
\item\label{L5.2d} $H_{w^*}^p[E] = J_{w^*}^p[E];$  
\item\label{L5.2b} $I_{w^*}[E] = I[E]$.
\end{enumerate}
\end{lem}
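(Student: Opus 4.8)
The plan is to prove the five conditions equivalent through the scheme (a)$\Rightarrow$(b)$\Rightarrow$(c)$\Rightarrow$(d), together with (b)$\Rightarrow$(e), and then to close all equivalences by a single construction that establishes both (d)$\Rightarrow$(a) and (e)$\Rightarrow$(a) in contrapositive form. Once the implications (a)$\Rightarrow$(b)$\Rightarrow$(c)$\Rightarrow$(d) and (b)$\Rightarrow$(e) are in place, it suffices to show that the negation of~(a), namely $\dim E\ge 2$, forces the failure of both~(d) and~(e); the chains (a)$\Rightarrow$(b)$\Rightarrow$(c)$\Rightarrow$(d)$\Rightarrow$(a) and (a)$\Rightarrow$(b)$\Rightarrow$(e)$\Rightarrow$(a) then yield all five equivalences.

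For the forward part, the implication (a)$\Rightarrow$(b) is handled by treating $\dim E=0$ (where $E^*=\{0\}$ and positive homogeneity forces $H[E]=\{0\}$) and $\dim E=1$ separately; in the latter case $H[E]$ is the two-dimensional space of positively homogeneous functions on~$\R$, spanned by $\delta_1$ and $\lvert\delta_1\rvert=\delta_1\vee(-\delta_1)$, both of which lie in the (vector) sublattice $\operatorname{lat}\delta[E]$. For (b)$\Rightarrow$(c) and (b)$\Rightarrow$(e), I would observe that~(b) together with the inclusions~\eqref{E:inclusions1} forces $H[E]=\operatorname{lat}\delta[E]\subseteq\fbl^p[E]\subseteq H^p[E]\subseteq H[E]$, whence $H^p[E]=\fbl^p[E]$, and then Lemma~\ref{L5.1}\ref{L5.1iii} gives $H^p[E]=\fbl^p[E]\subseteq H_{w^*}[E]$, which is~(c) and, restricting to $I[E]\subseteq H[E]\subseteq H_{w^*}[E]$, also~(e). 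Finally (c)$\Rightarrow$(d) is immediate: under~(c) we have $H_{w^*}^p[E]=H^p[E]\cap H_{w^*}[E]=H^p[E]$, and the ideal this generates in $H^p[E]$ is $H^p[E]=J_{w^*}^p[E]$.

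The crux is the construction. Assuming $\dim E\ge2$, I will produce a single $f\in I[E]\setminus H_{w^*}[E]$; since $I[E]\subseteq H^p[E]$ and $I[E]\subseteq J_{w^*}^p[E]$ by Lemma~\ref{L5.1}\ref{lemma:Mw*0i}, such an~$f$ simultaneously witnesses $I_{w^*}[E]\subsetneq I[E]$ (negating~(e)) and $f\in J_{w^*}^p[E]\setminus H_{w^*}^p[E]$ (negating~(d)). Choose linearly independent $x_1,x_2\in E$ and set $f(x^*)=x^*(x_1)$ if $x^*(x_2)>0$ and $f(x^*)=0$ otherwise. Then $f\in H[E]$ and $\lvert f\rvert\le\lvert\delta_{x_1}\rvert$, so $f\in I[E]$ by Lemma~\ref{L5.1}\ref{L5.1_0}. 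Using Hahn--Banach, pick $x_0^*\in B_{E^*}$ with $x_0^*(x_1)=s_0\ne0$ and $x_0^*(x_2)=0$ (taking $s_0$ small enough to keep $x_0^*\in B_{E^*}$), so that $f(x_0^*)=0$. To see that $f$ is weak*-discontinuous at $x_0^*$ on $B_{E^*}$, it remains to verify the key point that $x_0^*$ lies in the weak*-closure of $C:=\{x^*\in B_{E^*}:x^*(x_2)>0\}$: granting this, any net $(x_\alpha^*)$ in~$C$ with $x_\alpha^*\to x_0^*$ weak* satisfies $x_\alpha^*(x_1)\to s_0$, whence $f(x_\alpha^*)=x_\alpha^*(x_1)\to s_0\ne0=f(x_0^*)$.

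The verification of this key point is where the real work lies, because $x_0^*$ is typically \emph{not} in the weak*-interior of $B_{E^*}$, so one cannot merely perturb $x_0^*$. I would show that $\overline{C}^{\,w^*}=\{x^*\in B_{E^*}:x^*(x_2)\ge0\}$: the inclusion $\subseteq$ is clear, and for $\supseteq$ one fixes $z^*\in B_{E^*}$ with $z^*(x_2)>0$ and notes that every $x^*\in B_{E^*}$ with $x^*(x_2)=0$ is the norm-limit, as $\lambda\to0^+$, of the convex combinations $(1-\lambda)x^*+\lambda z^*$, which lie in~$C$. Since $x_0^*$ belongs to this closed set, it belongs to $\overline{C}^{\,w^*}$. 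The main obstacle is precisely this passage from a discontinuity that is visible in the finite-dimensional ``shadow'' $(x^*(x_1),x^*(x_2))$ to a genuine weak*-discontinuity realized by a net lying inside $B_{E^*}$; the convexity argument above is exactly what bridges the two.
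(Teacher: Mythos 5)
Your proof is correct and follows essentially the same strategy as the paper: the forward implications are handled by the same elementary observations, and the key step is again a positively homogeneous function dominated by some $\lvert\delta_{x}\rvert$ whose weak* discontinuity on $B_{E^*}$ is witnessed by a norm-convergent (hence weak*-convergent) sequence approaching a boundary point of its support. The only differences are cosmetic: the paper closes a single cycle (a)$\Rightarrow$(b)$\Rightarrow$(c)$\Rightarrow$(d)$\Rightarrow$(e)$\Rightarrow$(a) and uses a counterexample supported on a ray rather than an open half-space, whereas you run two cycles off the same construction.
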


\begin{proof}
  \ref{L5.2c}$\Rightarrow$\ref{L5.2a}. We have $\operatorname{lat}\delta[E] = H[E]$ for $E=\{0\}$ because $H[E] =\{0\}$ in this case.

  Now suppose that $E=\R$, in which case $E^*=\R$, and the duality is given by $\delta_x(y)=\langle x,y\rangle = xy$ for $x,y\in\R$. Take $f\in H[\R]$, and suppose that $f(1)\ge -f(-1)$. Then, for each $y\in\R$, we have 
\begin{align*}
     (\delta_{f(1)}\vee \delta_{-f(-1)})(y) &= \delta_{f(1)}(y)\vee \delta_{-f(-1)}(y) = f(1)y\vee (-f(-1)y)\\ &= \left\{ \begin{array}{ll} f(1)y &\text{for}\ y\ge 0\\ -f(-1)y &\text{for}\ y<0\end{array}\right\} = f(y) 
\end{align*}
by positive homogeneity. Hence $f=\delta_{f(1)}\vee \delta_{-f(-1)}$. A similar argument shows that  $f=\delta_{f(1)}\wedge \delta_{-f(-1)}$ when $f(1)\le -f(-1)$, and so $f\in\operatorname{lat}\delta[\R]$ in both cases. 

  We have \ref{L5.2a}$\Rightarrow$\ref{L5.2e} because the inclusions $H^p[E]\subseteq H[E]$ and $\operatorname{lat}\delta[E]\subseteq H_{w^*}[E]$ hold true in general. 

\ref{L5.2e}$\Rightarrow$\ref{L5.2d}. Suppose that $H^p[E]\subseteq H_{w^*}[E]$. Then we have $H_{w^*}^p[E] = H^p[E]$, from which~\ref{L5.2d} follows. 

\ref{L5.2d}$\Rightarrow$\ref{L5.2b}. Suppose that $H_{w^*}^p[E] = J_{w^*}^p[E]$. Then 
Lemma~\ref{L5.1}\ref{lemma:Mw*0i} implies that $I[E]\subseteq H_{w^*}^p[E]\subseteq H_{w^*}[E]$.

Finally, we prove that \ref{L5.2b}$\Rightarrow$\ref{L5.2c} by contraposition. Suppose that $\dim E\ge 2$, and  choose two linearly independent functionals~$x^*_0,y^*_0\in E^*$, each having norm~$\frac12$. Then $(x_0^* + \frac1n y_0^*)_{n\in\N}$ is a sequence in~$B_{E^*}$ which converges to~$x_0^*$ in norm and therefore also in the weak*-topology.
  Define $f\colon E^*\to\R$ by
  \[ f(x^*) = \begin{cases} t\ &\text{if}\ x^* = tx_0^*\ \text{for some}\ t>0,\\ 0\ &\text{otherwise.} \end{cases} \]
  It is easy to see that $f$ is positively homogeneous. Choose $x_0\in E$ such that $x_0^*(x_0)=1$. Then
    $\lvert f\rvert\le \lvert\delta_{x_0}\rvert$, so $f\in I[E]$. On the other hand, $f(x_0^* + \frac1n y_0^*)=0$ for each $n\in\N$ by definition, but $f(x_0^*) =1$, so $f{\upharpoonright_{B_{E^*}}}$ is discontinuous at~$x_0^*$, and therefore $f\notin H_{w^*}[E]$.
\end{proof}

 Let $E$ be a non-zero Banach space. 
 The restriction mapping $R\colon f\mapsto f{\upharpoonright_{S_{E^*}}}$ defines a lattice isomorphism of~$H[E]$ onto the vector lattice of functions $S_{E^*}\to\R$ because every function $S_{E^*}\to\R$ extends uniquely to a positively homogeneous func\-tion~$E^*\to\R$. When $E$ is finite-dimensional, we can use restrictions of this lattice isomorphism~$R$ to identify the Banach lattices $\fbl^p[E]$ and $H^p[E]$ explicitly. 

\begin{prop}\label{prop:findimcase} Let $E$ be a non-zero, finite-dimensional Banach space. Then, for every $1\le p<\infty$,  
\[ R(\fbl^p[E]) = R(H_{w^*}[E]) = C(S_{E^*})\quad\text{and}\quad R(J_{w^*}^p[E]) = R(H^p[E]) = \ell_\infty(S_{E^*}), \] 
where $\ell_\infty(S_{E^*})$ denotes the Banach lattice of bounded functions $S_{E^*}\to\R$.
\end{prop}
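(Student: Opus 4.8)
The plan is to exploit the restriction isomorphism~$R$ together with two features special to the finite-dimensional setting: the weak* topology on~$E^*$ coincides with the norm topology, and the identity operator on~$E^*$ is $p$-summing. The crucial preliminary step, which I expect to carry the most weight, is to show that on~$H^p[E]$ the norm $\lVert\,\cdot\,\rVert_{\fbl^p[E]}$ is equivalent to the uniform norm $\lVert R(\,\cdot\,)\rVert_\infty$. The lower bound $\lVert R(f)\rVert_\infty = \lVert f{\upharpoonright_{B_{E^*}}}\rVert_\infty \le \lVert f\rVert_{\fbl^p[E]}$ is immediate from positive homogeneity and Lemma~\ref{L5.1}\ref{L5.1vi'}. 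For the upper bound I would use that $E^*$ is finite-dimensional, so there is a constant $C = C(E,p)$ with $(\sum_{j}\lVert x_j^*\rVert^p)^{1/p} \le C\,\lVert (x_j^*)_j\rVert_{p,{\normalfont{\text{weak}}}}$ for every finite tuple in~$E^*$; concretely $C$ can be produced from a finite $\frac12$-net of the compact sphere $S_E$, or one may simply invoke that the identity on a finite-dimensional space is $p$-summing. Since $\lvert f(x_j^*)\rvert \le \lVert x_j^*\rVert\,\lVert R(f)\rVert_\infty$ by homogeneity, substituting into~\eqref{Defn:FBLpNorm} gives $\lVert f\rVert_{\fbl^p[E]} \le C\,\lVert R(f)\rVert_\infty$. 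Consequently $R$ restricts to a bicontinuous lattice isomorphism of $(H^p[E],\lVert\,\cdot\,\rVert_{\fbl^p[E]})$ onto $(\ell_\infty(S_{E^*}),\lVert\,\cdot\,\rVert_\infty)$; in particular $R(H^p[E]) = \ell_\infty(S_{E^*})$, and $R$ carries $\lVert\,\cdot\,\rVert_{\fbl^p[E]}$-closures to $\lVert\,\cdot\,\rVert_\infty$-closures.

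Next I would identify $R(H_{w^*}[E])$. Because $E^*$ is finite-dimensional the weak* and norm topologies coincide, so $f\in H_{w^*}[E]$ if and only if $f{\upharpoonright_{B_{E^*}}}$ is norm-continuous; for positively homogeneous~$f$ this holds precisely when $f{\upharpoonright_{S_{E^*}}}$ is continuous, continuity at~$0$ being automatic from $\lvert f(x^*)\rvert\le \lVert x^*\rVert\,\lVert R(f)\rVert_\infty$. Hence $R(H_{w^*}[E]) = C(S_{E^*})$, and intersecting with the previous paragraph gives $R(H_{w^*}^p[E]) = \ell_\infty(S_{E^*})\cap C(S_{E^*}) = C(S_{E^*})$, using that $R$ is a bijection and so commutes with intersections.

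For $\fbl^p[E]$, recall it is the $\lVert\,\cdot\,\rVert_{\fbl^p[E]}$-closure of $\operatorname{lat}\delta[E]$, so by the first paragraph $R(\fbl^p[E])$ equals the $\lVert\,\cdot\,\rVert_\infty$-closure of $R(\operatorname{lat}\delta[E])$ in $\ell_\infty(S_{E^*})$. Now $R(\operatorname{lat}\delta[E])$ is the sublattice of $C(S_{E^*})$ generated by the restrictions to~$S_{E^*}$ of the functionals $x^*\mapsto x^*(x)$ on~$E^*$; as $x$ runs through $E = E^{**}$ these exhaust all linear functionals on~$E^*$. I would verify that this sublattice separates the points of~$S_{E^*}$ in the strong two-point sense: given distinct $s,t$ and reals $\alpha,\beta$, a single functional suffices when $s,t$ are linearly independent, while if $t=-s$ one combines two multiples of a fixed functional using $\vee$ (when $\alpha+\beta\ge0$) or $\wedge$ (when $\alpha+\beta\le0$) to attain the values $(\alpha,\beta)$. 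The lattice form of the Stone--Weierstrass theorem then yields that this uniform closure is all of $C(S_{E^*})$, which is $\lVert\,\cdot\,\rVert_\infty$-closed in $\ell_\infty(S_{E^*})$ because $S_{E^*}$ is compact. Thus $R(\fbl^p[E]) = C(S_{E^*}) = R(H_{w^*}[E])$.

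Finally, $J_{w^*}^p[E]$ is the ideal of $H^p[E]$ generated by $H_{w^*}^p[E]$, so the lattice isomorphism~$R$ maps it onto the ideal of $\ell_\infty(S_{E^*})$ generated by $C(S_{E^*})$. Since the constant function~$1$ lies in $C(S_{E^*})$ and dominates every bounded function, this ideal is all of $\ell_\infty(S_{E^*})$, giving $R(J_{w^*}^p[E]) = \ell_\infty(S_{E^*}) = R(H^p[E])$. The one genuinely substantial step is the norm equivalence of the first paragraph: it is precisely what allows the $\fbl^p$-norm closure defining $\fbl^p[E]$ to be replaced by a uniform closure, so that the lattice Stone--Weierstrass theorem becomes applicable; the remaining identifications are then formal consequences of $R$ being a lattice isomorphism onto the full space of functions on~$S_{E^*}$.
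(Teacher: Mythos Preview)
Your proof is correct and follows a somewhat different organizational route from the paper's. The paper does not establish the norm equivalence $\lVert\,\cdot\,\rVert_{\fbl^p[E]}\sim\lVert R(\,\cdot\,)\rVert_\infty$ as a preliminary step; instead it first proves $C(S_{E^*})\subseteq R(\fbl^p[E])$ by citing the Stone--Weierstrass argument from \cite[Proposition~5.3]{dePW}, and then, for the right-hand identity, exploits that the norm function $\nu(x^*)=\lVert x^*\rVert$ lies in $\fbl^p[E]\subseteq H_{w^*}^p[E]$ (a consequence of the first part), so that any bounded positively homogeneous~$f$ satisfies $\lvert f\rvert\le\lVert Rf\rVert_\infty\,\nu$ and hence lies in~$J_{w^*}^p[E]$. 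Your approach via the $p$-summing property of the identity on~$E^*$ is more self-contained (no external citations), yields the norm equivalence explicitly---which the paper only mentions afterwards in a remark---and makes the passage from $\fbl^p$-closure to uniform closure transparent; the paper's route is shorter on the page but leans on \cite{dePW} and uses the clever device of the norm function in place of a direct finiteness estimate for $\lVert f\rVert_{\fbl^p[E]}$.
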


\begin{proof} Lemma~\ref{L5.1}\ref{L5.1iii} implies that  
$R(\fbl^p[E])\subseteq R(H_{w^*}[E])$, and the inclusion $R(H_{w^*}[E])\subseteq C(S_{E^*})$ holds because the weak* and norm topologies on~$E^*$ coincide. Therefore, to prove the left-hand identity, it only remains to verify that $C(S_{E^*})\subseteq R(\fbl^p[E])$, which can be done by arguing as in the proof of \cite[Proposition~5.3]{dePW}; see also  \cite[Corollary~2.9(iii)]{ART} for the connection between  de~Pagter and Wickstead's notion of the free Banach lattice over a set studied in~\cite{dePW} and  the free Banach lattice over a Banach space. 

To prove the right-hand identity, we begin by observing that $R(J_{w^*}^p[E])\subseteq R(H^p[E])$ because $J_{w^*}^p[E]\subseteq H^p[E]$ by definition, and $R(H^p[E])\subseteq\ell_\infty(S_{E^*})$ because $\lvert f(x^*)\rvert\le \lVert f\rVert_{\fbl^p[E]}$ for every $x^*\in B_{E^*}$ and $f\in H^p[E]$. Finally, to verify that $\ell_\infty(S_{E^*})\subseteq R(J_{w^*}^p[E])$, take $f_0\in \ell_\infty(S_{E^*})$. In view of the remarks we made before the statement of the result, we can find $f\in H[E]$ such that $Rf = f_0$. Let $\nu\in H[E]$ be the ``norm function'' defined by $\nu(x^*) = \lVert x^*\rVert$ for every $x^*\in E^*$. Then $R\nu$ is the constant function 1, which obviously belongs to~$C(S_{E^*})$, so $\nu\in\fbl^p[E]\subseteq H_{w^*}^p[E]$ by the first part of the proof and Lemma~\ref{L5.1}\ref{L5.1iii}. Since~$f$ is positively homogeneous, we have $\lvert f\rvert\le \lVert f{\upharpoonright_{S_{E^*}}}\rVert_\infty\, \nu$, which shows that $f\in J_{w^*}^p[E]$, and the conclusion follows. 
\end{proof}

\begin{rem}
Note that the identities in Proposition \ref{prop:findimcase} are up to equivalence of norms. In fact, one can check the constants in this equivalence increase as the dimension of the underlying Banach space $E$ does.    
\end{rem}

The following general observation will be useful for us. 

\begin{lem}\label{Lemma:supinHw*}
  Let $(x_n)$ be a sequence in a Banach space~$E$ with $\|x_n\|\underset{n\to\infty}\longrightarrow0$. Then \[ \bigvee_{n\in\N} \lvert\delta_{x_n}\rvert\in H_{w^*}[E]. \]
\end{lem}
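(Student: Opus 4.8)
The plan is to realize $g:=\bigvee_{n\in\N}\lvert\delta_{x_n}\rvert$ as the uniform limit on $B_{E^*}$ of a sequence of functions from $H_{w^*}[E]$, and then to invoke Lemma~\ref{L5.1}\ref{L5.1vi}. First I would record the pointwise description $g(x^*)=\sup_{n\in\N}\lvert x^*(x_n)\rvert$. For each fixed $x^*\in E^*$ we have $\lvert x^*(x_n)\rvert\le\lVert x^*\rVert\,\lVert x_n\rVert\to0$, so this supremum is finite; positive homogeneity is immediate from that of each $\delta_{x_n}$, and hence $g\in H[E]$, which is the hypothesis needed to apply Lemma~\ref{L5.1}\ref{L5.1vi}.

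Next, for $N\in\N$ I would set $g_N:=\bigvee_{n=1}^N\lvert\delta_{x_n}\rvert$. Each $\delta_{x_n}$ is weak*-continuous on $E^*$ (being evaluation at $x_n\in E$) and positively homogeneous, so $\lvert\delta_{x_n}\rvert\in H_{w^*}[E]$, and since $H_{w^*}[E]$ is a sublattice by Lemma~\ref{L5.1}\ref{L5.1ii}, the finite supremum $g_N$ again lies in $H_{w^*}[E]$.

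The key step is the uniform convergence $g_N\to g$ on $B_{E^*}$. For $x^*\in B_{E^*}$ one has $g(x^*)=\max\bigl(g_N(x^*),\,\sup_{n>N}\lvert x^*(x_n)\rvert\bigr)$, so that $0\le g(x^*)-g_N(x^*)\le\sup_{n>N}\lvert x^*(x_n)\rvert\le\sup_{n>N}\lVert x_n\rVert$. Taking the supremum over $x^*\in B_{E^*}$ gives $\lVert (g-g_N){\upharpoonright_{B_{E^*}}}\rVert_\infty\le\sup_{n>N}\lVert x_n\rVert$, which tends to $0$ as $N\to\infty$ precisely because $\lVert x_n\rVert\to0$. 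Applying Lemma~\ref{L5.1}\ref{L5.1vi} to the sequence $(g_N)$ then yields $g\in H_{w^*}[E]$.

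I expect the only real obstacle to be conceptual rather than computational: a pointwise supremum of weak*-continuous functions is in general merely lower semicontinuous, so the crux is to upgrade pointwise suprema to a \emph{uniform} limit of finite suprema. This is exactly where the hypothesis $\lVert x_n\rVert\to0$ enters, forcing the tail $\sup_{n>N}\lVert x_n\rVert$ to vanish and thereby bounding the tail of the supremum uniformly over $B_{E^*}$; everything else is routine.
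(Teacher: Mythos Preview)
Your proposal is correct and follows essentially the same approach as the paper: approximate the supremum by the finite suprema $g_N=\bigvee_{n=1}^N\lvert\delta_{x_n}\rvert\in H_{w^*}[E]$, bound $\lVert(g-g_N){\upharpoonright_{B_{E^*}}}\rVert_\infty$ by $\sup_{n>N}\lVert x_n\rVert$, and apply Lemma~\ref{L5.1}\ref{L5.1vi}. If anything, you are slightly more explicit than the paper in verifying that $g\in H[E]$ and that each $g_N\in H_{w^*}[E]$.
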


\begin{proof}
Set $f = \bigvee_{n\in\N} \lvert\delta_{x_n}\rvert$, which is clearly a positively homogeneous function, and consider the functions $f_m =  \bigvee_{n=1}^m \lvert\delta_{x_n}\rvert\in H_{w^*}[E]$ for $m\in\N$. They satisfy
\begin{align*} \lVert (f-f_m){\upharpoonright_{B_{E^*}}}\rVert_\infty &= \sup_{x^*\in B_{E^*}} \biggl|  \bigvee_{n=1}^\infty\lvert x^*(x_n)\rvert - \bigvee_{n=1}^m\lvert x^*(x_n)\rvert\biggr|\le \sup_{x^*\in B_{E^*}} \bigvee_{n=m+1}^\infty \lvert x^*(x_n)\rvert\\  &= \bigvee_{n=m+1}^\infty \lVert x_n\rVert\underset{m\to\infty}\longrightarrow 0, \end{align*}
so $f\in H_{w^*}[E]$ by Lemma~\ref{L5.1}\ref{L5.1vi}. 
\end{proof}

\begin{thm}\label{L5.3}
  Let $E$ be a Banach space, and take $1\le p<\infty$.  Then the following conditions are equivalent:
  \begin{enumerate}[label={\normalfont{(\alph*)}}]
  \item\label{L5.3a} $E$ is finite-dimensional;
  \item\label{L5.3b} $\fbl^p[E] = H_{w^*}[E];$
  \item\label{L5.3c} $H_{w^*}[E]\subseteq H^p[E];$
  \item\label{L5.3h}  $I[E] = H^p[E];$
  \item\label{L5.3j}  $I[E] = J_{w^*}^p[E];$
  \item\label{L5.3i} $I[E]$ is closed in~$H^p[E];$
  \item\label{L5.3g}  $\fbl^p[E]\subseteq I[E];$
  \item\label{L5.3d}  $I_{w^*}[E] = H_{w^*}[E];$
  \item\label{L5.3e} $I_{w^*}[E]$ is closed in~$H^p[E];$
  \item\label{L5.3f}  $\fbl^p[E]\subseteq I_{w^*}[E]$.
\end{enumerate}
\end{thm}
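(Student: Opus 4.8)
The plan is to prove Theorem~\ref{L5.3} by establishing a cycle of implications among the ten conditions, with condition~\ref{L5.3a} ($E$ finite-dimensional) serving as the hub. The natural strategy is to show that \ref{L5.3a} implies everything cheaply, and that the negation of \ref{L5.3a} (i.e.\ $E$ infinite-dimensional) contradicts each of the remaining conditions by producing a single pathological function. First I would dispose of the finite-dimensional case: if $E$ is finite-dimensional, then Proposition~\ref{prop:findimcase} already gives $R(\fbl^p[E]) = R(H_{w^*}[E]) = C(S_{E^*})$ and $R(J_{w^*}^p[E]) = R(H^p[E]) = \ell_\infty(S_{E^*})$. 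Since $R$ is a lattice isomorphism, \ref{L5.3b} follows immediately. For the conditions involving $I[E]$, I would note that in finite dimensions every bounded positively homogeneous function is dominated by $\|f{\upharpoonright_{S_{E^*}}}\|_\infty\,\nu \le \|f{\upharpoonright_{S_{E^*}}}\|_\infty \bigvee_{j} |\delta_{x_j}|$ for a basis $x_1,\dots,x_n$ spanning $E$ with $\nu \le \sum_j|\delta_{x_j}|$, so $I[E] = H^p[E]$, giving \ref{L5.3h}; the remaining finite-dimensional implications \ref{L5.3j}, \ref{L5.3i}, \ref{L5.3g}, \ref{L5.3d}, \ref{L5.3e}, \ref{L5.3f} then follow from \ref{L5.3b}, \ref{L5.3h}, and the inclusion chains~\eqref{E:inclusions1} and~\eqref{E:inclusions2}.

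The core of the argument is the converse direction, and here the key device is to build, from an infinite-dimensional $E$, a function that witnesses the failure of each condition. The natural candidate is $g = \bigvee_{n\in\N}|\delta_{x_n}|$ for a suitable sequence $(x_n)$ in $E$. By Lemma~\ref{Lemma:supinHw*}, if $\|x_n\|\to 0$ then $g\in H_{w^*}[E]$; and by choosing the $x_n$ to be, say, a normalized basic sequence scaled appropriately, I can arrange that $g\notin H^p[E]$ (because the supremum over weakly $p$-summing tuples diverges) while still lying in $H_{w^*}[E]$. This single function would simultaneously refute \ref{L5.3c} (it sits in $H_{w^*}[E]\setminus H^p[E]$) and, by restricting to the case $\|x_n\|\to 0$, separate $H_{w^*}[E]$ from $\fbl^p[E]$. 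For the conditions asserting that $I[E]$ or $I_{w^*}[E]$ equals or is closed inside a larger lattice, I would exhibit a function in $\fbl^p[E]$ (or in $H_{w^*}[E]$) that fails to be dominated by any finite sup $\bigvee_{j=1}^n|\delta_{x_j}|$; the function $g$ with $\|x_n\|\to 0$, which lies in $H_{w^*}^p[E]\subseteq\fbl^p[E]$-closure but whose ``infinitely many directions of support'' prevent finite domination, is precisely such a witness, showing $\fbl^p[E]\not\subseteq I[E]$ and hence refuting \ref{L5.3g}, \ref{L5.3f}, and the equalities and closedness claims.

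The most efficient packaging is probably \emph{not} a single long cycle but rather to prove \ref{L5.3a}$\Rightarrow$(all others) and then (each other)$\Rightarrow$\ref{L5.3a} via the contrapositive using the common counterexample $g$; several of the ``each other $\Rightarrow$ \ref{L5.3a}'' steps can be chained through the inclusions in~\eqref{E:inclusions1} and~\eqref{E:inclusions2} so that only one or two genuinely need the explicit construction. For instance, \ref{L5.3f}$\Rightarrow$\ref{L5.3g} and \ref{L5.3d}$\Rightarrow$\ref{L5.3e} are formal consequences of the inclusion chains, so it suffices to contradict, say, $\fbl^p[E]\subseteq I[E]$ directly. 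I expect the main obstacle to be the careful verification that the witnessing function $g = \bigvee_{n}|\delta_{x_n}|$ genuinely fails finite domination: one must show that $|g|\le \bigvee_{j=1}^n|\delta_{y_j}|$ for finitely many $y_1,\dots,y_n\in E$ is impossible, which requires choosing $(x_n)$ spanning ``independent enough'' directions (e.g.\ via a biorthogonal system or a basic sequence) and evaluating at well-chosen functionals $x^*\in E^*$ that see $x_n$ but annihilate $y_1,\dots,y_n$—this is where the infinite-dimensionality is essentially used, typically through a Hahn--Banach or Josefson--Nissenzweig type selection. Balancing the two requirements—$\|x_n\|\to 0$ so that $g\in H_{w^*}[E]$, yet the directions remaining rich enough to defeat finite domination and (for the $H^p$ separation) to escape $H^p[E]$—is the delicate point that the proof must handle with an explicit, quantitative choice of the sequence.
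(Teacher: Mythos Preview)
Your overall architecture---prove \ref{L5.3a}$\Rightarrow$(everything) via Proposition~\ref{prop:findimcase} and a basis argument, then chain the remaining conditions through~\eqref{E:inclusions1}--\eqref{E:inclusions2} so that only one or two contrapositives need explicit witnesses---matches the paper. The gap is in your claim that a \emph{single} function $g=\bigvee_n|\delta_{x_n}|$ handles both contrapositives. To refute~\ref{L5.3c} you need $g\in H_{w^*}[E]\setminus H^p[E]$; to refute~\ref{L5.3g} you need $g\in\fbl^p[E]\setminus I[E]$, which forces $g\in H^p[E]$. These requirements are contradictory, so no single $g$ can serve. The paper accordingly builds two different functions. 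For $\neg$\ref{L5.3a}$\Rightarrow\neg$\ref{L5.3c} it uses the weak Dvoretzky--Rogers theorem (which you do not mention but is essential) to obtain a weakly $p$-summable sequence $(x_n^*)\subset E^*$ with $\sum\|x_n^*\|^p=\infty$, picks near-norming $x_n\in B_E$ and scalars $s_n\downarrow0$ with $\sum s_n^p\|x_n^*\|^p=\infty$, and takes $f=\bigvee_n|\delta_{s_nx_n}|\in H_{w^*}[E]\setminus H^p[E]$. For $\neg$\ref{L5.3a}$\Rightarrow\neg$\ref{L5.3g} it instead uses the absolutely convergent series $f=\sum_n 2^{-n}|\delta_{b_n}|$, with $(b_n)$ a normalized basic sequence, which does lie in $\fbl^p[E]$; any proposed bound $|f|\le\sum_{j=1}^m|\delta_{y_j}|$ is defeated by solving $m$ linear equations in $m+1$ unknowns to produce $x^*\in\operatorname{span}\{b_1^*,\dots,b_{m+1}^*\}$ annihilating $y_1,\dots,y_m$.

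Two smaller issues: first, you write ``$H_{w^*}^p[E]\subseteq\fbl^p[E]$-closure'', but the inclusion runs the other way ($\fbl^p[E]\subseteq\overline{I_{w^*}[E]}\subseteq H_{w^*}^p[E]$), and indeed whether the reverse holds is exactly the open Question~\ref{Q5.8}. Second, the condition $\|x_n\|\to0$ by itself does not place $\bigvee_n|\delta_{x_n}|$ in $\fbl^p[E]$; you would need norm-summability, which is why the paper switches to the series form $\sum 2^{-n}|\delta_{b_n}|$ for the second witness.
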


\begin{proof} The proof has three parts: first we show that conditions~\ref{L5.3a}--\ref{L5.3c} are equivalent, then we show that conditions~\ref{L5.3a} and \ref{L5.3h}--\ref{L5.3g} are equivalent, and finally we show that conditions \ref{L5.3d}--\ref{L5.3f} are equivalent to the other conditions.

  \ref{L5.3a}$\Rightarrow$\ref{L5.3b}. This follows from the first identity in Proposition~\ref{prop:findimcase}.

  \ref{L5.3b}$\Rightarrow$\ref{L5.3c}. This is obvious because $\fbl^p[E]\subseteq H^p[E]$ by definition.

\ref{L5.3c}$\Rightarrow$\ref{L5.3a}. To prove the  contrapositive, suppose that $E$ is infinite-dimensional. Then, by the weak Dvoretzky--Rogers Theorem \cite[Theorem~2.18]{DJT}, $E^*$ contains a sequence $(x_n^*)$ which is weakly $p$-summable, but not strongly $p$-summable. By scaling this sequence by a suitable constant, we may suppose that
  \[ \sup_{x\in B_E}\sum_{n=1}^\infty|x_n^*(x)|^p\leq1\qquad\text{and}\qquad \sum_{n=1}^\infty\|x_n^*\|^p=\infty. \]
 For each $n\in\mathbb N$, choose
$x_n\in B_E$ such that $x_n^*(x_n)\geq \frac{\|x_n^*\|}{2}$, and
choose a decreasing sequence $(s_n)$ in $(0,1)$ such that 
\[ \lim_{n\to\infty} s_n=0\qquad\text{and}\qquad \sum_{n=1}^\infty s_n^p\|x_n^*\|^p=\infty. \]
Lemma~\ref{Lemma:supinHw*} shows that the function
$f = \bigvee_{n\in\mathbb N} \lvert\delta_{s_nx_n}\rvert$
belongs to $H_{w^*}[E]$. However, $f\notin H^p[E]$ because the fact that $\lVert(x_k^*)_{k=1}^m\rVert_{p,{\normalfont{\text{weak}}}}\le 1$ for every $m\in\N$ implies that
\begin{align*}
\|f\|^p_{\fbl^p[E]} &\geq \sum_{k=1}^m |f(x_k^*)|^p = \sum_{k=1}^m \biggl(\bigvee_{n\in\mathbb N} s_n |x_k^*(x_n)|\biggr)^p\\ &\geq \sum_{k=1}^m s_k^p|x_k^*(x_k)|^p
\geq \frac1{2^p}\sum_{k=1}^m s_k^p\|x_k^*\|^p\underset{m\rightarrow\infty}\longrightarrow\infty.
\end{align*}
Consequently $H_{w^*}[E]\nsubseteq H^p[E]$.

\ref{L5.3a}$\Rightarrow$\ref{L5.3h}.
Suppose that $E$ has dimension~$n\in\N$, and let $(b_j)_{j=1}^n$ be a basis for~$E$ with coordinate functionals $(b_j^*)_{j=1}^n$, which form a basis for~$E^*$. We may suppose that $\lVert b_j^*\rVert = 1$ for each $j\in\{1,\ldots,n\}$. Given $f\in H^p[E]$, the number $t = \lVert f{\upharpoonright_{B_{E^*}}}\rVert_\infty$ is finite by Lemma~\ref{L5.1}\ref{L5.1vi'}, and for each non-zero $x^*\in E^*$, we have
    \begin{align*} \lvert f(x^*)\rvert &= \lVert x^*\rVert\,\biggl| f\Bigl(\frac{x^*}{\lVert x^*\rVert}\Bigr)\biggr|\le  \lVert x^*\rVert\cdot t = \Biggl\|\sum_{j=1}^n x^*(tb_j)b_j^*\biggr\| \le \sum_{j=1}^n\lvert x^*(tb_j)\rvert. 
    \end{align*}
 This inequality is trivially true for $x^*=0$, so $\lvert f\rvert\le  \sum_{j=1}^n\lvert \delta_{tb_j}\rvert$,  and  therefore $f\in I[E]$ by Lemma~\ref{L5.1}\ref{L5.1_0}. This  shows that $H^p[E]\subseteq I[E]$, while the opposite inclusion follows from   Lemma~\ref{L5.1}\ref{lemma:Mw*0i}.

It is clear that \ref{L5.3h}$\Rightarrow$\ref{L5.3i}$\Rightarrow$\ref{L5.3g}. 
We also have \ref{L5.3h}$\Rightarrow$\ref{L5.3j}$\Rightarrow$\ref{L5.3g} because Lemma~\ref{L5.1}\ref{lemma:Mw*0i} and~\ref{L5.1iii} show that the following inclusions hold true in general: 
\[ I[E]\subseteq J_{w^*}^p[E]\subseteq H^p[E]\quad \text{and}\quad  \fbl^p[E]\subseteq H_{w^*}^p[E]\subseteq J_{w^*}^p[E]. \] 

We prove that \ref{L5.3g}$\Rightarrow$\ref{L5.3a} by contraposition. Suppose that $E$ is infinite-dimensional, and take a normalized basic sequence $(b_n)_{n\in\N}$ in~$E$.   Let~$b_n^*\in E^*$ be a Hahn--Banach extension of the $n^{\text{th}}$ coordinate functional of this basic sequence for each $n\in\N$, and define $f = \sum_{n=1}^\infty\lvert\delta_{b_n}\rvert/2^n$, which belongs to~$\fbl^p[E]$ because  the series converges absolutely in the norm~$\lVert\,\cdot\,\rVert_{\fbl^p[E]}$.

  We claim that $f\notin I[E]$. By Lemma~\ref{L5.1}\ref{L5.1_0}, this amounts to showing that for every $m\in\N$ and $x_1,\ldots,x_m\in E$, we can find $x^*\in E^*$ such that $f(x^*)>\sum_{j=1}^m\lvert x^*(x_j)\rvert$. Since $m+1>m$, the  system of~$m$ homogeneous linear equations
  \begin{equation*}
  \sum_{k=1}^{m+1}b_k^*(x_j)t_k = 0\qquad (j=1,\ldots,m) \end{equation*}
  in the $m+1$ real variables $t_1,\ldots,t_{m+1}$ has a nontrivial solution, which we denote $(t_1,\ldots,t_{m+1})\in\R^{m+1}\setminus\{(0,\ldots,0)\}$.  Then the functional $x^*= \sum_{k=1}^{m+1} t_kb_k^*\in E^*$ satisfies $x^*(x_j)=0$ for each $j=1,\ldots,m$, and therefore
  \[ f(x^*) =\sum_{n=1}^{m+1}\frac{\lvert t_n\rvert}{2^n}>0 = \sum_{j=1}^m\lvert x^*(x_j)\rvert. \]

We have thus shown that conditions \ref{L5.3a}--\ref{L5.3g} are equivalent. Combining condi\-tions~\ref{L5.3c} and~\ref{L5.3h}, we see that they imply condition~\ref{L5.3d}.

\ref{L5.3d}$\Rightarrow$\ref{L5.3e}. Suppose that $I_{w^*}[E]=H_{w^*}[E]$. Then $I_{w^*}[E]=H_{w^*}^p[E]$, which is closed by Lemma~\ref{L5.1}\ref{L5.1ii'}.  

\ref{L5.3e}$\Rightarrow$\ref{L5.3f}. This follows from the general fact that $\fbl^p[E]\subseteq\overline{I_{w^*}[E]}$.

Finally, we can complete the proof by observing that the implication \ref{L5.3f}$\Rightarrow$\ref{L5.3g} is trivial.
\end{proof}

As a consequence, we obtain the following simplified version of the chains of inclusions exhibited in~\eqref{E:inclusions1} and~\eqref{E:inclusions2} when the Banach space $E$ is finite-dimensional.

 \begin{cor}\label{Cor:3.5} Let $E$ be a Banach space of finite dimension at least~$2$, and take $1\le p<\infty$. Then 
 \[ \fbl^p[E] = I_{w^*}[E] = 
   H_{w^*}[E]\subsetneq I[E] = J_{w^*}^p[E] = M_{w^*,0}^p[E]=H^p[E]. 
\]
 \end{cor}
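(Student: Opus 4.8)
The plan is to assemble the chain from Corollary~\ref{Cor:3.5} by invoking the equivalences already established in Theorem~\ref{L5.3}, together with the general inclusions recorded in \eqref{E:inclusions1} and~\eqref{E:inclusions2}, and only needing to argue the strictness of the middle inclusion separately. Since $E$ is finite-dimensional, Theorem~\ref{L5.3} gives us at once that $\fbl^p[E] = H_{w^*}[E]$ (condition~\ref{L5.3b}), that $I_{w^*}[E] = H_{w^*}[E]$ (condition~\ref{L5.3d}), and that $I[E] = H^p[E] = J_{w^*}^p[E]$ (conditions~\ref{L5.3h} and~\ref{L5.3j}). Thus the only genuinely new pieces are the identification $M_{w^*,0}^p[E] = H^p[E]$ and the strictness of the inclusion separating the two blocks.

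First I would establish the left-hand block of equalities. Combining the three identities just quoted from Theorem~\ref{L5.3} yields $\fbl^p[E] = I_{w^*}[E] = H_{w^*}[E]$ directly. For the right-hand block, conditions~\ref{L5.3h} and~\ref{L5.3j} give $I[E] = J_{w^*}^p[E] = H^p[E]$. It then remains to slot $M_{w^*,0}^p[E]$ into this chain: by Lemma~\ref{L5.1}\ref{lemma:Mw*0ii} we always have $\overline{J_{w^*}^p[E]}\subseteq M_{w^*,0}^p[E]\subseteq H^p[E]$, and since $J_{w^*}^p[E] = H^p[E]$ is already the whole space (hence closed), these inclusions collapse to force $M_{w^*,0}^p[E] = H^p[E]$ as well. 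This disposes of every equality in the stated chain.

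The remaining task is to verify that the inclusion $H_{w^*}[E]\subsetneq I[E]$ is \emph{strict} when $\dim E\ge 2$; this is the step I expect to require the most care, since all the other relations are formal consequences of earlier results. Here I would simply reuse the explicit function constructed in the proof of the implication~\ref{L5.2b}$\Rightarrow$\ref{L5.2c} of Lemma~\ref{L5.2}: choosing linearly independent functionals $x_0^*,y_0^*\in E^*$ and setting $f(x^*)=t$ when $x^*=tx_0^*$ with $t>0$ and $f(x^*)=0$ otherwise, one checks that $f$ is positively homogeneous with $\lvert f\rvert\le\lvert\delta_{x_0}\rvert$ (so $f\in I[E]$), yet $f{\upharpoonright_{B_{E^*}}}$ is discontinuous at~$x_0^*$ along the norm-convergent sequence $x_0^*+\tfrac1n y_0^*$, so $f\notin H_{w^*}[E]$. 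Since $\dim E\ge2$ provides the required pair of independent functionals, this witnesses the strict inclusion and completes the argument.

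Alternatively, one could derive the strictness more abstractly by noting that if $H_{w^*}[E]=I[E]$ held, then combining with the already-established $\fbl^p[E]=H_{w^*}[E]$ and $I[E]=H^p[E]$ would force all the lattices in the chain to coincide, which by the equivalence~\ref{L5.2a}$\Leftrightarrow$\ref{L5.2c} of Lemma~\ref{L5.2} happens only when $\dim E\le 1$, contradicting $\dim E\ge2$. Either route works; I would favour the concrete witness since it is self-contained and makes the nature of the gap transparent.
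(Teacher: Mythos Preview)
Your argument for the equalities is identical to the paper's: Theorem~\ref{L5.3} plus Lemma~\ref{L5.1}\ref{lemma:Mw*0ii}. The only divergence is in how you establish strictness of the middle inclusion. The paper invokes Proposition~\ref{prop:findimcase}, which identifies the two blocks concretely as $C(S_{E^*})$ and $\ell_\infty(S_{E^*})$ via the restriction map~$R$; since $S_{E^*}$ is infinite when $\dim E\ge 2$, these spaces differ. You instead recycle the explicit discontinuous function from the proof of Lemma~\ref{L5.2}, which directly exhibits an element of $I[E]\setminus H_{w^*}[E]$. Both routes are perfectly valid; yours is more self-contained (it avoids appealing to the structural identification in Proposition~\ref{prop:findimcase}), while the paper's approach has the virtue of telling you exactly \emph{what} the two lattices are, not merely that they differ.

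One small quibble about your alternative abstract argument: the reference to \ref{L5.2a}$\Leftrightarrow$\ref{L5.2c} is not quite the cleanest pointer, since showing $\operatorname{lat}\delta[E]=H[E]$ from the collapse of the chain would require an extra step. It is more direct to observe that $H_{w^*}[E]=I[E]$ forces $I_{w^*}[E]=I[E]$ (as $I_{w^*}[E]=I[E]\cap H_{w^*}[E]$), which is condition~\ref{L5.2b}, and then cite \ref{L5.2b}$\Leftrightarrow$\ref{L5.2c}. This is a cosmetic point; your primary argument via the explicit witness is complete as stated.
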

 
 \begin{proof}
 The equalities follow from Theorem~\ref{L5.3} and Lemma~\ref{L5.1}\ref{lemma:Mw*0ii}, while Proposition~\ref{prop:findimcase} implies that the inclusion in the middle is proper.
 \end{proof}

\section{$H_{w^*}^p$ vs.\ $\operatorname{FBL}^p$ for infinite-dimensional spaces}\label{S:4} 

\noindent
Theorem~\ref{L5.3} shows that when the Banach space~$E$ is infinite-dimensional, we should replace  $I[E]$ and  $I_{w^*}[E]$ with their closures $\overline{I[E]}$ and  $\overline{I_{w^*}[E]}$ in the  norm~$\lVert\,\cdot\,\rVert_{\fbl^p[E]}$, for \mbox{$1\le p<\infty$}, before we compare them with the other Banach lattices we study. In this section we shall focus on the inclusions
\[ \fbl^p[E]\subseteq \overline{I_{w^*}[E]}\subseteq H_{w^*}^p[E]. \]
We note that all three sets are closed sublattices of $H^p[E]$, and Theorem~\ref{L5.3} shows that  $\fbl^p[E] = H_{w^*}^p[E]$ when~$E$ is finite-dimensional. 
However, at this point we do not know whether one or both  of these inclusions  could be an equality for some infinite-dimensional Banach space~$E$, raising the following two questions:

\begin{question}\label{Q5.7}
For  which Banach spaces~$E$ is it true that $\fbl^p[E] = \overline{I_{w^*}[E]}$ for some/all $1\le p<\infty$?
\end{question}

\begin{question}\label{Q5.8}
    For which Banach spaces $E$ is it true that $\overline{I_{w^*}[E]} = H_{w^*}^p[E]$  for some/all $1\le p<\infty$?
\end{question}

We begin by addressing the first of these  questions. Our aim is to prove the following result.

\begin{thm}\label{thm:fbl_neq_clIw_p} Let~$E$ be a Banach  space which admits an
infinite-di\-men\-sional, separable quotient space, and let $1\le p<\infty$. Then $I_{w^*}[E]\nsubseteq\fbl^p[E]$, and hence \[ \fbl^p[E]\subsetneq   \overline{I_{w^*}[E]}. \] 
\end{thm}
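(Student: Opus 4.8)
The plan is to construct an explicit function $f\in I_{w^*}[E]$ that fails to lie in $\fbl^p[E]$, exploiting the separable quotient. Let $q\colon E\to Q$ be a quotient map onto an infinite-dimensional separable Banach space $Q$, so that the adjoint $q^*\colon Q^*\to E^*$ is a weak*-to-weak* continuous isometric embedding. The idea is to first build a suitable function $g\in I_{w^*}[Q]$ on the smaller, better-understood space $Q$, and then transport it to $E$ via $q^*$. Concretely, for $g\in H[Q]$ one defines $f = g\circ q^*\in H[E]$; because $q^*$ is weak*-to-weak* continuous and maps $B_{E^*}$ onto $B_{Q^*}$ (by the quotient property), weak* continuity of $g$ on $B_{Q^*}$ yields weak* continuity of $f$ on $B_{E^*}$, and the domination $\lvert g\rvert\le\lvert\delta_y\rvert$ pulls back to $\lvert f\rvert\le\lvert\delta_x\rvert$ for any $x$ with $q(x)=y$, so $f\in I_{w^*}[E]$ whenever $g\in I_{w^*}[Q]$.

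The heart of the matter is therefore to produce $g\in I_{w^*}[Q]$ that is not in $\fbl^p[Q]$, together with an argument that this non-membership survives the transport to $E$. The natural candidate uses separability: since $Q$ is separable and infinite-dimensional, one can fix a dense sequence or a normalized basic-type sequence in $Q$ and build, in the spirit of Lemma~\ref{Lemma:supinHw*}, a weak*-continuous function dominated by a single $\lvert\delta_y\rvert$ that nevertheless oscillates enough to escape $\fbl^p[Q]$. A clean approach is to take $g$ of the form $g = \varphi\circ\delta_y$ for a fixed $y$, where $\varphi\colon\R\to\R$ is positively homogeneous but not linear (for instance built from the positive and negative parts), modulated along a weak*-convergent sequence of functionals so that $g$ remains weak*-continuous on $B_{Q^*}$ while its values along a carefully chosen weakly $p$-summable sequence in $Q^*$ demonstrate that $g$ cannot be approximated in $\lVert\cdot\rVert_{\fbl^p[Q]}$ by lattice combinations of the $\delta_y$'s. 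To certify non-membership in $\fbl^p[E]$, I would use a separating functional or a structural invariant of $\fbl^p[E]$: lattice combinations of $\{\delta_x\}$ satisfy constraints that the constructed $f$ violates.

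The main obstacle I anticipate is precisely this last point: proving $f\notin\fbl^p[E]$. Membership in $H_{w^*}^p[E]$ is easy, but $\fbl^p[E]$ is the \emph{closed} sublattice generated by $\{\delta_x\}$, so ruling out approximation requires more than a pointwise or single-functional obstruction — one needs a quantitative lower bound showing $\operatorname{dist}_{\fbl^p[E]}(f, \operatorname{lat}\delta[E])>0$. The cleanest device is to find a functional in $H^p[E]^*$ (or an order-theoretic test) that annihilates $\operatorname{lat}\delta[E]$, hence all of $\fbl^p[E]$, yet is nonzero on $f$; equivalently, one identifies a property that every element of $\fbl^p[E]$ enjoys (such as a form of weak* continuity stronger than mere membership in $H_{w^*}^p[E]$, or a summability/regularity condition inherited from the generators) and shows $f$ violates it. Designing $g$ so that this separation is both achievable and robust under composition with $q^*$ is the delicate part; I expect the separability of $Q$ to enter exactly here, furnishing the countable data needed to write down the separating obstruction explicitly. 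Once $I_{w^*}[E]\nsubseteq\fbl^p[E]$ is established, the strict inclusion $\fbl^p[E]\subsetneq\overline{I_{w^*}[E]}$ is immediate from~\eqref{E:inclusions1} and~\eqref{E:inclusions2}, since $I_{w^*}[E]\subseteq\overline{I_{w^*}[E]}$ while $I_{w^*}[E]\nsubseteq\fbl^p[E]$ forces the containment $\fbl^p[E]\subseteq\overline{I_{w^*}[E]}$ to be proper.
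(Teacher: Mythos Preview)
Your proposal contains a type error that breaks the construction before it begins. With $q\colon E\to Q$ the quotient map, the adjoint $q^*\colon Q^*\to E^*$ is an isometric embedding \emph{into}~$E^*$, not a map \emph{out of}~$E^*$; consequently the composition $f=g\circ q^*$ makes no sense for $g\in H[Q]$, since~$g$ is a function on~$Q^*$ and you need~$f$ to be a function on~$E^*$. There is no canonical way to turn a positively homogeneous function on $q^*(Q^*)=(\ker q)^\perp$ into one on all of~$E^*$ that preserves weak* continuity and the domination $\lvert f\rvert\le\lvert\delta_x\rvert$. (The composition operator $C_{q^*}$ goes from $H[E]$ to $H[Q]$, not the other way.)

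Even setting this aside, the proposal openly lacks the core of the argument: a mechanism for showing $f\notin\fbl^p[E]$. Neither of your suggested devices works directly. A separating functional on $H^p[E]$ that vanishes on $\operatorname{lat}\delta[E]$ is too strong to hope for in general, and there is no known ``extra regularity'' that elements of~$\fbl^p[E]$ enjoy beyond membership in~$H_{w^*}^p[E]$ (indeed, Question~\ref{Q5.8} asks exactly whether such a distinction exists in certain cases). The paper's approach is quite different: via Lemma~\ref{lemma_sep_quotient}, the separable-quotient hypothesis is converted into a weakly $p$-summable sequence $(x_n^*)$ in~$E^*$ with the property that the subspace $W=\bigcup_m\bigcap_{n\ge m}\ker x_n^*$ is dense in~$E$. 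The constructed function is $f=\lvert\delta_{x_1}\rvert\wedge\bigvee_n\lvert\delta_{s_nx_{n+1}}\rvert$, and the key point is that any lattice expression $h=\bigvee\delta_{v_i}-\bigvee\delta_{w_i}$ with $v_i,w_i\in W_m$ satisfies the \emph{exact identity} $h(m^{-1/p}x_1^*+x_{m+j}^*)=m^{-1/p}h(x_1^*)$ for $j=1,\ldots,m$, since the $v_i,w_i$ lie in $\ker x_{m+j}^*$. Testing~$f$ against the same functionals gives values bounded below by a fixed positive constant, and this yields a uniform lower bound on $\lVert f-h\rVert_{\fbl^p[E]}$ independent of~$h$. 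This ``eventual kernel'' structure, not the quotient map itself, is what produces the separation; your outline does not supply any substitute.
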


We remark that no infinite-dimensional Banach space~$E$ which fails to admit an infinite-dimensional, separable quotient space is known. In view of this, we conjecture that the condition that $\fbl^p[E] = \overline{I_{w^*}[E]}$ for some (or all) $1\le p<\infty$ characterizes that the Banach space~$E$ is finite-dimensional.

The following lemma will play a key role in the proof of Theorem~\ref{thm:fbl_neq_clIw_p}.

  \begin{lem}\label{lemma_sep_quotient} Let $E$ be a Banach space. The following conditions are equivalent:
    \begin{enumerate}[label={\normalfont{(\alph*)}}]
    \item\label{lemma_sep_quotient1} 
    For every $(\lambda_n)\in\ell_2^+$, 
    $E^*$ contains a weakly $1$-summable 
    sequence $(x_n^*)$ such that $\lVert x_n^*\rVert = \lambda_n$ for each $n\in\N$ and the subspace $\bigcup_{m\in\N}\bigcap_{n=m}^\infty\ker x^*_n$ is dense in~$E$.
    \item\label{lemma_sep_quotient1.25}
    For every $(\lambda_n)\in c_0^+$, $E^*$ contains a weakly $2$-summable sequence $(x_n^*)$ such that $\lVert x_n^*\rVert = \lambda_n$ for each $n\in\N$ and the subspace $\bigcup_{m\in\N}\bigcap_{n=m}^\infty\ker x^*_n$ is dense in~$E$.
    \item\label{lemma_sep_quotient1.5}  $E^*$ contains a  sequence $(x_n^*)$ of linearly independent functionals such that the subspace $\bigcup_{m\in\N}\bigcap_{n=m}^\infty\ker x^*_n$ is dense in~$E$.
    \item\label{lemma_sep_quotient2}  $E^*$ contains a  sequence $(x_n^*)$ of non-zero functionals such that the subspace $\bigcup_{m\in\N}\bigcap_{n=m}^\infty\ker x^*_n$ is dense in~$E$.
    \item\label{lemma_sep_quotient3} $E$ admits an infinite-dimensional, separable quotient space.
    \end{enumerate}
  \end{lem}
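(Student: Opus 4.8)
The plan is to establish the equivalences by proving a cycle of implications together with the two quantitative strengthenings. The conceptual core is the equivalence of~\ref{lemma_sep_quotient2} and~\ref{lemma_sep_quotient3}, which translates the density of $\bigcup_{m}\bigcap_{n=m}^\infty\ker x_n^*$ into the existence of an infinite-dimensional separable quotient; everything else either refines or forgets the summability data attached to the sequence $(x_n^*)$. I would organize the argument as the chain
\[ \ref{lemma_sep_quotient1}\Rightarrow\ref{lemma_sep_quotient1.25}\Rightarrow\ref{lemma_sep_quotient1.5}\Rightarrow\ref{lemma_sep_quotient2}\Rightarrow\ref{lemma_sep_quotient3}\Rightarrow\ref{lemma_sep_quotient1}, \]
closing the loop so that all five become equivalent.

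The easy forgetful implications come first. For \ref{lemma_sep_quotient1}$\Rightarrow$\ref{lemma_sep_quotient1.25} I would apply~\ref{lemma_sep_quotient1} with a fixed sequence $(\lambda_n)\in\ell_2^+$ (for instance $\lambda_n = 1/n$) and observe that a weakly $1$-summable sequence is weakly $2$-summable, reparametrizing the prescribed norms to realize an arbitrary $(\lambda_n)\in c_0^+$ by scaling each $x_n^*$; the density of the kernel intersection is unaffected by scaling nonzero functionals. The implications \ref{lemma_sep_quotient1.25}$\Rightarrow$\ref{lemma_sep_quotient1.5}$\Rightarrow$\ref{lemma_sep_quotient2} are purely structural: a weakly $2$-summable sequence with prescribed nonzero norms consists of nonzero functionals, and after passing to a subsequence or applying a standard Gram--Schmidt-type pruning one may arrange linear independence while only shrinking (hence not destroying) the density of the kernel intersection. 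The crux lies in \ref{lemma_sep_quotient2}$\Rightarrow$\ref{lemma_sep_quotient3} and in the return implication \ref{lemma_sep_quotient3}$\Rightarrow$\ref{lemma_sep_quotient1}.

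For \ref{lemma_sep_quotient2}$\Rightarrow$\ref{lemma_sep_quotient3}, the idea is that the functionals $(x_n^*)$ define a bounded linear map $E\to\R^\N$ whose ``tail kernels'' $\bigcap_{n=m}^\infty\ker x_n^*$ have dense union; this density should force the closed subspace $Y=\bigcap_{n\in\N}\ker x_n^*$ to have infinite codimension, and the quotient $E/Y$ — or a suitable separable sub-quotient built from countably many of the functionals — will be an infinite-dimensional separable quotient. Concretely, since the $x_n^*$ annihilate larger and larger pieces of a dense set, no finite collection of them can suffice, yielding infinitely many linearly independent functionals on the quotient and hence infinite dimension; separability follows because the quotient is generated (as a dual pairing) by a countable family. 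For the converse \ref{lemma_sep_quotient3}$\Rightarrow$\ref{lemma_sep_quotient1}, I would start from a quotient map $q\colon E\twoheadrightarrow Z$ onto an infinite-dimensional separable space $Z$, lift functionals via $q^*\colon Z^*\hookrightarrow E^*$ (an isometric embedding), and build inside $Z^*$ a weakly $1$-summable sequence $(z_n^*)$ with $\lVert z_n^*\rVert=\lambda_n$ for the prescribed $(\lambda_n)\in\ell_2^+$ whose kernels exhaust $Z$; pulling back by $q^*$ preserves weak $1$-summability (since $q$ has dense range, the weak-summing norm is computed against $B_Z$) and transports the density statement through $q$.

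The main obstacle I anticipate is the quantitative construction in \ref{lemma_sep_quotient3}$\Rightarrow$\ref{lemma_sep_quotient1}: producing a \emph{weakly $1$-summable} sequence with exactly prescribed norms $\lambda_n$ \emph{and} with dense union of tail kernels simultaneously. Weak $1$-summability is a strong constraint — by~\eqref{Eq:weakpsumnorm:1} it requires $\sup_{\varepsilon_j=\pm1}\lVert\sum_j \varepsilon_j z_j^*\rVert<\infty$ — so the $\ell_2$ decay of $(\lambda_n)$ must be exploited carefully, presumably by arranging the $z_n^*$ to behave like a suitably scaled sequence biorthogonal to a basic sequence in the separable space $Z$ (for example, in a quotient further reduced to one with a basis, or via a Markushevich basis of $Z$). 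Matching the exact norms while keeping the weak $1$-summing norm finite, and ensuring the kernels' union is dense rather than merely infinite-codimensional, is the delicate point where the hypothesis $(\lambda_n)\in\ell_2^+$ rather than merely $c_0^+$ should become essential.
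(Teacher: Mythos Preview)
Your plan has two genuine gaps.

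First, the scaling argument for \ref{lemma_sep_quotient1}$\Rightarrow$\ref{lemma_sep_quotient1.25} does not work. If $(x_n^*)$ is weakly $1$-summable with $\lVert x_n^*\rVert=1/n$ and you want $\lVert y_n^*\rVert=\mu_n$ for an arbitrary $(\mu_n)\in c_0^+$, you must set $y_n^*=n\mu_n x_n^*$, and then weak $2$-summability of $(y_n^*)$ requires a uniform bound on $\sum_n (n\mu_n)^2\lvert x_n^*(x)\rvert^2$. Nothing you know about $(x_n^*)$ controls this when $(n\mu_n)$ is unbounded, which it will be for typical $(\mu_n)\in c_0^+\setminus\ell_2^+$ (take $\mu_n=1/\log n$). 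The paper avoids this by not attempting \ref{lemma_sep_quotient1}$\Rightarrow$\ref{lemma_sep_quotient1.25} at all: it proves \ref{lemma_sep_quotient1.5}$\Leftrightarrow$\ref{lemma_sep_quotient2}$\Leftrightarrow$\ref{lemma_sep_quotient3} first, then establishes \ref{lemma_sep_quotient1.5}$\Rightarrow$\ref{lemma_sep_quotient1} and \ref{lemma_sep_quotient1.5}$\Rightarrow$\ref{lemma_sep_quotient1.25} separately, with the trivial implications \ref{lemma_sep_quotient1}$\Rightarrow$\ref{lemma_sep_quotient2} and \ref{lemma_sep_quotient1.25}$\Rightarrow$\ref{lemma_sep_quotient2} closing the loop.

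Second, and more seriously, your sketch of the hard direction \ref{lemma_sep_quotient3}$\Rightarrow$\ref{lemma_sep_quotient1} is missing the key idea. Biorthogonal functionals to a basic sequence (or to a Markushevich basis) in~$Z$, rescaled to have $\ell_2$-norms, are \emph{not} in general weakly $1$-summable; the weak $1$-summing condition $\sup_{\varepsilon_j=\pm1}\bigl\lVert\sum_j\varepsilon_j z_j^*\bigr\rVert<\infty$ is a genuine geometric constraint that no choice of basis automatically delivers. What is needed is the Dvoretzky--Rogers lemma (as in \cite[Lemma~1.3]{DJT}): in any $2m$-dimensional normed space one can find $m$~vectors of norm in $[1/2,1]$ whose $\ell_2$-combinations are norm-controlled. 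The paper starts from a linearly independent sequence $(y_n^*)$ satisfying~\ref{lemma_sep_quotient1.5} (obtained from a Markushevich basis of the separable quotient), partitions $\N$ into blocks using indices $n_0<n_1<\cdots$ chosen so that $\sum_{j>n_k}\lambda_j^2\le 2^{-2k}$, and applies the Dvoretzky--Rogers lemma inside each finite-dimensional span $\operatorname{span}\{y_j^*:2n_{k-1}<j\le 2n_k\}$ to produce the~$z_j^*$. The point is that each $z_j^*$ lies in a specific finite span of the $y_i^*$, so $\bigcap_i\ker y_i^*\subseteq\ker z_j^*$ for those indices, and this is exactly what transfers the tail-kernel density from $(y_n^*)$ to the new sequence $(x_j^*)=(\lambda_j z_j^*/\lVert z_j^*\rVert)$. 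Your proposal correctly locates the difficulty but does not supply the mechanism that resolves it.
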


  \begin{proof}  We begin by showing that conditions~\ref{lemma_sep_quotient1.5}, \ref{lemma_sep_quotient2} and~\ref{lemma_sep_quotient3} are equivalent. It is clear that \ref{lemma_sep_quotient1.5}$\Rightarrow$\ref{lemma_sep_quotient2}.

\ref{lemma_sep_quotient2}$\Rightarrow$\ref{lemma_sep_quotient3}.   Suppose that
    $(x_n^*)$ is a sequence in~$E^*\setminus\{0\}$  such that $\bigcup_{m\in\N} W_m$ is dense in~$E$, where $W_m:=\bigcap_{n=m}^\infty\ker x^*_n$. Clearly $W_m\subseteq W_{m+1}$ for each $m\in\N$, and either $W_m=W_{m+1}$ or $W_m$~has codimension one in~$W_{m+1}$. We cannot have $W_m=W_{m+1}$ for all but finitely many $m\in\N$ because if we did, we would have $\bigcup_{m\in\N} W_m = W_{m_0}$ for some $m_0\in\N$, so $E=W_{m_0}$ because~$\bigcup_{m\in\N} W_m$ is dense in~$E$ and~$W_{m_0}$ is closed. This would imply that $x_n^*=0$ for each $n\ge m_0$, which contradicts our assumption. Hence, by passing to a subsequence, we may suppose that $W_m\subsetneq W_{m+1}$ for each $m\in\N$. According to a theorem of Mujica  (see \cite[p.~199]{Hajek_Biorth}), this implies that~\ref{lemma_sep_quotient3} is satisfied.

    Alternatively, we can easily verify~\ref{lemma_sep_quotient3} directly, showing that the quotient space $E/W_1$ is separable and infinite-dimensional. To this end, take $w_m\in W_{m+1}\setminus W_m$ such that $W_{m+1} = W_m + \R w_m$ for each $m\in\N$. Then the subspace 
    \[ W_1+\operatorname{span}\{w_m : m\in\N\} \] 
    is dense in~$E$, and therefore the subspace $\operatorname{span}\{Qw_m : m\in\N\}$ is dense in~$E/W_1$, where $Q\colon E\to E/W_1$ denotes the quotient map; hence~$E/W_1$ is separable. Furthermore, $E/W_1$ is infinite-dimensional because the sequence $(Qw_m)_{m\in\N}$ is linearly independent. Indeed, assuming the contrary, we can write $Qw_{m+1} = \sum_{j=1}^ms_jQw_j$ for some $m\in\N$ and some $s_1,\ldots,s_m\in\R$. Consequently $w_{m+1}- \sum_{j=1}^ms_jw_j\in W_1$, so $w_{m+1}\in W_{m+1}$, which is a contradiction.

    \ref{lemma_sep_quotient3}$\Rightarrow$\ref{lemma_sep_quotient1.5}. Suppose that~$W$ is a closed subspace of~$E$ such that $E/W$ is infinite-dimensional and separable. Then $E/W$ has a Markushevich basis (see for instance \cite[Theorem~4.59]{fab-ultimo}), say $(Qx_m)_{m\in\N}$, where $x_m\in E$ and $Q\colon E\to E/W$ denotes the quotient map.  Set  $x_n^* = Q^*(y_n^*)\in E^*$ for each $n\in\N$, where $y_n^*\in (E/W)^*$ is the $n^{\text{th}}$ biorthogonal functional of  $(Qx_m)_{m\in\N}$, and observe that $(x_n^*)_{n\in\N}$ is linearly independent because~$Q^*$ is injective.
    We claim that $\bigcup_{m\in\N}\bigcap_{n=m}^\infty\ker x_n^*$ is dense in~$E$. Indeed, given $x\in E$ and $\epsilon>0$, we can find $m\in\N$, $s_1,\ldots,s_m\in\R$ and $w\in W$ such that $\lVert x-\sum_{j=1}^ms_jx_j-w\rVert<\epsilon$ because $\operatorname{span}\{Qx_n : n\in\N\}$ is dense in~$E/W$. Now the conclusion follows because
    \[ \biggl\langle \sum_{j=1}^ms_jx_j+w, x_n^*\biggr\rangle = \sum_{j=1}^ms_j\langle Qx_j, y_n^*\rangle = 0 \] for each $n>m$, so $\sum_{j=1}^ms_jx_j+w\in \bigcap_{n=m+1}^\infty\ker x_n^*$.

    Clearly \ref{lemma_sep_quotient1}$\Rightarrow$\ref{lemma_sep_quotient2} and \ref{lemma_sep_quotient1.25}$\Rightarrow$\ref{lemma_sep_quotient2}. We shall now complete the proof by showing that    \ref{lemma_sep_quotient1.5}$\Rightarrow$\ref{lemma_sep_quotient1} and \ref{lemma_sep_quotient1.5}$\Rightarrow$\ref{lemma_sep_quotient1.25}. This is the harder part of the proof, but it is actually quite simple, as the former consists of making appropriate adjustments to the proof of the Dvoretzky--Rogers Theorem given in \cite[Theorem~1.2]{DJT}, and the latter is a minor variation, using ideas from \cite[Theorem~2]{Grothendieck}. We include the details for the convenience of the reader.

\ref{lemma_sep_quotient1.5}$\Rightarrow$\ref{lemma_sep_quotient1}. Let $(\lambda_n)\in\ell_2^+$, 
and set $n_0=0$. As in the proof of \cite[Theorem~1.2]{DJT}, 
we choose integers $1\le n_1<n_2<\cdots$ such that 
\begin{equation}\label{Eq:DJTproof1}
\sum_{j=n_k+1}^\infty\lambda_j^2\le 2^{-2k}\qquad (k\in\N).
\end{equation}
Suppose that $(y_n^*)$ is a linearly independent sequence in~$E^*$ such that the subspace
$\bigcup_{m\in\N}\bigcap_{n=m}^\infty\ker y^*_n$ is dense in~$E$. 
For each $k\in\N$,  we can apply \cite[Lemma~1.3]{DJT} to the $2(n_k-n_{k-1})$-dimensional space $\operatorname{span} \{ y_j^* : 2n_{k-1}<j\le 2n_k\}$ to find functionals \[ z_{n_{k-1}+1}^*,z_{n_{k-1}+2}^*,\ldots,z_{n_k}^*\in\operatorname{span} \{ y_j^* : 2n_{k-1}<j\le 2n_k\}, \] each having norm between~$1/2$ and~$1$, such that
\begin{equation}\label{lemma_sep_quotient:eq2} \biggl\|\sum_{j=n_{k-1}+1}^{n_k}\alpha_jz_j^*\biggr\|\le \biggl(\sum_{j=n_{k-1}+1}^{n_k}\alpha_j^2\biggr)^{1/2}\qquad (\alpha_{n_{k-1}+1},\ldots,\alpha_{n_k}\in\R). \end{equation}
Define $x_j^* = \lambda_jz_j^*/\lVert z_j^*\rVert\in E^*$ for every $j\in\{n_{k-1}+1,\ldots,n_k\}$, so that $\lVert x_j^*\rVert = \lambda_j$, and observe that
\begin{equation}\label{lemma_sep_quotient:eq1} \bigcap_{i=2n_{k-1}+1}^{2n_k}\ker y_i^*\subseteq\ker z_j^*\subseteq\ker x_j^*\qquad (j\in\{n_{k-1}+1,\ldots,n_k\}). \end{equation}
The argument given in~\cite{DJT} shows that the sequence $(x_j^*)$ is weakly $1$-summable. 

Hence it only remains to show that $\bigcup_{m\in\N}\bigcap_{n=m}^\infty\ker x_n^*$ is dense in~$E$. Given $x\in E$ and $\epsilon>0$,  choose $m\in\N$ and $y\in\bigcap_{n=m}^\infty\ker y_n^*$ such that $\lVert x-y\rVert<\epsilon$. Then, choosing $\ell\in\N_0$ such that $2n_\ell+1\ge m$, we obtain
\[ y\in \bigcap_{i=2n_\ell+1}^\infty\ker y_i^* = \bigcap_{k=\ell}^\infty\biggl(\bigcap_{i=2n_k+1}^{2n_{k+1}}\ker y_i^*\biggr)\subseteq \bigcap_{k=\ell}^\infty\biggl(\bigcap_{j=n_k+1}^{n_{k+1}}\ker x_j^*\biggr) = \bigcap_{j=n_\ell+1}^\infty\ker x_j^*, \]
where the inclusion in the middle follows from~\eqref{lemma_sep_quotient:eq1}. 

\ref{lemma_sep_quotient1.5}$\Rightarrow$\ref{lemma_sep_quotient1.25}. As already stated, this proof is a minor variation of the argument we have just given to show that \ref{lemma_sep_quotient1.5}$\Rightarrow$\ref{lemma_sep_quotient1}, so we shall focus on explaining the differences. It suffices to consider  $(\lambda_n)\in c_0^+$ with norm at most~$1$. We can no longer ensure that~\eqref{Eq:DJTproof1} holds; instead, we choose integers $0=n_0< n_1<n_2<\cdots$ such that 
\begin{equation}\label{lemma_sep_quotient:eq3}
    \sup_{j > n_k}\lambda_j\leq 2^{-k}\qquad (k\in\N).
\end{equation}
Our assumption that $\lVert(\lambda_n)\rVert_\infty\le1$ implies that this estimate is also valid for $k=0$.
Now, with $(y_n^*)$ chosen as before,  we can find the sequence $(z_j^*)$ and define $(x_j^*)$ as above. Then the proof that $\bigcup_{m\in\N}\bigcap_{n=m}^\infty\ker x_n^*$ is dense in~$E$ carries over verbatim, so we only need to verify that $(x_j^*)$ is weakly $2$-summable. For every $(a_j)\in B_{\ell_2}$ and $m\in\N$, we can apply~\eqref{lemma_sep_quotient:eq2}, \eqref{lemma_sep_quotient:eq3} and the fact that $\lVert z_j^*\rVert\ge\frac12$ to obtain   
\begin{align*} \biggl\|\sum_{j=1}^{n_m} a_jx_j^*\biggr\|&\leq \sum_{k=1}^m\biggl\|\sum_{j=n_{k-1}+1}^{n_{k}}a_jx_j^*\biggr\| = \sum_{k=1}^m\biggl\|\sum_{j=n_{k-1}+1}^{n_{k}}\frac{a_j\lambda_j}{\lVert z_j^*\rVert}z_j^*\biggr\|\\ &\leq \sum_{k=1}^m\biggl(\sum_{j=n_{k-1}+1}^{n_{k}}\frac{a_j^2\lambda_j^2}{\lVert z_j^*\rVert^2}\biggr)^{\frac12}\le 
\sum_{k=1}^m 2^{2-k}\biggl(\sum_{j=n_{k-1}+1}^{n_{k}}a_j^2\biggr)^{\frac12}\leq4. \end{align*}
This shows that $(x_j^*)$ is weakly $2$-summable because $n_m\to\infty$ as $m\to\infty$.
\end{proof}

\begin{proof}[Proof of Theorem~{\normalfont{\ref{thm:fbl_neq_clIw_p}}}] This proof is a more subtle variant of the proof of the implication \ref{L5.3c}$\Rightarrow$\ref{L5.3a} in Theorem~\ref{L5.3}. 

Recall that we seek a function $f\in I_{w^*}[E]\setminus\fbl^p[E]$. It turns out to be convenient to decompose~$E$ as  $E=F\oplus\R x_1$ and then carry out the main technical part of the construction of~$f$ within the subspace~$F$. Consequently we choose $x_1\in E$ and $x_1^*\in E^*$ such that $\lVert x_1\rVert =  \lVert x_1^*\rVert = 1 = x_1^*(x_1)$, and we then define $F=\ker x_1^*$. 

Since $E$ admits an
  infinite-dimensional, separable quotient space, so does~$F$; that is, $F$ satisfies one, and hence all, of the equivalent conditions in Lemma~\ref{lemma_sep_quotient}. Using this, we shall prove that, for a suitably chosen real number $q>p$, $F^*$~contains a weakly $p$-summable sequence $(y_n^*)$ such that 
  \begin{equation}\label{E:eq2} \lVert y^*_n\rVert = n^{-\frac1q}\qquad (n\in\N)\qquad\text{and}\qquad \overline{\bigcup_{m\in\N}\bigcap_{n\ge m}\ker y^*_n} = F. \end{equation} 
We split the proof of this claim  in two cases:
\begin{itemize}
    \item If $p<2$, we choose $q\in(p,2)$ and observe that $(n^{-\frac1q})_{n\in\N}\in\ell_2^+$. Therefore Lemma~\ref{lemma_sep_quotient}\ref{lemma_sep_quotient1} implies that~$F^*$ contains a weakly $1$-summable sequence $(y_n^*)$ which satisfies~\eqref{E:eq2}. Since $p\ge1$, it follows that $(y_n^*)$ is $p$-summable. 
    \item Otherwise  $p\ge 2$, in which case we choose $q\in(p,\infty)$. Since \mbox{$(n^{-\frac1q})_{n\in\N}\in c_0^+$}, Lemma~\ref{lemma_sep_quotient}\ref{lemma_sep_quotient1.25} shows that~$F^*$ contains a weakly $2$-summable sequence $(y_n^*)$ which satisfies~\eqref{E:eq2}. The fact that $p\ge2$ ensures that $(y_n^*)$ is $p$\nobreakdash-summable. 
\end{itemize}

Let~$P$ be the projection of~$E$ onto~$F$ given by $Px = x- x_1^*(x)x_1$ for $x\in E$. 
For each $n\in\N$, set $x_{n+1}^* = P^*(y_n^*)\in E^*$, and take a unit vector $x_{n+1}\in F$ such that $y^*_n(x_{n+1})\ge \lVert y_n^*\rVert/2$.  Then we have:
  \begin{itemize}
  \item $x_{n+1}^*(x_1) =0$ and $x_{n+1}^*(x_{n+1})\ge \lVert y_{n}^*\rVert/2$ for every $n\in\N$;
  \item the subspace $W=\bigcup_{m\in\N}W_m$ is dense in~$E$, where $W_m = \bigcap_{n=m}^\infty\ker x^*_n$;
  \item the sequence $(x_n^*)$ is weakly $p$-summable, so 
  \begin{equation}\label{E:eq1}
        K := \sup_{x\in B_E}\biggl(\sum_{n=1}^\infty \lvert x_n^*(x)\rvert^p\biggr)^{\frac1p} <\infty. 
  \end{equation}
  \end{itemize}
  Set $s_n = n^{\frac1q-\frac1p}\in(0,1]$ for each $n\in\N$. Then 
  $s_n\to 0$ as $n\to\infty$, so Lemma~\ref{Lemma:supinHw*} shows that the function $g = \bigvee_{n=1}^\infty \lvert \delta_{s_nx_{n+1}}\rvert$ belongs to~$H_{w^*}[E]$, and therefore $f =\lvert\delta_{x_1}\rvert\wedge g$ belongs to~$I_{w^*}[E]$ by  Lemma~\ref{L5.1}\ref{L5.1ii}.
We shall now complete the proof  by showing  that $f\notin \fbl^p[E]$, using $(x_n^*)$ to produce a suitable ``witness sequence''.

The sublattice~$L$ of~$H[E]$ generated by $\{\delta_w : w\in W\}$ is dense in $\fbl^p[E]$ because~$W$ is dense in~$E$ (see the construction of $\fbl^p[E]$ in \cite[Theorem 6.1]{JLTTT}).  
Therefore it will suffice to show that 
\begin{equation}\label{thm:fbl_neq_clIw_p:eq0} \inf\bigl\{\lVert f - h\rVert_{\fbl^p[E]} : h\in L\bigr\}>0. \end{equation} 
Since~$W$ is a subspace of~$E$, every function $h\in W$ can be expressed in the form $h = \bigvee_{i=1}^k \delta_{v_i} - \bigvee_{i=1}^k \delta_{w_i}$ for some $k\in\N$ and some $v_1,\ldots,v_k,w_1,\ldots,w_k\in W$. As~$W$ is the union of the increasing sequence $(W_m)$ of subspaces, we can find $m\in\N$ such that $v_1,\ldots,v_k,w_1,\ldots,w_k\in W_m$. Then, for each $j\in\{1,\ldots,m\}$, we see that the functional $z_j^* = m^{-\frac1p} x_1^* + x_{m+j}^*\in E^*$ satisfies
\begin{align}\label{eq:0607eq3new}  h(z_j^*) &=  \bigvee_{i=1}^k \bigl(m^{-\frac1p} x_1^*(v_i) + x_{m+j}^*(v_i)\bigr) - \bigvee_{i=1}^k \bigl(m^{-\frac1p}x_1^*(w_i) + x_{m+j}^*(w_i)\bigr)\notag\\  &=   \bigvee_{i=1}^k m^{-\frac1p} x_1^*(v_i) - \bigvee_{i=1}^k m^{-\frac1p}x_1^*(w_i) = m^{-\frac1p}h(x_1^*) \end{align}
because $v_1,\ldots,v_k,w_1,\ldots,w_k\in W_m\subseteq\ker x_{n}^*$ for every $n\ge m$. 

It follows from~\eqref{E:eq1} that $\lVert(x_j^*)_{j=m+1}^{2m}\rVert_{p,{\normalfont{\text{weak}}}}\le K$. Combining this estimate with the easy observation that 
\begin{equation}\label{E:lowerestimate0} \Bigl\lVert\Bigl(\underbrace{m^{-\frac1p} x_1^*,\ldots,m^{-\frac1p} x_1^*}_{m}\Bigr)\Bigr\rVert_{p,{\normalfont{\text{weak}}}} = \lVert x^*_1\rVert = 1 \end{equation}
and the subadditivity of~$\lVert\,\cdot\,\rVert_{p,{\normalfont{\text{weak}}}}$, we see that $\lVert(z_j^*)_{j=1}^m\rVert_{p,{\normalfont{\text{weak}}}}\le K+1$, and consequently
\begin{equation}\label{E:lowerestimate2} (K+1)\lVert f-h\rVert_{\fbl^p[E]}\ge \biggl(\sum_{j=1}^{m}\bigl\lvert f(z_j^*)-h(z_j^*)\bigr\rvert^p\biggr)^\frac1p.  \end{equation}
Another application of~\eqref{E:lowerestimate0} shows that
\begin{equation}\label{E:lowerestimate1} \lVert f-h\rVert_{\fbl^p[E]}\ge \biggl(\sum_{j=1}^{m}\bigl\lvert m^{-\frac1p}\bigl(f(x_1^*)-h(x_1^*)\bigr)\bigr\rvert^p\biggr)^{\frac1p}.
\end{equation}
Now we combine~\eqref{E:lowerestimate2} and~\eqref{E:lowerestimate1} with the subadditivity of the norm on~$\ell_p^{m}$ to obtain
\begin{align}\label{E:lowerestimate3}  (K+2)\lVert f-h\rVert_{\fbl^p[E]}&\ge \biggl(\sum_{j=1}^{m}\bigl\lvert f(z_j^*)-h(z_j^*) - m^{-\frac1p}\bigl(f(x_1^*) - h(x_1^*)\bigr)\bigr\rvert^p\biggr)^{\frac1p}\notag\\ &= \biggl(\sum_{j=1}^{m}\bigl\lvert f(z_j^*) - m^{-\frac1p}f(x_1^*)\bigr\rvert^p\biggr)^\frac1p,  \end{align}
where the final equality follows from~\eqref{eq:0607eq3new}.

To find a lower bound on the right-hand side of~\eqref{E:lowerestimate3}, we recall that $f =\lvert\delta_{x_1}\rvert\wedge g$, where $g(x_1^*) =0$  because $x_{n+1}\in F= \ker x_1^*$ for every $n\in\N$, so $f(x_1^*) = 0$, while
\begin{align*} f(z_j^*) &= \bigl|m^{-\frac1p} x_1^*(x_1) + x_{m+j}^*(x_1)\bigr|\wedge \bigvee_{n=1}^\infty s_n\bigl| m^{-\frac1p}x_1^*(x_{n+1}) + x_{m+j}^*(x_{n+1})\bigr|\\ &= m^{-\frac1p}\wedge \bigvee_{n=1}^\infty s_n\bigl\lvert x_{m+j}^*(x_{n+1})\bigr\rvert\ge  m^{-\frac1p}\wedge  s_{m+j-1}\bigl\lvert x_{m+j}^*(x_{m+j})\bigr\rvert\\ &\ge m^{-\frac1p}\wedge \frac{s_{m+j-1}\lVert y_{m+j-1}^*\rVert}2 = m^{-\frac1p}\wedge \frac12 (m+j-1)^{-\frac1p} 
\ge \frac{1}{2}(2m)^{-\frac1p}
\end{align*} for each $j\in\{1,\ldots,m\}$. Substituting these estimates into~\eqref{E:lowerestimate3}, we conclude that
\begin{align*} (K+2)\lVert f-h\rVert_{\fbl^p[E]} &\ge
2^{-\frac{p+1}{p}}, 
\end{align*} 
from which~\eqref{thm:fbl_neq_clIw_p:eq0} follows because the constant~$K$ is  independent of~$h$.

This completes the proof that $f\in I_{w^*}[E]\setminus\fbl^p[E]$. The final clause follows because $\fbl^p[E]\subseteq\overline{I_{w^*}[E]}$. 
\end{proof}

Moving on to Question~\ref{Q5.8}, we shall only give one result which shows that this inclusion need not be proper, and finding a general answer may not be easy.

\begin{prop}
For $1\leq p<\infty$ and any set $A$, 
\[ \overline{I_{w^*}[\ell_1(A)]} = H_{w^*}^p[\ell_1(A)], \] 
where the closure is taken with respect to the norm $\|\cdot\|_{\fbl^p[\ell_1(A)]}$.
\end{prop}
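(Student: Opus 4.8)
The plan is to prove the non-trivial inclusion $H_{w^*}^p[\ell_1(A)]\subseteq\overline{I_{w^*}[\ell_1(A)]}$, since the reverse inclusion $\overline{I_{w^*}[\ell_1(A)]}\subseteq H_{w^*}^p[\ell_1(A)]$ is already supplied by Lemma~\ref{L5.1}\ref{lemma:Mw*0iii}. The guiding principle is that $\ell_1(A)^* = \ell_\infty(A)$ has a very rigid dual unit ball: weak*-convergence on $B_{\ell_\infty(A)}$ is essentially coordinatewise convergence, and a weakly $p$-summable sequence in $\ell_\infty(A)$ is severely constrained. First I would fix $f\in H_{w^*}^p[\ell_1(A)]$ and exploit the lattice structure of $H_{w^*}^p$ (it is a closed sublattice by Lemma~\ref{L5.1}\ref{L5.1ii'}) to reduce to approximating the positive part $f^+$ and negative part $f^-$ separately, so we may assume $f\geq 0$.

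The central step is an approximation-by-finitely-supported-truncation argument. For a finite subset $B\subseteq A$, let $P_B$ denote the restriction of functionals to the coordinates in $B$ (equivalently, the natural norm-one projection on $\ell_\infty(A)$), and consider the function $f_B = f\circ P_B$, or more precisely a truncation of $f$ that only ``sees'' coordinates in $B$. The aim is to show that such $f_B$ lies in $I_{w^*}[\ell_1(A)]$ and that $\lVert f - f_B\rVert_{\fbl^p[\ell_1(A)]}\to 0$ along the net of finite subsets $B$. For membership in $I_{w^*}$, the key point is that $f_B$ should be dominated by a finite lattice combination of the $\lvert\delta_{e_a}\rvert$ for $a\in B$ (where $e_a$ are the canonical basis vectors of $\ell_1(A)$), using that on $B_{\ell_\infty(A)}$ the quantity $\bigvee_{a\in B}\lvert x^*(e_a)\rvert = \bigvee_{a\in B}\lvert x^*_a\rvert$ controls a weak*-continuous, finitely-supported function; weak*-continuity of $f_B$ restricted to $B_{E^*}$ follows because it depends continuously on only finitely many coordinates. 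The norm convergence is where the weak $p$-summability hidden in the definition of $\lVert\cdot\rVert_{\fbl^p}$ must be reconciled with the control afforded by $f\in H^p$ together with the weak*-continuity at the relevant ``tail'' behaviour.

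The hard part will be establishing the norm estimate $\lVert f - f_B\rVert_{\fbl^p[\ell_1(A)]}\to 0$. The difficulty is that the $\fbl^p$-norm is computed as a supremum over all weakly $p$-summing tuples of functionals in $\ell_\infty(A)$, and one must show that the contribution of the ``unseen'' coordinates $A\setminus B$ to this supremum becomes uniformly small. Here I expect to use two facts in tandem: that $f$ is weak*-continuous on $B_{E^*}$ (so $f$ is genuinely determined by limiting coordinate behaviour and cannot concentrate mass on increasingly spread-out tails), and that weakly $p$-summable tuples in $\ell_\infty(A)$ satisfy a uniform smallness condition on their tails, namely $\sup_{\lVert(x_j^*)\rVert_{p,\text{weak}}\le1}\sum_j \bigl(\sup_{a\notin B}\lvert x_j^*(e_a)\rvert\bigr)^p$ can be made small by a suitable choice of $B$ depending on a given $\eps$. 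Making this last quantitative claim precise, and matching it against the modulus of weak*-continuity of $f$, is the crux; it is plausibly where one invokes a compactness or equicontinuity argument on the weak*-compact ball $B_{\ell_\infty(A)}$.

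Once the truncations $f_B\in I_{w^*}[\ell_1(A)]$ are shown to converge to $f$ in $\lVert\cdot\rVert_{\fbl^p}$, we immediately conclude $f\in\overline{I_{w^*}[\ell_1(A)]}$, completing the reverse inclusion and hence the equality. A clean way to organize the tail estimate may be to first treat the case $A=\N$ (so $E=\ell_1$) where the net of finite sets can be replaced by the increasing sequence $B_N=\{1,\dots,N\}$ and everything is genuinely sequential, and then observe that a general $f\in H_{w^*}^p[\ell_1(A)]$ already has, by its membership in $H^p$ and the structure of weakly $p$-summable sequences in $\ell_\infty(A)$, support concentrated on a countable subset of $A$; this reduces the general case to the countable one and sidesteps subtleties with uncountable index sets.
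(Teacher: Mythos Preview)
Your approach has a genuine gap at the step you yourself flag as ``the hard part''. The quantitative tail claim you propose, namely that
\[
\sup_{\lVert(x_j^*)\rVert_{p,\text{weak}}\le1}\ \sum_j \Bigl(\sup_{a\notin B}\lvert x_j^*(e_a)\rvert\Bigr)^p
\]
can be made small by choosing the finite set~$B$ large enough, is simply false whenever~$A$ is infinite: for any finite~$B$, take a single functional $x_1^*\in\ell_\infty(A)$ of norm~$1$ supported on~$A\setminus B$; then $\lVert(x_1^*)\rVert_{p,\text{weak}}=1$ while $\sup_{a\notin B}\lvert x_1^*(e_a)\rvert=1$, so the supremum above is always at least~$1$. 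Without this estimate, nothing in your outline forces $\lVert f - f\circ P_B\rVert_{\fbl^p}\to 0$; indeed, a weakly $p$-summing test tuple can live entirely outside~$B$, where $P_B$ kills it and the difference $f-f\circ P_B$ reproduces the full value $f(x_j^*)$. The vaguer appeals to ``modulus of weak*-continuity'' and compactness of $B_{\ell_\infty(A)}$ do not repair this, because the obstacle is not continuity of~$f$ but the unlimited freedom of the test functionals in the $\fbl^p$-norm to avoid any prescribed finite coordinate set.

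The paper's argument proceeds along an entirely different axis and avoids any norm-approximation of~$f$ by finitely supported truncations. It combines two external structural facts special to $E=\ell_1(A)$: first, that $\operatorname{lat}\delta[E]$ is \emph{order dense} in~$H_{w^*}[E]$ (so every $f\in H_{w^*}^p[E]_+$ is the pointwise supremum of the upward-directed set $S_f=\{g\in\operatorname{lat}\delta[E]:0\le g\le f\}$), and second, that $\fbl^p[\ell_1(A)]$ has the \emph{strong Nakano property}, which produces a single $h\in\fbl^p[E]$ dominating every element of~$S_f$, hence $h\ge f$. One then concludes $f\in\overline{I_{w^*}[E]}$ because $\overline{I_{w^*}[E]}$ is an ideal of~$H_{w^*}^p[E]$ containing~$h$. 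Thus the result comes from an \emph{order} bound, not a metric approximation; your truncation scheme, as stated, does not supply either.
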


\begin{proof} Set $E = \ell_1(A)$. 
Let us start by noting that \cite[Theorem 2.8]{OTTT} shows that $\operatorname{lat}\delta[E]$ is order dense in $H_{w^*}[E]$. Hence, for every $f\in H_{w^*}^p[E]_+$, the set 
\[ S_f=\{g\in \operatorname{lat}\delta[E]:0\leq g\leq f\} \]
satisfies  $f=\sup S_f$. Furthermore, $S_f$ is upwards directed because $g_1\vee g_2\in S_f$ whenever $g_1,g_2\in S_f$, and we have $\sup_{g\in S_f}\|g\|_{\fbl^p[E]}\leq\|f\|_{\fbl^p[E]}$.

Consequently, we can use the fact that $\fbl^p[E]$ has the strong Nakano property (this is proved for $p=1$ in \cite[Theorem 4.11]{ART}, with a similar argument giving the result for $p>1$, see \cite[Section 4]{ABT} for details) to conclude that there exists $h\in \fbl^p[E]\subset \overline{I_{w^*}[E]}$ such that $h\geq g$ for every $g\in S_f$. Then $h\ge\sup S_f = f$, so we get that $f\in\overline{I_{w^*}[E]}$. Hence, $H_{w^*}^p[E]\subseteq \overline{I_{w^*}[E]}$. The opposite inclusion follows from Lemma~\ref{L5.1}\ref{lemma:Mw*0iii}.
\end{proof}

\section{Characterizations of reflexivity}\label{S:5}
\noindent 
To understand the case where the underlying Banach space $E$ is reflexive, we begin with the observation that $E^{**}\subseteq H[E]$ because every functional $x^{**}\in E^{**}$ is positively homogeneous. Under this inclusion, the canonical image of $E$ inside $E^{**}$ is identified with $\{ \delta_x : x\in E\}$. The following lemma clarifies where the elements of~$E^{**}$ sit in the hierarchy of sublattices \eqref{E:inclusions1}--\eqref{E:inclusions2} of~$H[E]$ that we study. 

\begin{lem}\label{L5.4}
  Let $E$ be a Banach space, and take $1\le p<\infty$. Then:
\begin{enumerate}[label={\normalfont{(\roman*)}}]
\item\label{L5.1iv} $E^{**}\subseteq H^p[E]$ with $\lVert x^{**}\rVert_{\fbl^p[E]} =\lVert x^{**}\rVert$ for every $x^{**}\in E^{**}$.
\item\label{L5.1v} $E^{**}\cap I[E]=E^{**}\cap H_{w^*}[E] = E^{**}\cap M_{w^*,0}^p[E]=\{ \delta_x : x\in E\}$.
\end{enumerate}
\end{lem}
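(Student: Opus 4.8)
\textbf{Proof plan for Lemma~\ref{L5.4}.}

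For part~\ref{L5.1iv}, the plan is to show that restricting the supremum in the definition~\eqref{Defn:FBLpNorm} of $\lVert\,\cdot\,\rVert_{\fbl^p[E]}$ to a functional $x^{**}\in E^{**}$ recovers exactly its bidual norm. First I would observe that for any finite tuple $(x_j^*)_{j=1}^n$ in $E^*$, the quantity $\bigl(\sum_{j=1}^n \lvert x^{**}(x_j^*)\rvert^p\bigr)^{1/p}$ is controlled by the weak $p$-summing norm $\lVert (x_j^*)_{j=1}^n\rVert_{p,\text{weak}}$ times $\lVert x^{**}\rVert$; this is the standard duality between the weak $p$-summing norm in $E^*$ and the strong $p$-norm evaluated against a bounded functional, and it gives $\lVert x^{**}\rVert_{\fbl^p[E]}\le\lVert x^{**}\rVert$. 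For the reverse inequality I would test against a single well-chosen functional: since $\lVert x^*\rVert_{p,\text{weak}} = \lVert x^*\rVert$ for a singleton (used already in Lemma~\ref{L5.1}\ref{L5.1vi'}), choosing $n=1$ and a norming $x_1^*\in B_{E^*}$ with $x^{**}(x_1^*)$ close to $\lVert x^{**}\rVert$ yields $\lVert x^{**}\rVert_{\fbl^p[E]}\ge\lVert x^{**}\rVert$. In particular $\lVert x^{**}\rVert_{\fbl^p[E]}<\infty$, so $E^{**}\subseteq H^p[E]$.

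For part~\ref{L5.1v}, the strategy is to prove a chain of inclusions closing back on itself. The inclusion $\{\delta_x : x\in E\}\subseteq E^{**}\cap I[E]$ is immediate since each $\delta_x$ lies in $I[E]$ by definition and corresponds to the canonical image of $x$ in $E^{**}$. I would then show $E^{**}\cap I[E]\subseteq E^{**}\cap H_{w^*}[E]$ and $E^{**}\cap H_{w^*}[E]\subseteq E^{**}\cap M_{w^*,0}^p[E]$: the first because $I[E]\subseteq J_{w^*}^p[E]\subseteq M_{w^*,0}^p[E]$ (Lemma~\ref{L5.1}\ref{lemma:Mw*0i},\ref{lemma:Mw*0ii}) combined with $H_{w^*}[E]$-membership coming from the ideal structure, and the second because weak*-continuity on $B_{E^*}$ trivially implies weak*-continuity at $0$. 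The crux is the closing inclusion $E^{**}\cap M_{w^*,0}^p[E]\subseteq\{\delta_x : x\in E\}$.

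The main obstacle, and the heart of the lemma, is this last inclusion: I must show that a bidual functional $x^{**}$ whose restriction to $B_{E^*}$ is weak*-continuous \emph{at the single point $0$} already forces $x^{**}$ to be weak*-continuous everywhere, hence to lie in $E$ by Grothendieck's characterization of the canonical image. The key idea is to exploit \emph{linearity}: for a linear functional $x^{**}$, continuity at $0$ propagates to continuity at every point, since $x^{**}(y^*) - x^{**}(z^*) = x^{**}(y^*-z^*)$ and a weak*-null net translated to $z^*$ gives a net converging weak* to $z^*$. Thus weak*-continuity at $0$ upgrades to weak*-continuity on all of $E^*$ (after controlling scaling, which is harmless by positive homogeneity and linearity), and a weak*-continuous linear functional on $E^*$ is, by definition of the weak* topology, evaluation at some $x\in E$, i.e.\ $x^{**}=\delta_x$. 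I expect the only delicate point to be confirming that the net argument stays within the ball $B_{E^*}$ where continuity is assumed; this can be arranged by a scaling reduction, placing the relevant points inside a fixed multiple of the ball and invoking homogeneity.
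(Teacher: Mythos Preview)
Your plan for part~\ref{L5.1iv} matches the paper's argument. For part~\ref{L5.1v}, two issues deserve comment.

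First, the chain you set up has a weak link: your justification for $E^{**}\cap I[E]\subseteq E^{**}\cap H_{w^*}[E]$ via ``$I[E]\subseteq J_{w^*}^p[E]\subseteq M_{w^*,0}^p[E]$ combined with $H_{w^*}[E]$-membership from the ideal structure'' does not work, because $I[E]\not\subseteq H_{w^*}[E]$ whenever $\dim E\ge 2$ (this is exactly the content of Lemma~\ref{L5.2}). This is not fatal: the paper avoids the problem by not chaining the three sets linearly but instead showing that each is sandwiched between $\{\delta_x:x\in E\}$ and $E^{**}\cap M_{w^*,0}^p[E]$, using $I[E]\subseteq M_{w^*,0}^p[E]$ and the trivial inclusion $H_{w^*}^p[E]\subseteq M_{w^*,0}^p[E]$. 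Once the closing inclusion is proved, all three equalities fall out.

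Second, and more substantively, your closing step overreaches. You claim that linearity plus scaling upgrades weak*-continuity at~$0$ on~$B_{E^*}$ to weak*-continuity on all of~$E^*$, and then invoke the elementary fact that $(E^*,w^*)^*=E$. Your scaling argument does show that if $(y_\alpha^*)$ is a \emph{bounded} net converging weak* to~$y^*$, then $(y_\alpha^*-y^*)/(2M)\to 0$ in~$B_{E^*}$ and hence $x^{**}(y_\alpha^*)\to x^{**}(y^*)$; but weak*-convergent nets in~$E^*$ need not be bounded, and basic weak*-neighbourhoods of~$0$ in~$E^*$ are unbounded whenever~$E$ is infinite-dimensional, so scaling alone cannot deliver global weak*-continuity. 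What your argument legitimately establishes is that $x^{**}{\upharpoonright_{B_{E^*}}}$ is weak*-continuous (equivalently, that $\ker x^{**}\cap B_{E^*}$ is weak*-closed). Passing from this to $x^{**}\in E$ is precisely the Krein--Smulian theorem, which the paper invokes explicitly (citing \cite[Corollary~3.94]{fab-ultimo}). So the missing ingredient in your plan is not a technicality to be absorbed by ``scaling reduction''; it is the Krein--Smulian theorem itself.
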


\begin{proof}
\ref{L5.1iv}.  The Hahn--Banach Theorem implies that
  \[ \biggl(\sum_{j=1}^m \lvert x^{**}(x_j^*)\rvert^p\biggr)^{\frac1p}\le \lVert x^{**}\rVert\cdot \lVert(x_j^*)_{j=1}^m\rVert_{p,{\normalfont{\text{weak}}}} \]
  for $x^{**}\in E^{**}$,  $m\in\N$ and $x_1^*,\ldots,x_m^*\in E^*$, and hence $\lVert x^{**}\rVert_{\fbl^p[E]}\le\lVert x^{**}\rVert$. The opposite inequality follows from the fact that
  $\lVert x^*\rVert_{p,{\normalfont{\text{weak}}}}=\lVert x^*\rVert$ for every $x^*\in E^*$.

  \ref{L5.1v}. It is clear that $\delta_x\in E^{**}\cap I[E]\cap H_{w^*}[E]$ for every $x\in E$, and Lemma~\ref{L5.1}\ref{lemma:Mw*0i} and~\ref{lemma:Mw*0ii} show that $I[E]\subseteq M_{w^*,0}^p[E]$. Hence we can complete the proof by showing that if $x^{**}\in E^{**}$ belongs to~$H_{w^*}[E]$ or~$M_{w^*,0}^p[E]$, then $x^{**}=\delta_x$ for some \mbox{$x\in E$}. 
  In both cases $x^{**}{\upharpoonright_{B_{E^*}}}$ is weak*-continuous at~$0$, so
  \mbox{$x^{**}{\upharpoonright_{B_{E^*}}^{\,-1}}(\{0\}) = \ker x^{**}\cap B_{E^*}$} is weak*-closed. This implies that 
  $x^{**}=\delta_x$ for some $x\in E$  
  by an application of the Krein--Smulian Theorem (see, \emph{e.g.,} \cite[Corollary~3.94]{fab-ultimo}).  
\end{proof}

\begin{cor}\label{C5.5}
  Let $E$ be a non-reflexive Banach space and $1\le p<\infty$. Then  \[ H^p[E]\nsubseteq M_{w^*,0}^p[E]\cup H_{w^*}[E]. \]
\end{cor}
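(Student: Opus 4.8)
The goal is to show that for a non-reflexive Banach space~$E$, there exists a function in~$H^p[E]$ that lies neither in~$M_{w^*,0}^p[E]$ nor in~$H_{w^*}[E]$. The natural strategy is to exploit non-reflexivity to produce an element of the bidual that sits in~$H^p[E]$ but escapes both sublattices, and then, if necessary, to modify it so as to simultaneously avoid both.

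My plan is to use Lemma~\ref{L5.4}. Since~$E$ is non-reflexive, there exists $x^{**}\in E^{**}\setminus\{\delta_x : x\in E\}$. By Lemma~\ref{L5.4}\ref{L5.1iv} we have $x^{**}\in H^p[E]$ with $\lVert x^{**}\rVert_{\fbl^p[E]} = \lVert x^{**}\rVert<\infty$, so~$x^{**}$ is a legitimate candidate. By Lemma~\ref{L5.4}\ref{L5.1v}, since $x^{**}\neq\delta_x$ for every $x\in E$, we have $x^{**}\notin H_{w^*}[E]$ and $x^{**}\notin M_{w^*,0}^p[E]$. This single function therefore already lies outside each of the two sets individually.

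The subtlety is that the conclusion asserts $H^p[E]\nsubseteq M_{w^*,0}^p[E]\cup H_{w^*}[E]$, that is, I need a single function avoiding the \emph{union}. But this is exactly what~$x^{**}$ delivers: to show $x^{**}\notin M_{w^*,0}^p[E]\cup H_{w^*}[E]$, it suffices to verify $x^{**}\notin M_{w^*,0}^p[E]$ \emph{and} $x^{**}\notin H_{w^*}[E]$ separately, both of which follow at once from Lemma~\ref{L5.4}\ref{L5.1v}. Indeed, that lemma states that the intersection of~$E^{**}$ with \emph{either} of these two sets equals $\{\delta_x : x\in E\}$, and~$x^{**}$ is not of this form. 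So there is no real obstacle here; the hierarchy~\eqref{E:inclusions1} showing $H_{w^*}^p[E]\subseteq M_{w^*,0}^p[E]$ is not even needed, since both memberships are ruled out directly.

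The main work is thus entirely absorbed into Lemma~\ref{L5.4}, whose proof rests on the Krein--Smulian Theorem: if $x^{**}{\upharpoonright_{B_{E^*}}}$ is weak*-continuous at~$0$ (which holds whenever $x^{**}$ belongs to~$H_{w^*}[E]$ or to~$M_{w^*,0}^p[E]$), then $\ker x^{**}\cap B_{E^*}$ is weak*-closed, forcing $x^{**}$ to be weak*-continuous and hence of the form~$\delta_x$. Granting that lemma, the corollary is a one-line deduction, so I would simply invoke non-reflexivity to extract $x^{**}\in E^{**}\setminus\{\delta_x : x\in E\}$ and cite Lemma~\ref{L5.4} to conclude.
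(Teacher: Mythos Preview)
Your proposal is correct and follows exactly the same approach as the paper: the paper's proof is the single sentence ``Lemma~\ref{L5.4} shows that $E^{**}\setminus\{\delta_x:x\in E\}\subseteq H^p[E]\setminus (M_{w^*,0}^p[E]\cup H_{w^*}[E])$'', which is precisely your argument compressed into one line.
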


\begin{proof}
  Lemma~\ref{L5.4} shows that $E^{**}\setminus\{\delta_x:x\in E\}\subseteq H^p[E]\setminus (M_{w^*,0}^p[E]\cup H_{w^*}[E])$.
\end{proof}

Before stating our next result, we recall the following classical characterization of weak se\-quen\-tial convergence in $C(K)$-spaces (see \cite[Theorem~1.1]{DS} or \cite[Corollary~IV.6.4]{NDJTS}), which will be required in the proof.
\begin{thm}\label{DomConvThm}
Let $K$ be a compact Hausdorff space. A  sequence $(f_n)_{n\in\N}$ in~$C(K)$ converges weakly to a function $f\in C(K)$ if and only if $\sup_{n\in\N}\lVert f_n\rVert_\infty<\infty$ and $f_n(x)\to f(x)$ as $n\to\infty$ for every $x\in K$.
\end{thm}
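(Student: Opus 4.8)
The plan is to use the Riesz representation theorem to identify the dual of~$C(K)$ with the space~$M(K)$ of regular Borel signed measures on~$K$, and then reduce each direction to a standard convergence argument. Throughout, weak convergence $f_n\to f$ means that $\int_K f_n\,d\mu\to\int_K f\,d\mu$ for every $\mu\in M(K)$.

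For the forward implication, suppose that $(f_n)$ converges weakly to~$f$. A weakly convergent sequence is automatically bounded, by the uniform boundedness principle applied to the $f_n$ viewed, via the canonical embedding, as elements of the bidual~$C(K)^{**}$; since this embedding is isometric, we obtain $\sup_{n\in\N}\lVert f_n\rVert_\infty<\infty$. Moreover, for each fixed $x\in K$, the point evaluation $g\mapsto g(x)$ is a bounded linear functional on~$C(K)$ — it corresponds to the Dirac measure at~$x$ — so testing weak convergence against this particular functional yields $f_n(x)\to f(x)$ for every $x\in K$.

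For the reverse implication, suppose that $M:=\sup_{n\in\N}\lVert f_n\rVert_\infty<\infty$ and that $f_n(x)\to f(x)$ for every $x\in K$. Fix $\mu\in M(K)$, and let $\lvert\mu\rvert$ denote its total variation, which is a finite positive regular Borel measure. Since $\lvert f_n\rvert\le M$ pointwise and the constant function~$M$ is $\lvert\mu\rvert$-integrable, the dominated convergence theorem gives $\int_K f_n\,d\mu\to\int_K f\,d\mu$. As $\mu\in M(K)$ was arbitrary, this establishes that $f_n\to f$ weakly.

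The argument is essentially routine once the identification $C(K)^*=M(K)$ is in hand; the only point requiring a little care is that the reverse direction must be verified against \emph{every} signed measure, not merely the point masses that suffice for the forward direction, and it is precisely the uniform bound~$M$ that supplies the dominating function needed to invoke the dominated convergence theorem. The main conceptual input is therefore the Riesz representation theorem, after which no genuine obstacle remains.
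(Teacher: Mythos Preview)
Your proof is correct and is the standard argument for this classical result. Note, however, that the paper does not actually prove this theorem: it is stated with references to \cite[Theorem~1.1]{DS} and \cite[Corollary~IV.6.4]{NDJTS} and then invoked as a known tool in the proof of Lemma~\ref{lem:reflexive}. Your argument via the Riesz representation theorem and dominated convergence is precisely the textbook proof one finds in those references, so there is nothing to compare.
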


\begin{lem}\label{lem:reflexive}
Let $1\le p<\infty$, let~$E$ be a reflexive Banach space, and equip its unit ball~$B_E$ with the weak topology. Then the  map $\psi_p\colon B_{E^*}\rightarrow C(B_E)$ defined by 
\begin{equation*}
\psi_p(x^*)(x) = \lvert x^*(x)\rvert^p\qquad (x^*\in B_{E^*},\, x\in B_E) \end{equation*} 
is continuous with respect to the weak topologies on $B_{E^*}$ and $C(B_E)$, respectively.
\end{lem}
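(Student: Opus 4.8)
The goal is to show that $\psi_p\colon B_{E^*}\to C(B_E)$ is continuous when $B_{E^*}$ carries the weak topology (which, by reflexivity, equals the weak* topology) and $C(B_E)$ carries its weak topology. Since $E$ is reflexive, $B_{E^*}$ is weakly compact, and the weak and weak* topologies on $E^*$ coincide; similarly $B_E$ equipped with the weak topology is a compact Hausdorff space, so that $C(B_E)$ is genuinely a space of continuous functions on a compact Hausdorff space and Theorem~\ref{DomConvThm} applies.

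The plan is to reduce weak continuity of $\psi_p$ to a pointwise (sequential) statement that can be handled by the dominated-convergence characterization of weak convergence in $C(K)$ recalled in Theorem~\ref{DomConvThm}. First I would observe that because the weak topology on the weakly compact set $B_{E^*}$ is metrizable whenever $E^*$ is separable, one would like to argue sequentially; to avoid separability hypotheses, the cleaner route is to prove continuity at an arbitrary point $x_0^*\in B_{E^*}$ by checking that every net converging weakly to $x_0^*$ has a subnet whose $\psi_p$-images converge weakly to $\psi_p(x_0^*)$, but in fact it is enough to verify sequential continuity combined with the fact that a map from a compact space into a space with a suitable topology is continuous once it is continuous along nets. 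Concretely, I would take a net $(x_\alpha^*)$ in $B_{E^*}$ converging weakly (equivalently weak*) to $x_0^*$ and aim to show $\psi_p(x_\alpha^*)\to\psi_p(x_0^*)$ weakly in $C(B_E)$.

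The main verification splits into the two hypotheses of Theorem~\ref{DomConvThm}. For the uniform boundedness condition, note that $\lVert\psi_p(x^*)\rVert_\infty=\sup_{x\in B_E}\lvert x^*(x)\rvert^p=\lVert x^*\rVert^p\le 1$ for every $x^*\in B_{E^*}$, so the images automatically have norm at most~$1$; this is immediate and requires no convergence argument. For the pointwise convergence condition, fix $x\in B_E$. Since $(x_\alpha^*)$ converges weak* to $x_0^*$, evaluation at the fixed element $x\in E$ gives $x_\alpha^*(x)\to x_0^*(x)$ in~$\R$, and hence $\lvert x_\alpha^*(x)\rvert^p\to\lvert x_0^*(x)\rvert^p$ by continuity of $t\mapsto\lvert t\rvert^p$ on~$\R$. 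That is exactly $\psi_p(x_\alpha^*)(x)\to\psi_p(x_0^*)(x)$ for each $x\in B_E$. Thus both conditions of Theorem~\ref{DomConvThm} hold, giving weak convergence $\psi_p(x_\alpha^*)\to\psi_p(x_0^*)$ in~$C(B_E)$.

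The one genuine subtlety is that Theorem~\ref{DomConvThm} is stated for sequences, whereas weak continuity on $B_{E^*}$ should be checked against nets. I expect this to be the main obstacle, and I would resolve it as follows: a function between topological spaces is continuous if and only if it is continuous at each point, and continuity of $\psi_p$ at $x_0^*$ amounts to showing that for each weak neighbourhood~$V$ of $\psi_p(x_0^*)$ in $C(B_E)$ the preimage $\psi_p^{-1}(V)$ is a weak neighbourhood of $x_0^*$. Because $B_{E^*}$ with the weak topology is compact, it suffices to rule out that $\psi_p$ sends some net converging to $x_0^*$ outside~$V$; passing to a subnet and invoking compactness, the pointwise-plus-bounded criterion above (which is purely a net statement once phrased via $x_\alpha^*(x)\to x_0^*(x)$ and the uniform bound $\lVert\psi_p(x_\alpha^*)\rVert_\infty\le1$) forces weak convergence of the images, contradicting that they avoid~$V$. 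Alternatively, one can bypass the sequence/net issue entirely by recalling that the relevant characterization of weak convergence in $C(K)$ holds for nets as well under the stated boundedness hypothesis, so the net argument of the previous paragraph applies directly. Either way, the essential content is the elementary observation that weak* convergence is precisely pointwise convergence on $E$, combined with the uniform norm bound~$\lVert\psi_p(x^*)\rVert_\infty\le1$.
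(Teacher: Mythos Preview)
Your identification of the sequence/net issue as the main obstacle is correct, but neither of your proposed resolutions works. The claim that the characterization in Theorem~\ref{DomConvThm} extends to nets is \emph{false}: take $K=[0,1]$ and, for each finite subset $F\subset[0,1]$, a continuous function $g_F\colon[0,1]\to[0,1]$ which vanishes on~$F$ but satisfies $\int_0^1 g_F\,dt\ge\tfrac12$ (possible because~$F$ has measure zero). Ordered by inclusion of~$F$, the net $(g_F)$ is uniformly bounded and converges pointwise to~$0$, yet $\langle g_F,\mu\rangle\ge\tfrac12$ for Lebesgue measure~$\mu$, so it does not converge weakly. Your ``compactness'' alternative is also circular: compactness of~$B_{E^*}$ gives nothing new about a net that already converges there, and you have no a~priori weak compactness of the image in~$C(B_E)$ to extract a convergent subnet of $(\psi_p(x_\alpha^*))$; the line ``the pointwise-plus-bounded criterion above\ldots\ forces weak convergence of the images'' again invokes the false net version.

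The paper closes this gap differently. It first proves exactly what you prove---that $\psi_p$ is weakly \emph{sequentially} continuous, via Theorem~\ref{DomConvThm}---and then upgrades this to weak continuity by a closed-set argument: for a weakly closed $C\subseteq C(B_E)$, one must show $\psi_p^{-1}(C)$ is weakly closed in~$B_{E^*}$. Since $B_{E^*}$ is weakly compact (reflexivity), the Eberlein--Smulian theorem reduces this to showing $\psi_p^{-1}(C)$ is weakly \emph{sequentially} compact, which follows from the weak sequential compactness of~$B_{E^*}$ together with the sequential continuity of~$\psi_p$ and the weak closedness of~$C$. The essential missing ingredient in your argument is this appeal to Eberlein--Smulian on the \emph{domain} side, which is precisely what lets one work with sequences throughout.
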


\begin{proof}
We begin by showing that $\psi_p$ is weakly sequentially continuous. Take a sequence $(x_n^*)_{n\in\mathbb N}$ in~$B_{E^*}$ which converges weakly to some~$x^*\in B_{E^*}$. Then $\sup_{n\in\N}\lVert\psi_p(x_n^*)\rVert_\infty\le 1$ and \[ \psi_p(x^*_n)(x)= \lvert x_n^*(x)\rvert^p\underset{n\to\infty}\longrightarrow \lvert x^*(x)\rvert^p = \psi_p(x^*)\qquad (x\in B_E), \]
so Theorem~\ref{DomConvThm} shows that $(\psi_p(x_n^*))_{n\in\N}$ converges weakly to~$\psi_p(x^*)$. 

Now, in order to complete the proof, take a weakly closed subset $C$ of $C(B_E)$, and let us prove that $\psi_p^{-1}(C)$ is weakly closed in $B_{E^*}$. By the reflexivity of $E$ and the Eber\-lein--Smulian Theorem, this is equivalent to showing that $\psi_p^{-1}(C)$ is weakly sequentially compact. 
To verify this, take a sequence $(x_n^*)_{n\in\mathbb N}$ in $\psi_p^{-1}(C)$. Due to the weak sequential compactness of $B_{E^*}$, $(x_n^*)_{n\in\N}$ has a subsequence $(x_{n_j}^*)_{j\in\N}$ which converges weakly to some $x^*\in B_{E^*}$. The first part of the proof implies that $(\psi_p(x_{n_j}^*))_{j\in\N}$ converges weakly to $\psi_p(x^*)$. Since $\psi_p(x_{n_j}^*)$ belongs to the weakly closed set~$C$ for each $j\in\N$, we see that $\psi_p(x^*)\in C$. In conclusion, we have shown that $(x_n^*)_{n\in\mathbb N}$ has a subsequence which converges weakly to $x^*\in\psi_p^{-1}(C)$, and the result follows.
\end{proof}

Let~$E$ be a Banach space, equip~$B_{E^{**}}$ with the relative weak* topology, and take $\mu\in C(B_{E^{**}})^*_+$ (so in other words~$\mu$ is a positive Radon measure on~$B_{E^{**}}$). Then, for every $1\le p<\infty$, we can define a function $f_\mu^p\colon E^*\to\R$ by
\begin{equation}\label{eq:defnfmu} f_\mu^p(x^*)=\biggl(\int_{B_{E^{**}}} |x^{**}(x^*)|^p\, d\mu(x^{**})\biggr)^{\frac1p}\qquad (x^*\in E^*). \end{equation}
This construction is useful for our purposes because $f_\mu^p \in H^p[E]$ with $\lVert f_\mu^p\rVert_{\fbl^p[E]}\le \lVert \mu\rVert$; this was shown in \cite[Proposition~7.6]{JLTTT} in the case where $\lVert\mu\rVert=1$; the general case follows by scaling.

\begin{thm}\label{t:ref}
Let $1\le p<\infty$, and let~$E$ be a Banach space. The following con\-di\-tions are equivalent:
\begin{enumerate}[label={\normalfont{(\alph*)}}]
    \item\label{ref:a} $E$ is reflexive.
    \item\label{ref:a'} $f_\mu^p\in H_{w^*}[E]$ for every $\mu\in C(B_{E^{**}})^*_+$.
    \item\label{ref:b} $H^p[E] = J_{w^*}^p[E]$.
    \item\label{ref:c} $H^p[E] = \overline{J_{w^*}^p[E]}$.
    \item\label{ref:d} $f_\mu^p\in M_{w^*,0}^p[E]$ for every $\mu\in C(B_{E^{**}})^*_+$.
\end{enumerate}
\end{thm}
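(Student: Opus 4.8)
The plan is to prove the cyclic chain of implications
\[ \text{\ref{ref:a}}\Rightarrow\text{\ref{ref:a'}}\Rightarrow\text{\ref{ref:b}}\Rightarrow\text{\ref{ref:c}}\Rightarrow\text{\ref{ref:d}}\Rightarrow\text{\ref{ref:a}}, \]
exploiting the inclusions $H_{w^*}^p[E]\subseteq J_{w^*}^p[E]\subseteq\overline{J_{w^*}^p[E]}\subseteq M_{w^*,0}^p[E]\subseteq H^p[E]$ from~\eqref{E:inclusions1} and Lemma~\ref{L5.1}\ref{lemma:Mw*0ii} to make several of these steps nearly automatic. The implications \ref{ref:b}$\Rightarrow$\ref{ref:c}$\Rightarrow$\ref{ref:d} should be immediate: \ref{ref:b}$\Rightarrow$\ref{ref:c} because $J_{w^*}^p[E]\subseteq\overline{J_{w^*}^p[E]}\subseteq H^p[E]$, and \ref{ref:c}$\Rightarrow$\ref{ref:d} because $\overline{J_{w^*}^p[E]}\subseteq M_{w^*,0}^p[E]$ always, so every $f_\mu^p\in H^p[E]=\overline{J_{w^*}^p[E]}$ lands in $M_{w^*,0}^p[E]$. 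The interesting content is therefore concentrated in the first and last implications.

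For \ref{ref:a}$\Rightarrow$\ref{ref:a'}, I would fix $\mu\in C(B_{E^{**}})^*_+$ and show $f_\mu^p$ restricts to a weak*-continuous function on $B_{E^*}$. Since $E$ is reflexive, $B_{E^{**}}=B_E$ with the weak* topology agreeing with the weak topology, so $f_\mu^p(x^*)=\bigl(\int_{B_E}|x^{**}(x^*)|^p\,d\mu\bigr)^{1/p}$ and the weak* topology on $B_{E^*}$ is the weak topology. The natural tool is Lemma~\ref{lem:reflexive}: the map $\psi_p\colon B_{E^*}\to C(B_E)$ sending $x^*\mapsto(x\mapsto|x^*(x)|^p)$ is weak-to-weak continuous. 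Then $f_\mu^p(x^*)^p=\int_{B_E}\psi_p(x^*)\,d\mu=\langle\psi_p(x^*),\mu\rangle$ is the composition of $\psi_p$ with the fixed weakly continuous functional $\mu\in C(B_E)^*$, hence weakly continuous on $B_{E^*}$; taking the $p$-th root preserves continuity (and respects positive homogeneity on the whole of $E^*$). Thus $f_\mu^p\in H_{w^*}[E]$, and since $f_\mu^p\in H^p[E]$ by the cited norm bound, in fact $f_\mu^p\in H_{w^*}^p[E]$.

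The step \ref{ref:a'}$\Rightarrow$\ref{ref:b} requires knowing that the functions $f_\mu^p$, as $\mu$ ranges over positive Radon measures, together with the ideal structure, generate all of $H^p[E]$. I expect the key fact to be that every $f\in H^p[E]_+$ is dominated by some $f_\mu^p$ (this is the representation-theoretic heart of the $\fbl^p$ construction, analogous to \cite[Proposition~7.6]{JLTTT}); granting this, if every $f_\mu^p$ lies in $H_{w^*}[E]$, then every positive $f\in H^p[E]$ is dominated in modulus by an element of $H_{w^*}^p[E]$, placing $f$ in the ideal $J_{w^*}^p[E]$, whence $H^p[E]=J_{w^*}^p[E]$. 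This domination input is the place I expect to lean hardest on earlier machinery, and is the main obstacle: establishing that an arbitrary positively homogeneous $f$ of finite $\fbl^p$-norm admits an integral majorant $f_\mu^p$.

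Finally, for \ref{ref:d}$\Rightarrow$\ref{ref:a} I would argue by contraposition using Corollary~\ref{C5.5}. If $E$ is not reflexive, pick $x^{**}\in E^{**}\setminus\{\delta_x:x\in E\}$; by Lemma~\ref{L5.4}\ref{L5.1v} this $x^{**}\in H^p[E]$ fails to lie in $M_{w^*,0}^p[E]$. The task is then to realize such a witness as some $f_\mu^p$, for instance by taking $\mu=\delta_{x^{**}}+\delta_{-x^{**}}$ scaled appropriately, or more simply $\mu$ the unit point mass at $x^{**}$, giving $f_\mu^p(x^*)=|x^{**}(x^*)|=|x^{**}|(x^*)$; since $|x^{**}|$ is not weak*-continuous at $0$ on $B_{E^*}$ either (as $x^{**}\notin\{\delta_x\}$ forces $\ker x^{**}\cap B_{E^*}$ to fail to be weak*-closed, by the Krein--Smulian argument of Lemma~\ref{L5.4}), we obtain $f_\mu^p\notin M_{w^*,0}^p[E]$, contradicting~\ref{ref:d}. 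The only delicate point here is confirming that the point-mass measure yields $|x^{**}|$ rather than $x^{**}$ and that non-continuity at $0$ transfers to the absolute value, which follows because positive homogeneity makes continuity at $0$ equivalent for $x^{**}$ and $|x^{**}|$.
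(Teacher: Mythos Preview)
Your proposal is correct and follows essentially the same route as the paper's proof: the same cyclic chain, the use of Lemma~\ref{lem:reflexive} for \ref{ref:a}$\Rightarrow$\ref{ref:a'}, the domination $|f|\le\|f\|_{\fbl^p[E]}\,f_\mu^p$ for \ref{ref:a'}$\Rightarrow$\ref{ref:b} (this is \cite[Proposition~7.7]{JLTTT}, not~7.6), and a Dirac mass at some $x^{**}\in B_{E^{**}}\setminus\{\delta_x:x\in E\}$ for \ref{ref:d}$\Rightarrow$\ref{ref:a}. The paper carries out \ref{ref:d}$\Rightarrow$\ref{ref:a} by an explicit net argument rather than invoking Lemma~\ref{L5.4}, but your reduction via ``continuity at~$0$ is the same for $x^{**}$ and $|x^{**}|$'' is equally valid.
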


\begin{proof}
\ref{ref:a}$\Rightarrow$\ref{ref:a'} Suppose that $E$ is reflexive, so that $E^{**}=E$ with the weak* and weak topologies coinciding, and take $\mu\in C(B_E)^*_+$. For $x^*\in B_{E^*}$, we have 
\[ f_\mu^p(x^*)^p = \int_{B_{E}} |x^{*}(x)|^p\, d\mu(x) =\langle \psi_p(x^*),\mu\rangle = (\mu\circ\psi_p)(x^*), \]
so that $(f_\mu^p{\upharpoonright_{B_{E^*}}})^p=\mu\circ\psi_p$, where $\psi_p$ is the map defined in Lemma~\ref{lem:reflexive}. Take a net $(x^*_\alpha)$ in $B_{E^*}$ which weak*-converges to~$x^*$. Then, by reflexivity, $(x^*_\alpha)$ converges weak\-ly to~$x^*$, so Lemma~\ref{lem:reflexive} implies that $(\psi_p(x^*_\alpha))$ converges weakly to $\psi_p(x^*)$, which means that $(\langle \psi_p(x_\alpha^*),\mu\rangle)=(f_\mu^p(x^*_\alpha)^p)$ converges to $\langle \psi_p(x^*),\mu\rangle = f_\mu^p(x^*)^p$. This shows that $f_\mu^p\in H_{w^*}[E]$. 

\ref{ref:a'}$\Rightarrow$\ref{ref:b}. Suppose that $f_\mu^p\in H_{w^*}[E]$ for every $\mu\in C(B_{E^{**}})^*_+$, and recall that $f_\mu^p$ always belongs to~$H^p[E]$. For every $f\in H^p[E]$, \cite[Proposition~7.7]{JLTTT} shows that we can find a probability measure $\mu\in C(B_{E^{**}})^*_+$ such that
\[ |f(x^*)|\leq \lVert f\rVert_{\fbl^p[E]}\,f_\mu^p(x^*)\qquad (x^*\in E^*), \] and therefore $f\in J_{w^*}^p[E]$.

\ref{ref:b}$\Rightarrow$\ref{ref:c}  is trivial.

\ref{ref:c}$\Rightarrow$\ref{ref:d}. This follows from Lemma~\ref{L5.1}\ref{lemma:Mw*0ii} because  
$f_\mu^p\in H^p[E]$ for every $\mu\in C(B_{E^{**}})^*_+$.

Finally, we prove the implication \ref{ref:d}$\Rightarrow$\ref{ref:a} by contraposition. Suppose that $E$ is non-reflexive. Then $B_{E^*}$ contains a net $(x^*_\alpha)$ which weak*-converges to~0, but does not converge weakly to~0. 
Take $x_0^{**}\in E^{**}$ such that $(x_0^{**}(x^*_\alpha))$ does not converge to~0, 
and consider the Dirac measure $\mu= \delta_{x^{**}_0}\in C(B_{E^{**}})_+^*$; it satisfies 
\begin{align*}
    f_\mu^p(x_\alpha^*) &= \biggl(\int_{B_{E^{**}}} |x^{**}(x^*_\alpha)|^p\,d\mu(x^{**})\biggr)^{\frac1p} = \bigl|x^{**}_0(x_\alpha^*)\bigr|\not\rightarrow 0 = f_\mu^p(0).
\end{align*}
This shows that  $f_{\mu}^p{\upharpoonright_{B_{E^*}}}$ fails to be weak*-continuous at $0$, so $f_\mu^p\notin M_{w^*,0}^p[E]$.
\end{proof}

\section{Lattice homomorphisms}\label{section:isomorphism}
\noindent
It is well known that for compact Hausdorff spaces~$K$ and~$L$, every lattice homo\-mor\-phism  $T\colon C(K)\rightarrow C(L)$ arises by combining a composition operator with point\-wise multi\-pli\-ca\-tion by a positive continuous function. (A precise statement of this result can for instance be found in \cite[Theorem~3.2.10]{MN}.) The aim of this section is to obtain an analogous representation for lattice homomorphisms between free Banach lattices and related lattices of positively homogeneous functions, and then study the properties and consequences of this representation.

We require a new quantity. Given $1\leq p<\infty$, Banach spaces $E$ and $F$, and a positively homogeneous map $\Phi\colon F^*\to E^*$, set
\begin{equation}\label{defn:Phi_p_norm}  \lVert \Phi\rVert_p=\sup\left\{\bigl\lVert \bigl(\Phi (y_j^*)\bigr)_{j=1}^m\bigr\rVert_{p,{\normalfont{\text{weak}}}} : m\in\mathbb N,\, (y_j^*)_{j=1}^m\subset F^*,\,\lVert (y_j^*)_{j=1}^m\rVert_{p,{\normalfont{\text{weak}}}}\leq1\right\}. 
\end{equation}
This definition can be viewed as a vector-valued version of the $\fbl^p$-norm~\eqref{Defn:FBLpNorm} in the precise sense that $\lVert \Phi\rVert_p=\lVert \Phi\rVert_{\fbl^p[F]}$ when $E=\R$. 
In the general case (where~$E$ need not be $1$-dimensional), it is clear that $\|\Phi\|_p<\infty$ precisely when~$\Phi$ maps weakly $p$-summable sequences in~$F^*$ to weakly $p$-summable sequences in~$E^*$. 

We begin by establishing some basic properties of the operator given by composition with a positively homogeneous map.

\begin{lem}\label{L:compositionOps} Let $\Phi\colon F^*\to E^*$ be a positively homogeneous map for some Banach spaces~$E$ and~$F$. 
\begin{enumerate}[label={\normalfont{(\roman*)}}]
\item\label{L:compositionOps:i} The composition operator 
\[ C_\Phi\colon f\mapsto f\circ\Phi \]
defines a lattice homomorphism $C_\Phi\colon H[E]\to H[F]$. 
\item\label{L:compositionOps:ii} Suppose that $\|\Phi\|_p<\infty$ for some $1\le p<\infty$.
Then $\Phi(B_{F^*})\subseteq \|\Phi\|_p B_{E^*}$, 
\begin{equation}\label{L:compositionOps:eq1} C_{\Phi}(H^p[E])\subseteq H^p[F], \end{equation}
and $C_\Phi$ is bounded with norm $\|C_{\Phi}\|=\|\Phi\|_p$ when viewed as a linear opera\-tor $(H^p[E],\|\cdot\|_{\fbl^p[E]})\to(H^p[F],\|\cdot\|_{\fbl^p[F]})$.

Suppose in addition that $\Phi{\upharpoonright_{B_{F^*}}}$ is weak*-to-weak* continuous. Then
\begin{equation}\label{L:compositionOps:eq2}
\begin{alignedat}{2} C_\Phi(H_{w^*}[E])&\subseteq H_{w^*}[F],\qquad & C_{\Phi}(H^p_{w^*}[E])&\subseteq H^p_{w^*}[F],\\  
C_{\Phi}(J^p_{w^*}[E])&\subseteq J^p_{w^*}[F],\qquad & C_{\Phi}(\overline{J^p_{w^*}[E]})&\subseteq \overline{J^p_{w^*}[F]},\\
C_{\Phi}(M^p_{w^*,0}[E])&\subseteq M^p_{w^*,0}[F].
\end{alignedat}
\end{equation}
\item\label{L:compositionOps:iii} The map $p\mapsto\lVert\Phi\rVert_p$ is decreasing; that is, $\lVert\Phi\rVert_q\le\lVert\Phi\rVert_p$ for $1\le p<q<\infty$.
\end{enumerate}
\end{lem}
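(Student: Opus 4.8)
The plan is to treat the three parts in turn, with \ref{L:compositionOps:i} and the bulk of \ref{L:compositionOps:ii} being formal consequences of the pointwise definition of the operations, and the genuinely substantive content concentrated in the norm identity and in \ref{L:compositionOps:iii}.

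For \ref{L:compositionOps:i}, everything reduces to the fact that the vector and lattice operations on $H[E]$ and $H[F]$ are computed pointwise. First I would check that $f\circ\Phi\in H[F]$ whenever $f\in H[E]$: for $\lambda\ge0$ and $y^*\in F^*$, positive homogeneity of~$\Phi$ gives $(f\circ\Phi)(\lambda y^*)=f(\lambda\Phi(y^*))=\lambda(f\circ\Phi)(y^*)$. Linearity of $C_\Phi$ and the identities $C_\Phi(f\vee g)=C_\Phi f\vee C_\Phi g$ and $C_\Phi(f\wedge g)=C_\Phi f\wedge C_\Phi g$ then follow by evaluating both sides at an arbitrary $y^*$, since $+$, $\vee$ and $\wedge$ act pointwise.

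For \ref{L:compositionOps:ii}, the ball inclusion comes from applying the definition~\eqref{defn:Phi_p_norm} to the one-term tuple $(y^*)$ with $y^*\in B_{F^*}$, using $\lVert(x^*)\rVert_{p,\mathrm{weak}}=\lVert x^*\rVert$, to get $\lVert\Phi(y^*)\rVert\le\lVert\Phi\rVert_p$. The upper bound $\lVert C_\Phi\rVert\le\lVert\Phi\rVert_p$ (which simultaneously yields~\eqref{L:compositionOps:eq1}) follows from the homogeneous form of~\eqref{Defn:FBLpNorm}, namely $\bigl(\sum_j|f(x_j^*)|^p\bigr)^{1/p}\le\lVert f\rVert_{\fbl^p[E]}\,\lVert(x_j^*)_j\rVert_{p,\mathrm{weak}}$, applied to $x_j^*=\Phi(y_j^*)$ together with $\lVert(\Phi(y_j^*))_j\rVert_{p,\mathrm{weak}}\le\lVert\Phi\rVert_p$. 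For the reverse inequality I would test $C_\Phi$ on the elements $\delta_x$ with $x\in B_E$, note that $(\delta_x\circ\Phi)(y^*)=\Phi(y^*)(x)$ and $\lVert\delta_x\rVert_{\fbl^p[E]}=\lVert x\rVert\le1$, and then swap the supremum over $x\in B_E$ with the supremum over test tuples, which recovers exactly~\eqref{defn:Phi_p_norm}. The five inclusions in~\eqref{L:compositionOps:eq2} are then a routine cascade. Weak*-to-weak* continuity of $\Phi\upharpoonright_{B_{F^*}}$ and $\Phi(B_{F^*})\subseteq\lVert\Phi\rVert_p B_{E^*}$ make $f\circ\Phi$ weak*-continuous on $B_{F^*}$ whenever $f\upharpoonright_{B_{E^*}}$ is (here $f$ is weak*-continuous on every ball $rB_{E^*}$ by homogeneity, so the composition of two continuous maps is continuous), giving the $H_{w^*}$ inclusion; intersecting with~\eqref{L:compositionOps:eq1} gives the $H_{w^*}^p$ one. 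Since $C_\Phi$ is a lattice homomorphism it is positive and satisfies $\lvert C_\Phi f\rvert=C_\Phi\lvert f\rvert$, so $\lvert f\rvert\le h\in H_{w^*}^p[E]$ forces $\lvert C_\Phi f\rvert\le C_\Phi h\in H_{w^*}^p[F]$, handling $J_{w^*}^p$; the closed-ideal case follows by applying the bounded operator $C_\Phi$ to norm limits. For $M_{w^*,0}^p$ I would take a weak*-null net $(y_\alpha^*)$ in $B_{F^*}$, use $\Phi(0)=0$ and continuity to get $\Phi(y_\alpha^*)\to0$ weak*, rescale into $B_{E^*}$, and invoke weak*-continuity of $f$ at~$0$.

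Part \ref{L:compositionOps:iii} is where the real work lies. The key preliminary observation, obtained by swapping the supremum over test tuples with the supremum defining $\lVert\,\cdot\,\rVert_{r,\mathrm{weak}}$, is that $\lVert\Phi\rVert_r=\sup_{x\in B_E}\lVert C_\Phi\delta_x\rVert_{\fbl^r[F]}$ for each $r\in[1,\infty)$; this reduces the claim to the scalar inequality $\lVert g\rVert_{\fbl^q[F]}\le\lVert g\rVert_{\fbl^p[F]}$ for $g\in H[F]$ (the case $E=\R$). To prove the scalar inequality I would fix a tuple $(y_j^*)_{j=1}^m$ with $\lVert(y_j^*)_j\rVert_{q,\mathrm{weak}}\le1$, set $a_j=\lvert g(y_j^*)\rvert$, and \emph{reweight}: with $\lambda_j=a_j^{(q-p)/p}/C$ and $C=\bigl(\sum_j a_j^q\bigr)^{(q-p)/(qp)}$, positive homogeneity gives $\sum_j\lvert g(\lambda_j y_j^*)\rvert^p=\bigl(\sum_j a_j^q\bigr)^{p/q}$, while Hölder's inequality with conjugate exponents $q/(q-p)$ and $q/p$, together with $\lVert(y_j^*)_j\rVert_{q,\mathrm{weak}}\le1$, yields $\lVert(\lambda_j y_j^*)_j\rVert_{p,\mathrm{weak}}\le1$. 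Feeding the reweighted tuple into the definition of $\lVert g\rVert_{\fbl^p[F]}$ then gives $\bigl(\sum_j a_j^q\bigr)^{1/q}\le\lVert g\rVert_{\fbl^p[F]}$, and taking the supremum over admissible tuples finishes. The main obstacle is precisely this reweighting: one must choose the $\lambda_j$ so that the $q$-summing constraint on $(y_j^*)$ is converted, via Hölder, into a $p$-summing constraint on $(\lambda_j y_j^*)$, with the exponents arranged so that the resulting power of $\sum_j a_j^q$ collapses to the $q$-norm. Everything else in the lemma is bookkeeping resting on the pointwise nature of the lattice operations.
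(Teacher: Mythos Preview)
Your proposal is correct and, for parts~\ref{L:compositionOps:i} and~\ref{L:compositionOps:ii}, matches the paper's proof essentially line for line: the pointwise nature of the lattice operations, the one-term tuple for the ball inclusion, the direct computation for $\lVert C_\Phi\rVert\le\lVert\Phi\rVert_p$, testing on~$\delta_x$ for the reverse, and the same cascade for the five inclusions in~\eqref{L:compositionOps:eq2}.

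For part~\ref{L:compositionOps:iii} there is a mild structural difference worth noting. You first reduce to the scalar case via the identity $\lVert\Phi\rVert_r=\sup_{x\in B_E}\lVert C_\Phi\delta_x\rVert_{\fbl^r[F]}$ and then prove the scalar inequality $\lVert g\rVert_{\fbl^q[F]}\le\lVert g\rVert_{\fbl^p[F]}$ by a \emph{normalized} reweighting $\lambda_j=a_j^{(q-p)/p}/C$, arranged so that $\lVert(\lambda_jy_j^*)\rVert_{p,\mathrm{weak}}\le1$ holds exactly. The paper does not make this reduction: it fixes $x\in B_E$ and works directly with the vector-valued~$\Phi$, using the \emph{unnormalized} weights $\lambda_j=\lvert\langle x,\Phi(y_j^*)\rangle\rvert^{q/r}$ (with $r=pq/(q-p)$, so $q/r=(q-p)/p$, the same exponent you use) and then rearranges the resulting inequality $\sum_j\lambda_j^r\le\lVert\Phi\rVert_p^p\bigl(\sum_j\lambda_j^r\bigr)^{p/r}$. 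The H\"older step is identical in both. Your reduction is arguably cleaner and isolates the content as the monotonicity of $p\mapsto\lVert\,\cdot\,\rVert_{\fbl^p}$ on scalar functions (a fact the paper itself invokes later, citing \cite[Proposition~7.3]{JLTTT}); the paper's direct argument avoids the extra sup-swapping step. Neither approach offers a real advantage over the other.
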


\begin{proof} 
\ref{L:compositionOps:i}. The composite map $f\circ\Phi$ is clearly positively homogeneous when~$f$ and~$\Phi$ are, so $C_\Phi(H[E])\subseteq H[F]$, and $C_\Phi$ is a lattice homomorphism because the vector lattice operations are defined coordinatewise.

\ref{L:compositionOps:ii}. Suppose that $\lVert\Phi\rVert_p<\infty$. Then~$\lVert \Phi(y^*)\rVert\le \|\Phi\|_p$ for every $y^*\in B_{F^*}$ because $\lVert z^*\rVert_{p,{\normalfont{\text{weak}}}} = \lVert z^*\rVert$ for every functional~$z^*$ (in either~$E^*$ or~$F^*)$, and there\-fore $\Phi(B_{F^*})\subseteq \|\Phi\|_p B_{E^*}$. Furthermore, working straight from the definitions~\eqref{Defn:FBLpNorm} and~\eqref{defn:Phi_p_norm}, we obtain
\begin{multline*}
\lVert C_\Phi (f)\rVert_{\fbl^p[F]}=\sup\biggl\{\Bigl(\sum_{j=1}^m\lvert f(\Phi y_j^*)\rvert^p\Bigr)^{\frac1p} : m\in\mathbb N,\, (y_j^*)_{j=1}^m\subset F^*,\biggr.\\[-3mm] \biggl.\lVert (y_j^*)_{j=1}^m\rVert_{p,{\normalfont{\text{weak}}}}\leq1\biggr\}\leq\|\Phi\|_p\, \|f\|_{\fbl^p[E]}  
\end{multline*}
for every $f\in H^p[E]$. This shows that~$C_{\Phi}$ maps~$H^p[E]$ into~$H^p[F]$ and $\|C_\Phi\|\leq \|\Phi\|_p$. On the other hand, given $(y_j^*)_{j=1}^m\subset F^*$ for some $m\in\N$ and $x\in B_E$, we have
\begin{align*}
\Bigl(\sum_{j=1}^m |(\Phi y_j^*)(x)|^p\Bigr)^{\frac{1}{p}}&=\Bigl(\sum_{j=1}^m |\delta_x(\Phi y_j^*)|^p\Bigr)^{\frac{1}{p}}=\Bigl(\sum_{j=1}^m |(C_\Phi \delta_x) (y_j^*)|^p\Bigr)^{\frac{1}{p}}\\
&\leq\|C_{\Phi} \delta_x\|_{\fbl^p[F]}\, \lVert (y_j^*)_{j=1}^m\rVert_{p,{\normalfont{\text{weak}}}}\leq\|C_{\Phi}\|\, \lVert (y_j^*)_{j=1}^m\rVert_{p,{\normalfont{\text{weak}}}},
\end{align*}
which shows that $\|\Phi\|_p\leq \|C_{\Phi}\|$, and therefore $\|C_{\Phi}\|=\|\Phi\|_p$.

Now suppose that $\Phi{\upharpoonright_{B_{F^*}}}$ is weak*-to-weak* continuous (as well as $\lVert\Phi\rVert_p<\infty$). Since $\Phi(B_{F^*})\subseteq \|\Phi\|_p B_{E^*}$ and the functions in~$H_{w^*}[E]$ are positively homo\-ge\-neous, we see that~$C_\Phi$ maps~$H_{w^*}[E]$ into~$H_{w^*}[F]$ and~$M^p_{w^*,0}[E]$ into~$M^p_{w^*,0}[F]$. 
The inclusion $C_{\Phi}(H^p_{w^*}[E])\subseteq H^p_{w^*}[F]$ follows immediately from the inclusions $C_\Phi(H_{w^*}[E])\subseteq H_{w^*}[F]$ and $C_{\Phi}(H^p[E])\subseteq H^p[F]$.

Finally, for each $f\in J^p_{w^*}[E]$, we can find $g\in H^p_{w^*}[E]$ such that $|f|\leq g$. Then we have $|C_{\Phi}(f)|\leq C_{\Phi}(g)\in H^p_{w^*}[F]$ by the previous inclusion, and con\-se\-quent\-ly $C_{\Phi}(f)\in J^p_{w^*}[F]$. This implies that $C_{\Phi}(\overline{J^p_{w^*}[E]})\subseteq \overline{J^p_{w^*}[F]}$ because~$C_\Phi$ is con\-ti\-nuous with respect to the $\fbl^p$-norm, as we have shown above.

\ref{L:compositionOps:iii}. This proof is similar to the proof of \cite[Proposition~7.3]{JLTTT}. We may suppose that $\lVert\Phi\rVert_p<\infty$, as otherwise there is nothing to prove. Take $x\in B_E$ and $(y_j^*)_{j=1}^m\subset F^*$ with $\lVert(y_j^*)_{j=1}^m\rVert_{q,\text{weak}}\le 1$ for some $m\in\N$, and set $\lambda_j = \lvert\langle x,\Phi(y_j^*)\rangle\rvert^{q/r}\ge 0$ for $1\le j\le m$, where $r = pq/(q-p)\in(p,\infty)$. Then $(1+r/q)p=r$, which together with the positive homogeneity of~$\Phi$ implies that
\begin{align}\label{eq:ptoq}
\sum_{j=1}^m\lambda_j^r &= \sum_{j=1}^m\bigl(\lambda_j\lvert\langle x,\Phi(y_j^*)\rangle\rvert\bigr)^p = \sum_{j=1}^m\lvert\langle x,\Phi(\lambda_jy_j^*)\rangle\rvert^p\notag\\ 
&\le \bigl\lVert\bigl(\Phi(\lambda_jy_j^*)\bigr)_{j=1}^m\bigr\rVert_{p,\text{weak}}^p\le \lVert\Phi\rVert_p^p\, \lVert(\lambda_jy_j^*)_{j=1}^m\rVert_{p,\text{weak}}^p.
\end{align}
The choice of~$r$ means that the conjugate exponent of~$r/p\in(1,\infty)$ is~$q/p$. Hence, for
$y\in B_F$, H\"{o}lder's inequality gives
\begin{align*}
\sum_{j=1}^m \lvert\langle y, \lambda_jy_j^*\rangle\rvert^p &= \sum_{j=1}^m \lambda_j^p\lvert\langle y,y_j^*\rangle\rvert^p\le \Bigl(\sum_{j=1}^m \lambda_j^r\Bigr)^{\frac{p}{r}} \Bigl(\sum_{j=1}^m\lvert\langle y,y_j^*\rangle\rvert^q\Bigr)^{\frac{p}{q}}\leq \Bigl(\sum_{j=1}^m\lambda_j^r\Bigr)^{\frac{p}{r}}.
\end{align*}
This shows that $\lVert(\lambda_jy_j^*)_{j=1}^m\rVert_{p,\text{weak}}^p\le \bigl(\sum_{j=1}^m\lambda_j^r\bigr)^{p/r}$, so by~\eqref{eq:ptoq}, we have \[ \sum_{j=1}^m\lambda_j^r\le \lVert\Phi\rVert_p^p\,\Bigl(\sum_{j=1}^m\lambda_j^r\Bigr)^{\frac{p}{r}}. \]  Rearranging this inequality and using once more that $1-p/r=p/q$, we obtain
\[ \lVert\Phi\rVert_p^p\ge \Bigl(\sum_{j=1}^m\lambda_j^r\Bigr)^{\frac{p}{q}} = \Bigl(\sum_{j=1}^m\lvert\langle x,\Phi(y_j^*)\rangle\rvert^q\Bigr)^{\frac{p}{q}}. \]
Now the conclusion follows by taking the $p^{\text{th}}$ root and the supremum over~$x$ and $(y_j^*)_{j=1}^m$.
\end{proof}

\begin{example}\label{Ex:FBLnotPhiInvariant} 
Identify~$\ell_\infty$ with~$\ell_1^*$, as usual. By \cite[Remark 10.6]{OTTT}, there is a positively ho\-mo\-ge\-neous map $\Phi\colon\ell_\infty\rightarrow \ell_\infty$ satisfying the hypotheses of Lemma~\ref{L:compositionOps}\ref{L:compositionOps:ii} for $p=1$, that is, \mbox{$\lVert\Phi\rVert_1<\infty$}  and $\Phi{\upharpoonright_{B_{\ell_\infty}}}$ is weak*-to-weak* continuous, but the corresponding composition operator~$C_\Phi$ does not map~$\fbl^1[\ell_1]$ into itself. 

We can now provide a much more general version of this result. Take $1\le p<\infty$, and let~$E$ and~$F$ be Banach spaces, where~$E$ is non-zero and~$F$ admits an in\-finite-di\-men\-sional, separable quotient space. By Theorem~\ref{thm:fbl_neq_clIw_p}, we can find a function \mbox{$f\in H_{w^*}^p[F]\setminus\fbl^p[F]$}. (In fact, we can take $f\in I_{w^*}[F]\setminus\fbl^p[F]$, but this will not help the following argument.) Choose $x_0^*\in E^*\setminus\{0\}$, and define a map $\Phi\colon F^*\rightarrow E^*$ by $\Phi(y^*)=f(y^*)x_0^*$ for $y^*\in F^*$. Since $f\in H_{w^*}^p[F]$, it is straightforward to check that~$\Phi$ is positively homo\-ge\-neous, $\Phi{\upharpoonright_{B_{F^*}}}$ is weak*-to-weak* continuous, and $\lVert\Phi\rVert_p = \lVert f\rVert_{\fbl^p[F]}\,\lVert x^*_0\rVert<\infty$.
However, if we pick $x_0\in E$ such that $x_0^*(x_0)=1$, then 
\[ C_\Phi(\delta_{x_0})(y^*)=(\delta_{x_0}\circ\Phi)(y^*) = \Phi(y^*)(x_0) =f(y^*)\qquad (y^*\in F^*), \]
so $C_\Phi(\delta_{x_0})=f\notin\fbl^p[F]$. Hence~$C_\Phi$ does not map~$\fbl^p[E]$ into~$\fbl^p[F]$.
\end{example}

The following question arises naturally from the inclusions~\eqref{L:compositionOps:eq1}--\eqref{L:compositionOps:eq2} and Example~\ref{Ex:FBLnotPhiInvariant}. We have not been able to answer it, partly due to the lack of progress on Question~\ref{Q5.8}.

\begin{question}\label{Q:Iw*Invariant} Let $E$ and $F$ be infinite-dimensional Banach spaces, and suppose that $\Phi\colon F^*\to E^*$ is a positively homogeneous map which satisfies $\lVert\Phi\rVert_p<\infty$ for some $1\le p<\infty$ and $\Phi{\upharpoonright_{B_{F^*}}}$ is weak*-to-weak* continuous. Is \[ C_\Phi(\overline{I_{w^*}[E]})\subseteq\overline{I_{w^*}[F]}\,? \] 
\end{question}

We shall now present our main representation theorem for lattice homomorphisms between free Banach lattices.
\begin{thm}\label{t:latticeiso}
Let $T\colon\fbl^p[E] \rightarrow \fbl^p[F]$ be a lattice homomorphism for some  $1\le p<\infty$ and some  Banach spaces~$E$ and~$F$. Then there is a unique map $\Phi_T\colon F^*\rightarrow E^*$ such that 
\begin{equation}\label{t:latticeiso:eq1}
    Tf=f\circ \Phi_T\qquad (f\in \fbl^p[E]).
\end{equation} 
This map $\Phi_T$ is positively homogeneous and satisfies
\begin{enumerate}[label={\normalfont{(\roman*)}}]
\item\label{t:latticeiso3} $\lVert \Phi_T\rVert_p= \lVert T\rVert;$
\item\label{t:latticeiso2} 
$\Phi_T{\upharpoonright_{B_{F^*}}}$ is weak*-to-weak* continuous.
\end{enumerate}
\end{thm}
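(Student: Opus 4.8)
The plan is to define $\Phi_T$ by dualising the action of $T$ on the generators $\delta_x$, and then to recognise $T$ as the composition operator $C_{\Phi_T}$ studied in Lemma~\ref{L:compositionOps}. Concretely, for each $y^*\in F^*$ I would set $\Phi_T(y^*)(x) = (T\delta_x)(y^*)$ for $x\in E$. The map $x\mapsto T\delta_x$ is linear and bounded from~$E$ into~$\fbl^p[F]$, being the composition of the linear isometry $\delta^E$ with~$T$, so $x\mapsto (T\delta_x)(y^*)$ is a linear functional on~$E$. Applying the definition~\eqref{Defn:FBLpNorm} of the $\fbl^p$-norm to the single-element system $(y^*)$ (whose weak $p$-summing norm equals $\lVert y^*\rVert$), I get $\lvert\Phi_T(y^*)(x)\rvert = \lvert(T\delta_x)(y^*)\rvert\le \lVert T\delta_x\rVert_{\fbl^p[F]}\,\lVert y^*\rVert\le \lVert T\rVert\,\lVert x\rVert\,\lVert y^*\rVert$, so $\Phi_T(y^*)\in E^*$ with $\lVert\Phi_T(y^*)\rVert\le\lVert T\rVert\,\lVert y^*\rVert$. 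Positive homogeneity of~$\Phi_T$ in the variable~$y^*$ is immediate from the positive homogeneity of each $T\delta_x\in H^p[F]$.

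Next I would estimate $\lVert\Phi_T\rVert_p$. For any finite system $(y_j^*)_{j=1}^m$ in~$F^*$ with $\lVert(y_j^*)_{j=1}^m\rVert_{p,\text{weak}}\le 1$ and any $x\in B_E$, the definition of~$\Phi_T$ gives $\sum_{j=1}^m\lvert\Phi_T(y_j^*)(x)\rvert^p = \sum_{j=1}^m\lvert (T\delta_x)(y_j^*)\rvert^p\le \lVert T\delta_x\rVert_{\fbl^p[F]}^p\le \lVert T\rVert^p$, again by~\eqref{Defn:FBLpNorm} applied to $T\delta_x\in H^p[F]$. Taking suprema over~$x$ and over the admissible systems yields $\lVert\Phi_T\rVert_p\le\lVert T\rVert<\infty$, so Lemma~\ref{L:compositionOps}\ref{L:compositionOps:ii} applies and $C_{\Phi_T}$ is a bounded lattice homomorphism from $H^p[E]$ into $H^p[F]$ with $\lVert C_{\Phi_T}\rVert=\lVert\Phi_T\rVert_p$. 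By construction $C_{\Phi_T}\delta_x = \delta_x\circ\Phi_T = T\delta_x$ for every $x\in E$, so~$T$ and (the restriction of)~$C_{\Phi_T}$ are lattice homomorphisms on~$\fbl^p[E]$ agreeing on the generators $\{\delta_x : x\in E\}$. As both preserve the linear and lattice operations, they agree on the sublattice $\operatorname{lat}\delta[E]$, which is dense in~$\fbl^p[E]$, and as both are continuous they coincide on all of~$\fbl^p[E]$. This establishes~\eqref{t:latticeiso:eq1}.

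The remaining assertions are then formal. The identity~\eqref{t:latticeiso:eq1} exhibits~$T$ as the restriction of~$C_{\Phi_T}$ to the closed sublattice~$\fbl^p[E]$ of~$H^p[E]$, so $\lVert T\rVert\le\lVert C_{\Phi_T}\rVert = \lVert\Phi_T\rVert_p$; combined with the reverse inequality from the previous paragraph this gives~\ref{t:latticeiso3}. Uniqueness is forced by evaluating~\eqref{t:latticeiso:eq1} at $f=\delta_x$: any map $\Psi\colon F^*\to E^*$ with $T\delta_x = \delta_x\circ\Psi$ must satisfy $\Psi(y^*)(x) = (T\delta_x)(y^*) = \Phi_T(y^*)(x)$ for all $x\in E$ and $y^*\in F^*$, whence $\Psi=\Phi_T$. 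For~\ref{t:latticeiso2} I would invoke $\fbl^p[F]\subseteq H_{w^*}[F]$ from Lemma~\ref{L5.1}\ref{L5.1iii}: if a net $(y_\alpha^*)$ in~$B_{F^*}$ converges weak* to $y^*\in B_{F^*}$, then each $T\delta_x$ is weak*-continuous on~$B_{F^*}$, so $\Phi_T(y_\alpha^*)(x) = (T\delta_x)(y_\alpha^*)\to (T\delta_x)(y^*) = \Phi_T(y^*)(x)$; since this holds for every $x\in E$, the net $(\Phi_T(y_\alpha^*))$ converges weak* to $\Phi_T(y^*)$.

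The conceptual crux is recognising that $\Phi_T$ must be the pointwise dual of $x\mapsto T\delta_x$ and verifying that it genuinely lands in~$E^*$ with $\lVert\Phi_T\rVert_p<\infty$; once these are in hand, the rest reduces to the density of $\operatorname{lat}\delta[E]$ and the transport properties of composition operators already recorded in Lemma~\ref{L:compositionOps}. The one point demanding care is bookkeeping of the three norms involved — $\lVert T\rVert$ on $\fbl^p$, $\lVert C_{\Phi_T}\rVert$ on $H^p$, and $\lVert\Phi_T\rVert_p$ — and checking that the lower bound supplied directly by the generators $\delta_x$ matches the operator-norm inequality coming from the restriction, so that all three coincide.
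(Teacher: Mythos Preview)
Your proof is correct and follows essentially the same approach as the paper's: define $\Phi_T(y^*)(x)=(T\delta_x)(y^*)$, check it lands in~$E^*$, is positively homogeneous with $\lVert\Phi_T\rVert_p\le\lVert T\rVert$, and then identify~$T$ with the restriction of the composition operator~$C_{\Phi_T}$. The only minor difference is that where you pass from agreement on the generators~$\delta_x$ to agreement on all of~$\fbl^p[E]$ via the density of $\operatorname{lat}\delta[E]$ and continuity, the paper instead invokes the uniqueness clause of the universal property (via \cite[Corollary~3.5(ii)]{JLTTT}); both arguments are standard and equivalent here.
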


\begin{proof} We begin with the uniqueness. Suppose that $\Phi_T\colon F^*\rightarrow E^*$ is a map which satisfies~\eqref{t:latticeiso:eq1}. Substituting $f=\delta_x$ for some $x\in E$ into this identity and evaluating it at some $y^*\in F^*$, we obtain
\begin{equation}\label{t:latticeiso:eq2} (T\delta_x)(y^*) = (\delta_x\circ\Phi_T)(y^*) = \Phi_T (y^*)(x). \end{equation}
Reading this equation from the right to the left, we conclude that there is only one possible way to define~$\Phi_T$. Taking this as our definition, we must check that $\Phi_T(y^*)\in E^*$, which is easy: Linearity follows from the fact that $\delta_{\lambda w+x} = \lambda\delta_w+\delta_x$ for $\lambda\in\R$ and $w,x\in E$, and continuity from the estimate $\lvert(T\delta_x)(y^*)\rvert\le \lVert T\rVert\,\lVert x\rVert\,\lVert y^*\rVert$. This estimate also shows that~$\Phi_T$ maps~$B_{F^*}$ into~$\lVert T\rVert B_{E^*}$.

 It is equally easy to see that $\Phi_T$ is positively homogeneous because 
\[ 
\Phi_T (\lambda y^*) (x)=(T\delta_{x})(\lambda y^*)=\lambda (T\delta_{x})(y^*)=\lambda \Phi_T(y^*)(x)\quad (\lambda\geq0,\,y^*\in F^*,\,x\in E). \]

Next, we shall prove~\ref{t:latticeiso3}--\ref{t:latticeiso2}, beginning with the latter. Suppose that $(y^*_\alpha)$ is a net in~$B_{F^*}$ which weak*-converges to some~$y^*\in B_{F^*}$, and take $x\in E$. Then  Lemma~\ref{L5.1}\ref{L5.1iii} implies that the net $((T\delta_{x})(y^*_\alpha))$ converges to $(T\delta_{x})(y^*)$, which in view of the definition~\eqref{t:latticeiso:eq2} means that the net
$(\Phi_T (y^*_\alpha) (x))$ converges to~$\Phi_T (y^*)(x)$.

To verify~\ref{t:latticeiso3}, we observe that the uniqueness part of the universal property of~$\fbl^p[E]$ implies that~$T$ is the lattice homomorphism induced by the bounded linear operator \mbox{$T\circ\delta^E\colon E\to\fbl^p[F]$}; that is, $T=\widehat{T\circ\delta^E}$, and therefore 
\begin{align*}
\lVert T\rVert &= \lVert T\circ\delta^E\rVert = \sup\bigl\{\lVert T\delta_x\rVert_{\fbl^p[F]} : x\in B_E\bigr\}
=\sup\biggl\{\Bigl(\sum_{j=1}^{m}\lvert (T\delta_x)(y^*_j)\rvert^p\Bigr)^{\frac1p} :\biggr.\\ &\mbox{}\hspace{3.3cm}\biggl. x\in B_E,\, m\in\mathbb N,\, (y_j^*)_{j=1}^m\subset F^*,\,\lVert (y^*_j)_{j=1}^{m}\rVert_{p,{\normalfont{\text{weak}}}}\leq 1\biggr\}\\
&=\sup\left\{\bigl\lVert\bigl(\Phi_T (y^*_j)\bigr)_{j=1}^m\bigr\rVert_{p,{\normalfont{\text{weak}}}} :  m\in\mathbb N,\, (y_j^*)_{j=1}^m\subset F^*,\, \lVert (y^*_j)_{j=1}^{m}\rVert_{p,{\normalfont{\text{weak}}}}\leq 1\right\}\\
&=\lVert \Phi_T\rVert_p.
\end{align*}

It remains to verify that the map~$\Phi_T$ defined by~\eqref{t:latticeiso:eq2} 
satisfies~\eqref{t:latticeiso:eq1}. We have already shown that~$\Phi_T$ is positively homogeneous and satisfies $\lVert\Phi\rVert_p<\infty$, so  Lemma~\ref{L:compositionOps} implies that it induces a lattice homomorphism $C_{\Phi_T}\colon H^p[E]\to H^p[F]$ by com\-po\-si\-tion; that is, $C_{\Phi_T}(f) = f\circ\Phi_T$ for $f\in H^p[E]$. The fact that 
\[ C_{\Phi_T}(\delta_x)(y^*)=\delta_x(\Phi_T (y^*)) = \Phi_T(y^*)(x) = (T\delta_{x})( y^*)\qquad (x\in E,\, y^*\in F^*) \]
shows that the restriction of~$C_{\Phi_T}$ to~$\fbl^p[E]$ satisfies 
$C_{\Phi_T}{\!\upharpoonright_{\fbl^p[E]}}\circ\delta^E=\iota\circ T\circ\delta^E$, 
where $\iota\colon\fbl^p[F]\to H^p[F]$ denotes the inclusion map.
This in turn implies that $C_{\Phi_T}{\!\upharpoonright_{\fbl^p[E]}}=\iota\circ T$ by \cite[Corollary~3.5(ii)]{JLTTT}, bearing in mind that the map~$\delta^E$ is denoted~$\phi_E^{\mathcal{D}}$ in \cite{JLTTT}. Now the conclusion follows because
\[  Tf = C_{\Phi_T}(f) = f\circ\Phi_T\qquad (f\in\fbl^p[E]). \qedhere  \]
\end{proof}

\begin{defn}
    Given a lattice homomorphism $T\colon\fbl^p[E] \rightarrow \fbl^p[F]$, we call the unique map $\Phi_T\colon F^*\to E^*$ satisfying~\eqref{t:latticeiso:eq1} the \emph{map induced by~$T$.}
\end{defn}

\begin{rem}\label{R:inducedHom} Suppose that $T\colon\fbl^p[E] \rightarrow \fbl^p[F]$ is a lattice homomorphism for some  $1\le p<\infty$ and some  Banach spaces~$E$ and~$F$, and let \mbox{$\Phi_T\colon F^*\rightarrow E^*$} be the induced map. Theorem~\ref{t:latticeiso} shows that~$\Phi_T$ is positively homogeneous, \mbox{$\lVert\Phi_T\rVert_p<\infty$} and $\Phi_T{\upharpoonright_{B_{F^*}}}$ is weak*-to-weak* continuous, so Lemma~\ref{L:compositionOps} 
implies that the com\-po\-si\-tion operator~$C_{\Phi_T}\colon H[E]\to H[F]$ induces a lattice homomorphism between each of the following four pairs of Banach lattices, for every $q\in[p,\infty)$: 
\begin{equation}\label{R:inducedHom:eq3}
\begin{alignedat}{2} H^q[E]&\rightarrow H^q[F],\qquad & H^q_{w^*}[E]&\rightarrow H^q_{w^*}[F],\\ \overline{J^q_{w^*}[E]}&\rightarrow \overline{J^q_{w^*}[F]},\qquad & M^q_{w^*,0}[E]&\rightarrow M^q_{w^*,0}[F]. \end{alignedat}
\end{equation}
In fact, we already used the first of these identities for $q=p$ in the proof of Theorem~\ref{t:latticeiso}. As we saw towards the end of that proof, we can interpret~\eqref{t:latticeiso:eq1} as the statement that the lattice homomorphism~$C_{\Phi_T}$ is an extension of~$T$ because
\begin{equation}\label{R:inducedHom:eq2}
    C_{\Phi_T}(f) = f\circ\Phi_T = Tf\qquad (f\in\fbl^p[E]). 
\end{equation}
\end{rem}

We shall now complement~\eqref{R:inducedHom:eq3} by showing that the answer to Question~\ref{Q:Iw*Invariant} is positive when $\Phi=\Phi_T$ is induced by a lattice homomorphism  $T\colon\fbl^p[E] \rightarrow \fbl^p[F]$. Furthermore, using Lemma~\ref{L:compositionOps}\ref{L:compositionOps:iii}, we can show that~$\fbl^q$ is $C_{\Phi_T}$\nobreakdash-in\-variant for every $q>p$.

\begin{lem}\label{L:Iw*invariant}
Let $T\colon\fbl^p[E] \rightarrow \fbl^p[F]$ be a lattice homomorphism for some  $1\le p<\infty$ and some  Banach spaces~$E$ and~$F$. Then:
\begin{enumerate}[label={\normalfont{(\roman*)}}]
\item\label{L:Iw*invariant:i} $C_{\Phi_T}(\overline{I_{w^*}[E]})\subseteq\overline{I_{w^*}[F]};$
\item\label{L:Iw*invariant:ii} $C_{\Phi_T}(\fbl^q[E])\subseteq\fbl^q[F]$ for every
 $q\in(p,\infty)$.
\end{enumerate}
\end{lem}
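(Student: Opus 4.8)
The plan is to handle the two parts separately, with part~\ref{L:Iw*invariant:i} carrying all the difficulty and part~\ref{L:Iw*invariant:ii} following formally from Lemma~\ref{L:compositionOps}\ref{L:compositionOps:iii}.

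For part~\ref{L:Iw*invariant:i}, since $\lVert\Phi_T\rVert_p=\lVert T\rVert<\infty$, Lemma~\ref{L:compositionOps}\ref{L:compositionOps:ii} tells us that $C_{\Phi_T}$ is a bounded (hence continuous) operator on $(H^p,\lVert\,\cdot\,\rVert_{\fbl^p})$, so it is enough to prove $C_{\Phi_T}(I_{w^*}[E])\subseteq\overline{I_{w^*}[F]}$ and then pass to closures. I would fix $f\in I_{w^*}[E]$ and set $g=C_{\Phi_T}(f)=f\circ\Phi_T$. Two preliminary facts are needed. First, $g\in H_{w^*}^p[F]$: membership in $H^p[F]$ follows from $I_{w^*}[E]\subseteq I[E]\subseteq H^p[E]$ and Lemma~\ref{L:compositionOps}\ref{L:compositionOps:ii}, while weak*-continuity on $B_{F^*}$ follows from $C_{\Phi_T}(H_{w^*}[E])\subseteq H_{w^*}[F]$. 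Second, using Lemma~\ref{L5.1}\ref{L5.1_0} I choose $x_1,\dots,x_n\in E$ with $\lvert f\rvert\le\sum_{j=1}^n\lvert\delta_{x_j}\rvert$; applying the lattice homomorphism $C_{\Phi_T}$ and recalling $C_{\Phi_T}(\delta_{x_j})=T\delta_{x_j}$ from~\eqref{R:inducedHom:eq2} produces the domination $\lvert g\rvert\le h$, where $h:=\sum_{j=1}^n\lvert T\delta_{x_j}\rvert\in\fbl^p[F]_+$.

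The crux---and the step I expect to be the main obstacle, because $f$ need not belong to $\fbl^p[E]$ and hence $g$ cannot be approximated directly by lattice expressions in the functionals $\delta_y$---is to pass from ``$g\in H_{w^*}^p[F]$ is dominated by some $h\in\fbl^p[F]_+$'' to ``$g\in\overline{I_{w^*}[F]}$''. The idea is a truncation argument. Since $\operatorname{lat}\delta[F]$ is dense in $\fbl^p[F]$, I pick $u_k\in\operatorname{lat}\delta[F]$ with $u_k\to h$ in $\lVert\,\cdot\,\rVert_{\fbl^p[F]}$, and after replacing $u_k$ by $u_k\vee 0$ may assume $u_k\ge0$; note that $u_k\in\operatorname{lat}\delta[F]\subseteq I_{w^*}[F]$ by~\eqref{E:inclusions2}. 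Setting $g_k=(g\wedge u_k)\vee(-u_k)$, the truncation of $g$ to the order interval $[-u_k,u_k]$, I have $g_k\in H_{w^*}[F]$ because $H_{w^*}[F]$ is a sublattice (Lemma~\ref{L5.1}\ref{L5.1ii}), and $\lvert g_k\rvert\le u_k$ places $g_k$ in the ideal $I[F]$; thus $g_k\in I_{w^*}[F]$. A pointwise computation gives $\lvert g-g_k\rvert=(\lvert g\rvert-u_k)^+\le(h-u_k)^+\le\lvert h-u_k\rvert$, so $\lVert g-g_k\rVert_{\fbl^p[F]}\le\lVert h-u_k\rVert_{\fbl^p[F]}\to0$ and $g\in\overline{I_{w^*}[F]}$, which finishes part~\ref{L:Iw*invariant:i}.

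For part~\ref{L:Iw*invariant:ii}, fix $q\in(p,\infty)$. Lemma~\ref{L:compositionOps}\ref{L:compositionOps:iii} gives $\lVert\Phi_T\rVert_q\le\lVert\Phi_T\rVert_p<\infty$, so Lemma~\ref{L:compositionOps}\ref{L:compositionOps:ii} (with exponent~$q$) shows that $C_{\Phi_T}$ is a continuous lattice homomorphism $(H^q[E],\lVert\,\cdot\,\rVert_{\fbl^q[E]})\to(H^q[F],\lVert\,\cdot\,\rVert_{\fbl^q[F]})$. It carries each generator $\delta_x$ to $T\delta_x\in\fbl^p[F]\subseteq\fbl^q[F]$ (the inclusion holding since $\lVert\,\cdot\,\rVert_{\fbl^q}\le\lVert\,\cdot\,\rVert_{\fbl^p}$, again by Lemma~\ref{L:compositionOps}\ref{L:compositionOps:iii}), and as $\fbl^q[F]$ is a closed sublattice this yields $C_{\Phi_T}(\operatorname{lat}\delta[E])\subseteq\fbl^q[F]$. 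Since $\fbl^q[E]$ is the $\lVert\,\cdot\,\rVert_{\fbl^q[E]}$-closure of $\operatorname{lat}\delta[E]$ and $C_{\Phi_T}$ is continuous for these norms, taking closures gives $C_{\Phi_T}(\fbl^q[E])\subseteq\fbl^q[F]$, as desired.
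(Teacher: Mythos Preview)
Your proof is correct and structurally identical to the paper's for both parts: in~\ref{L:Iw*invariant:i} you reduce by continuity to $f\in I_{w^*}[E]$, dominate $g=C_{\Phi_T}(f)$ by $h=\sum_{j}\lvert T\delta_{x_j}\rvert\in\fbl^p[F]$, record that $g\in H_{w^*}^p[F]$, and conclude; in~\ref{L:Iw*invariant:ii} you use $\lVert\Phi_T\rVert_q\le\lVert\Phi_T\rVert_p$ to view $C_{\Phi_T}$ as a bounded lattice homomorphism $H^q[E]\to H^q[F]$ sending each $\delta_x$ into $\fbl^p[F]\subseteq\fbl^q[F]$.

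The one place you diverge is the step you flag as ``the crux'' in~\ref{L:Iw*invariant:i}. The paper dispatches it in a single line: $I_{w^*}[F]$ is an ideal of the Banach lattice $H_{w^*}^p[F]$, hence so is its closure $\overline{I_{w^*}[F]}$; since $g\in H_{w^*}^p[F]$ and $\lvert g\rvert\le h\in\fbl^p[F]\subseteq\overline{I_{w^*}[F]}$, the ideal property gives $g\in\overline{I_{w^*}[F]}$ immediately. Your truncation argument with $g_k=(g\wedge u_k)\vee(-u_k)$ and the estimate $\lvert g-g_k\rvert=(\lvert g\rvert-u_k)^+\le\lvert h-u_k\rvert$ is a correct, self-contained proof of exactly this ideal property, so nothing is lost---it is just longer than necessary once one recalls that closures of ideals in Banach lattices are ideals.
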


\begin{proof}
\ref{L:Iw*invariant:i}. It suffices to show that $C_{\Phi_T}(f)\in\overline{I_{w^*}[F]}$ for every $f\in I_{w^*}[E]_+$ because~$C_{\Phi_T}$ is a lattice homomorphism (in particular continuous). Take $m\in\N$ and $x_1,\ldots,x_m\in E$ such that $f\le\sum_{j=1}^m\lvert\delta_{x_j}\rvert$. Using that~$C_{\Phi_T}$ is a lattice homomorphism together with~\eqref{R:inducedHom:eq2}, we obtain
\[ 0\le C_{\Phi_T}(f)\le C_{\Phi_T}\Bigl(\sum_{j=1}^m\lvert\delta_{x_j}\rvert\Bigr) = \sum_{j=1}^m\lvert C_{\Phi_T}(\delta_{x_j})\rvert = \sum_{j=1}^m\lvert T\delta_{x_j}\rvert\in\fbl^p[F]\subseteq\overline{I_{w^*}[F]}. \]
Since $\overline{I_{w^*}[F]}$ is an ideal of~$H_{w^*}^p[F]$ and $C_{\Phi_T}(f)\in H_{w^*}^p[F]$ by~\eqref{R:inducedHom:eq3}, we conclude that  $C_{\Phi_T}(f)\in \overline{I_{w^*}[F]}$, as desired. 

\ref{L:Iw*invariant:ii}. Suppose that $1\le p < q<\infty$. Then $\lVert f\rVert_{\fbl^q[F]}\le \lVert f\rVert_{\fbl^p[F]}$ for every $f\in H[F]$ by \cite[Proposition~7.3]{JLTTT} (or Lemma~\ref{L:compositionOps}\ref{L:compositionOps:iii} in the special case $\Phi=f\colon F^*\to\R$).  This implies that $\fbl^p[F]\subseteq\fbl^q[F]$ and therefore, using~\eqref{R:inducedHom:eq2}, we obtain
\[ C_{\Phi_T}(\delta_x) = T\delta_x\in \fbl^p[F]\subseteq\fbl^q[F]\qquad (x\in E). \]
Now the conclusion follows by viewing~$C_{\Phi_T}$ as a lattice homomorphism from~$H^q[E]$ to~$H^q[F]$, as shown in~\eqref{R:inducedHom:eq3}. 
\end{proof}

\begin{cor}\label{c:extension to H0}
Let~$E$ and~$F$ be Banach spaces for which $\fbl^p[E]$ and $\fbl^p[F]$ are lattice isomorphic for some $1\le p<\infty$. Then so are each of the following six pairs for every $q\in [p,\infty)$ (where the closures are taken with respect to the $\fbl^q$-norm): 
\begin{alignat*}{3}
\fbl^q[E]&\cong\fbl^q[F];\qquad &
\overline{I_{w^*}[E]}&\cong \overline{I_{w^*}[F]};\qquad & H^q_{w^*}[E]&\cong H^q_{w^*}[F]\\ 
\overline{J^q_{w^*}[E]}&\cong\overline{J^q_{w^*}[F]};\qquad & M^q_{w^*,0}[E]&\cong M^q_{w^*,0}[F];\qquad & H^q[E]&\cong H^q[F].
\end{alignat*}
\end{cor}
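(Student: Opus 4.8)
The plan is to deduce this corollary directly from Theorem~\ref{t:latticeiso} together with Lemma~\ref{L:Iw*invariant} and Remark~\ref{R:inducedHom}. The starting point is a lattice isomorphism $T\colon\fbl^p[E]\to\fbl^p[F]$ with inverse $S = T^{-1}\colon\fbl^p[F]\to\fbl^p[E]$, which is again a lattice homomorphism. Applying Theorem~\ref{t:latticeiso} to both~$T$ and~$S$, I obtain induced maps $\Phi_T\colon F^*\to E^*$ and $\Phi_S\colon E^*\to F^*$, each positively homogeneous, with finite $p$-norm, and weak*-to-weak* continuous on the respective dual unit balls. By Remark~\ref{R:inducedHom}, each induces a lattice homomorphism $C_{\Phi_T}$ (respectively $C_{\Phi_S}$) between all four pairs of Banach lattices listed in~\eqref{R:inducedHom:eq3}, for every $q\in[p,\infty)$, and by Lemma~\ref{L:Iw*invariant}\ref{L:Iw*invariant:i}--\ref{L:Iw*invariant:ii} the composition operators also map $\overline{I_{w^*}[E]}$ into $\overline{I_{w^*}[F]}$ and $\fbl^q[E]$ into $\fbl^q[F]$. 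This accounts for all six pairs in the statement.

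The crux is to verify that $C_{\Phi_T}$ and $C_{\Phi_S}$ are mutually inverse on each of the six Banach lattices, so that each restriction is a lattice isomorphism. First I would establish this on $\fbl^p$ itself: by~\eqref{R:inducedHom:eq2} we have $C_{\Phi_T}f = Tf$ for $f\in\fbl^p[E]$ and $C_{\Phi_S}g = Sg$ for $g\in\fbl^p[F]$, so $C_{\Phi_S}\circ C_{\Phi_T}$ agrees with $S\circ T = \mathrm{id}$ on $\fbl^p[E]$, and symmetrically. More usefully, I would show that the full composition operators satisfy $C_{\Phi_T}\circ C_{\Phi_S} = C_{\Phi_S\circ\Phi_T}$ and that $\Phi_S\circ\Phi_T = \mathrm{id}_{F^*}$ as maps on $F^*$ (and $\Phi_T\circ\Phi_S = \mathrm{id}_{E^*}$). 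This identity of the underlying maps is the key point: once it holds, $C_{\Phi_S}\circ C_{\Phi_T} = C_{\Phi_T\circ\Phi_S} = C_{\mathrm{id}} = \mathrm{id}$ on \emph{all} of $H^q[E]$, and hence on each of the six sublattices and ideals simultaneously, with no further case analysis.

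To prove $\Phi_T\circ\Phi_S=\mathrm{id}_{E^*}$, I would exploit uniqueness. The composite $S\circ T=\mathrm{id}_{\fbl^p[E]}$ is itself a lattice homomorphism from $\fbl^p[E]$ to $\fbl^p[E]$, and by Theorem~\ref{t:latticeiso} it has a unique induced map; since the identity operator is induced by $\mathrm{id}_{E^*}$, uniqueness forces the induced map of $S\circ T$ to be $\mathrm{id}_{E^*}$. On the other hand, evaluating~\eqref{t:latticeiso:eq2} for the composite shows that the map induced by $S\circ T$ is precisely $\Phi_T\circ\Phi_S$ (reading the chain $(S(T\delta_x))(x^*) = (T\delta_x)(\Phi_S(x^*)) = \Phi_T(\Phi_S(x^*))(x)$, using positive homogeneity to handle linearity). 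Comparing the two descriptions gives $\Phi_T\circ\Phi_S=\mathrm{id}_{E^*}$, and symmetrically $\Phi_S\circ\Phi_T=\mathrm{id}_{F^*}$.

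The main obstacle I anticipate is purely bookkeeping rather than conceptual: one must confirm that the restriction of each composition operator to a given pair is genuinely a \emph{bijective} lattice homomorphism \emph{onto} the target space, not merely an injection into it. Boundedness and the lattice-homomorphism property come for free from Lemma~\ref{L:compositionOps}, and injectivity follows from the existence of the two-sided inverse established above; surjectivity onto each target is then automatic because the inverse $C_{\Phi_S}$ already maps the corresponding space of~$F$ back into that of~$E$, again by~\eqref{R:inducedHom:eq3} and Lemma~\ref{L:Iw*invariant}. The only mild subtlety is that for $q>p$ one uses Lemma~\ref{L:Iw*invariant}\ref{L:Iw*invariant:ii} (and the inclusion $\fbl^p\subseteq\fbl^q$) to see that $\Phi_T$, though a priori only induced by a homomorphism at exponent~$p$, still governs the isomorphisms at every larger exponent; this is exactly where the monotonicity of $p\mapsto\lVert\Phi\rVert_p$ from Lemma~\ref{L:compositionOps}\ref{L:compositionOps:iii} is needed.
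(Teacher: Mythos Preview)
Your proposal is correct and follows essentially the same approach as the paper: both establish that $\Phi_T$ and $\Phi_{T^{-1}}$ are mutual inverses (the paper via a direct calculation, you via the uniqueness clause of Theorem~\ref{t:latticeiso}, which amounts to the same computation), and then conclude that the composition operators $C_{\Phi_T}$ and $C_{\Phi_{T^{-1}}}$ are mutually inverse lattice isomorphisms on each of the six pairs using Remark~\ref{R:inducedHom} and Lemma~\ref{L:Iw*invariant}. Your handling of the $q>p$ subtlety for $\overline{I_{w^*}}$ also matches the paper's, which likewise notes that one must first apply Lemma~\ref{L:Iw*invariant}\ref{L:Iw*invariant:ii} to obtain the $\fbl^q$-level isomorphism before invoking part~\ref{L:Iw*invariant:i} at exponent~$q$.
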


\begin{proof}
Suppose that $T\colon \fbl^p[E]\rightarrow \fbl^p[F]$  is a lattice isomorphism  with inverse  $T^{-1}\colon\fbl^p[F]\rightarrow \fbl^p[E]$. 
Then, for $x\in E$ and $x^*\in E^*$, we have 
\[ x^*(x) = (T^{-1}(T\delta_x))(x^*) = (\delta_x\circ\Phi_T\circ\Phi_{T^{-1}})(x^*) = (\Phi_T\circ\Phi_{T^{-1}})(x^*)(x), \]
with a similar calculation showing that $\Phi_{T^{-1}}\circ\Phi_T = I_{F^*}$; that is, 
the induced maps $\Phi_T\colon F^*\to E^*$ and $\Phi_{T^{-1}}\colon E^*\to F^*$ are inverses of each other, 
and therefore the composition operators~$C_{\Phi_T}$ and~$C_{\Phi_{T^{-1}}}$ are also inverses of each other. As we saw in~\eqref{R:inducedHom:eq3} and Lemma~\ref{L:Iw*invariant}, they restrict to lattice homomorphisms between each of specified pairs, from which the conclusions follow. (In the case of the $\fbl^q$\nobreakdash-closure of the ideal~$I_{w^*}$ for $q>p$, a little extra care is required; before invoking Lemma~\ref{L:Iw*invariant}\ref{L:Iw*invariant:i}, we use the second part of the same result to deduce that~$C_{\Phi_T}$ is a lattice isomorphism between~$\fbl^q[E]$ and~$\fbl^q[F]$ with inverse~$C_{\Phi_{T^{-1}}}$.) 
\end{proof}

\begin{example}\label{Ex:fbl_iso}
  One cannot in general reverse the implication 
  \begin{equation*} 
  \fbl^p[E]\cong\fbl^p[F]\ \implies\ H^p[E]\cong H^p[F] 
  \end{equation*}
  proved in Corollary~\ref{c:extension to H0}. To see this, take Banach spaces~$E$ and~$F$ whose dual spaces~$E^*$ and~$F^*$ are linearly isomorphic. Then it is clear that~$H^p[E]$ and~$H^p[F]$ are lattice isomorphic. In particular,  $H^1[\ell_1]$ is lattice isomorphic to $H^1[L_1[0,1]]$, but $\fbl^1[\ell_1]$ and $\fbl^1[L_1[0,1]]$ are not lattice isomorphic by \cite[Theorems~4.11 and~4.13]{ART}.

Proposition~\ref{prop:findimcase} provides another example, valid for any $1\le p<\infty$; it shows that~$H^p[E]$ and~$H^p[F]$ are lattice iso\-mor\-phic for every pair~$E$ and~$F$ of finite-dimensional Banach spaces of di\-men\-sion at least~$2$, but~$\fbl^p[E]$ and~$\fbl^p[F]$ are only lattice isomorphic if $\dim E=\dim F$. 
\end{example}

Our next result uses Lemma~\ref{L:compositionOps} to generalize the above observation that~$H^p[E]$ and~$H^p[F]$ are lattice iso\-mor\-phic when~$E$ and~$F$ have finite dimension at least~$2$
to the infinite-dimensional case.

\begin{prop}\label{p:r2r3} Let~$F$ be a closed subspace of finite codimension in a Banach space~$E$, and suppose that~$F$ has dimension at least~$2$. Then the Banach lattices~$H^p[E]$ and~$H^p[F]$ are lattice isomorphic for every $1\le p<\infty$.
\end{prop}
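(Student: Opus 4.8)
The plan is to realise the asserted isomorphism as a composition operator $C_\Phi$ induced by a cleverly chosen positively homogeneous \emph{bijection} $\Phi\colon F^*\to E^*$, using the machinery of Lemma~\ref{L:compositionOps}. Indeed, if I can produce a positively homogeneous bijection $\Phi\colon F^*\to E^*$ with $\lVert\Phi\rVert_p<\infty$ and $\lVert\Phi^{-1}\rVert_p<\infty$, then Lemma~\ref{L:compositionOps}\ref{L:compositionOps:ii} gives bounded lattice homomorphisms $C_\Phi\colon H^p[E]\to H^p[F]$ and $C_{\Phi^{-1}}\colon H^p[F]\to H^p[E]$ with norms $\lVert\Phi\rVert_p$ and $\lVert\Phi^{-1}\rVert_p$. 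These are mutually inverse, since $C_{\Phi^{-1}}\circ C_\Phi$ and $C_\Phi\circ C_{\Phi^{-1}}$ are the composition operators of $\Phi\circ\Phi^{-1}=\mathrm{id}_{E^*}$ and $\Phi^{-1}\circ\Phi=\mathrm{id}_{F^*}$. Hence $C_\Phi$ is a lattice isomorphism of Banach lattices, and the whole proof reduces to constructing such a~$\Phi$.

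To build $\Phi$, I would split off a finite-dimensional block. Write $n=\dim(E/F)$; the case $n=0$ is trivial, so assume $n\ge1$. Fix compatible decompositions $F=F_0\oplus G$ and $E=F_0\oplus G_1$, where $G\subseteq F$ is two-dimensional (possible because $\dim F\ge2$), $F_0$ is a closed complement of $G$ in $F$, and $G_1=G\oplus N$ with $N$ a finite-dimensional complement of $F$ in $E$, so that $\dim G_1=n+2\ge3$. Dualising and using the bounded, weak*-continuous adjoint projections yields $F^*=F_0^*\oplus G^*$ and $E^*=F_0^*\oplus G_1^*$; the two copies of $F_0^*$ are genuinely compatible because the projection $E\to F_0$ restricts on $F$ to the projection $F\to F_0$. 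I would then pick any positively homogeneous bijection $\psi\colon G^*\to G_1^*$ carrying the unit sphere of $G^*$ bijectively onto the unit sphere of $G_1^*$; such a map exists since both spheres have cardinality continuum, and one may arrange $\lVert\psi(\gamma)\rVert=\lVert\gamma\rVert$ for all $\gamma$, with $\psi^{-1}$ again positively homogeneous and norm-preserving. This is exactly where the hypothesis $\dim F\ge2$ is used: it forces $G$ to be two-dimensional, so that $S_{G^*}$ is infinite and can be matched with $S_{G_1^*}$. Setting $\Phi=\mathrm{id}_{F_0^*}\oplus\psi$ then gives a positively homogeneous bijection $F^*\to E^*$ with inverse $\mathrm{id}_{F_0^*}\oplus\psi^{-1}$.

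The main obstacle is the quantitative estimate $\lVert\Phi\rVert_p<\infty$ (and symmetrically for $\Phi^{-1}$), i.e.\ that $\Phi$ sends weakly $p$-summable sequences in $F^*$ to weakly $p$-summable sequences in $E^*$. Given such a sequence $(z_j^*)$ in $F^*$, I would decompose each $z_j^*$ into its $F_0^*$- and $G^*$-components and use the subadditivity of $\lVert\,\cdot\,\rVert_{p,\text{weak}}$ (as in the proof of Theorem~\ref{thm:fbl_neq_clIw_p}) to treat the two resulting pieces separately. The $F_0^*$-piece is governed by the identity part of $\Phi$: the coordinate projection $F^*\to F_0^*$ is bounded and weak*-continuous, hence preserves weak $p$-summability with norm control. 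The $G_1^*$-piece is controlled because $G_1^*$ is finite-dimensional, where the weak $p$-summing norm of a sequence is equivalent to $(\sum_j\lVert\,\cdot\,\rVert^p)^{1/p}$ (weak and strong $p$-summability coincide in finite dimensions); since $\lVert\psi(\gamma)\rVert=\lVert\gamma\rVert$ and the $G^*$-components are again extracted by a bounded weak*-continuous projection, the $G_1^*$-components are moved with uniformly controlled norm. Combining the two bounds yields $\lVert\Phi\rVert_p<\infty$, and running the identical argument with $\psi^{-1}$ yields $\lVert\Phi^{-1}\rVert_p<\infty$, which completes the construction and hence the proof.
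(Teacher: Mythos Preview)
Your proposal is correct and follows essentially the same approach as the paper's proof: both split off a finite-dimensional summand, take a bijection between the unit spheres of the finite-dimensional dual pieces (using that these spheres have cardinality~$\mathfrak{c}$), extend it positively homogeneously, pair it with the identity on the common cofactor, and then verify $\lVert\Phi\rVert_p<\infty$ using that weak and strong $p$-summability coincide in finite dimensions. The only cosmetic difference is that the paper first renormalizes so that the finite-dimensional pieces are $\ell_1^n$ and $\ell_1^m$, which lets it express the bound as $\bigl\lVert\bigvee_{i=1}^n|\delta_{e_i}|\bigr\rVert_{\fbl^p[\ell_1^n]}$, whereas you work directly with the abstract decompositions and invoke the finite-dimensional equivalence of weak and strong summability in words.
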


\begin{proof} Since $H^p[G_1]$ and $H^p[G_2]$ are lattice isomorphic whenever~$G_1$ and~$G_2$ are linearly isomorphic Banach spaces, it suffices to show that~$H^p[E]$ and~$H^p[F]$ are lattice isomorphic for $E= \ell_1^m\oplus G$ and $F= \ell_1^n\oplus G$, where~$G$ is a Banach space and  $m,n\in\N\setminus\{1\}$. 
Take a bijection $\theta\colon S_{\ell_\infty^n}\rightarrow S_{\ell_\infty^m}$, where $S_{\ell_\infty^k}$ denotes the unit sphere of \mbox{$\ell_\infty^k = (\ell_1^k)^*$}, as usual. (Such a bijection exists because both sets have cardinality~$\mathfrak{c}$.) 
Let $\Theta\colon \ell_\infty^n\rightarrow \ell_\infty^m$ be the positively homogeneous extension of~$\theta$, that is, $\Theta(0)=0$ and 
\begin{equation*}
\Theta(y^*)=\lVert y^*\rVert_\infty\,\theta\Bigl(\frac{y^*}{\lVert y^*\rVert_\infty}\Bigr) = \bigvee_{i=1}^n\lvert\delta_{e_i}(y^*)\rvert\,\theta\Bigl(\frac{y^*}{\lVert y^*\rVert_\infty}\Bigr)\qquad (y^*\in\ell^n_\infty\setminus\{0\}), \end{equation*}
where $(e_i)_{i=1}^n$ denotes the unit vector basis for~$\ell_1^n$. 

To verify that $\lVert\Theta\rVert_p<\infty$, take $(y_j^*)_{j=1}^k\subset\ell_\infty^n$ with $\bigl\lVert (y_j^*)_{j=1}^k\bigr\rVert_{p,\text{weak}}\le 1$ for some $k\in\N$. We may suppose that $y_j^*\ne 0$ for each $j\in\{1,\ldots,k\}$, and find
\begin{align*}
\bigl\lVert \bigl(\Theta(y_j^*)\bigr)_{j=1}^k\bigr\rVert_{p,\text{weak}} &= \sup_{x\in B_{\ell_1^m}}\biggl(\sum_{j=1}^k\Bigl(\bigvee_{i=1}^n\lvert\delta_{e_i}(y^*_j)\rvert\Bigr)^p\,\Bigl|\Bigl\langle \theta\Bigl(\frac{y^*_j}{\lVert y^*_j\rVert_\infty}\Bigr),x\Bigr\rangle\Bigr|^p\biggr)^{\frac1p}\\
&\le \biggl(\sum_{j=1}^k\Bigl(\bigvee_{i=1}^n\lvert\delta_{e_i}(y^*_j)\rvert\Bigr)^p\biggr)^{\frac1p} \le \Bigl\lVert \bigvee_{i=1}^n\lvert\delta_{e_i}\rvert\Bigr\rVert_{\fbl^p[\ell_1^n]}
\end{align*}
where the first inequality follows from the fact that $\bigl\lVert\theta(y^*_j/\lVert y^*_j\rVert_\infty)\bigr\rVert_\infty = 1$.
Hence \mbox{$\lVert\Theta\rVert_p\le \bigl\lVert \bigvee_{i=1}^n\lvert\delta_{e_i}\rvert\bigr\rVert_{\fbl^p[\ell_1^n]}<\infty$}. 

We can obviously extend~$\Theta$ to a positively homogeneous map $\Phi\colon  F^*\to E^*$ by defining $\Phi(y^*,z^*) = (\Theta(y^*),z^*)$ for $y^*\in\ell_\infty^n$ and $z^*\in G^*$.
This extension satisfies $\lVert \Phi\rVert_p\le \lVert \Theta\rVert_p +1<\infty$ because 
\begin{align*}
  \bigl\lVert\bigl(\Phi(y_j^*,z^*_j)\bigr)_{j=1}^k\bigr\rVert_{p,\text{weak}} &= \bigl\lVert\bigl(\Theta(y_j^*),z^*_j\bigr)_{j=1}^k\bigr\rVert_{p,\text{weak}}\\
  &\le \bigl\lVert \bigl(\Theta(y_j^*)\bigr)_{j=1}^k\bigr\rVert_{p,\text{weak}} + \lVert (z^*_j)_{j=1}^k\rVert_{p,\text{weak}}\le \lVert \Theta\rVert_p +1
\end{align*}
for every $k\in\N$ and $(y_j^*,z^*_j)_{j=1}^k\subset F^*$ with $\lVert (y_j^*,z^*_j)_{j=1}^k\rVert_{p,\text{weak}}\le 1$. 

On the other hand, by repeating the above arguments for the inverse map $\theta^{-1}\colon S_{\ell_\infty^m}\rightarrow S_{\ell_\infty^n}$ instead of~$\theta$, we obtain a positively homogeneous map \mbox{$\Phi^{-1}\colon E^*\to F^*$} which is an inverse of~$\Phi$ (as our choice of symbol for it indicates) and satisfies  
\[ \lVert \Phi^{-1}\rVert_p\le \biggl\lVert \bigvee_{i=1}^m\lvert\delta_{e_i}\rvert\biggr\rVert_{\fbl^p[\ell_1^m]} + 1 <\infty. \] 
Therefore, by~\eqref{L:compositionOps:eq1}, we may regard the restrictions of~$C_\Phi$ and $C_{\Phi^{-1}}$ as lattice homo\-mor\-phisms $C_\Phi\colon H^p[E]\to H^p[F]$ and $C_{\Phi^{-1}}\colon H^p[F]\to H^p[E]$, respectively. They are clearly inverses of each other, from which the conclusion follows. 
\end{proof}

\begin{example}\label{Ex:GM} 
Gowers~\cite{gowers} and Gowers and Maurey~\cite{gm1} have shown that there are infinite-dimensional Banach spaces~$E$ which are not isomorphic to their hyper\-planes. However,  Proposition~\ref{p:r2r3} shows that~$H^p[E]$ and~$H^p[F]$ are lattice isomorphic whenever~$F$ is a closed subspace of finite codimension in~$E$ and $1\le p<\infty$. 

More generally, for every $k\in\{0,2,3,\ldots\}$, Gowers  and Maurey~\cite{gm2} have constructed a Banach space~$E_k$ such that~$E_k$ is isomorphic to a closed  subspace~$F\subset E_k$ if and only if~$F$ has finite codimension in~$E_k$, and this codimension is a multiple of~$k$. As above, $H^p[E_k]$ and~$H^p[F]$ are lattice isomorphic for every $1\le p<\infty$ and every closed sub\-space~$F$ of finite codimension in~$E_k$, irrespective of the value of this co\-di\-men\-sion. 
\end{example}

\begin{rem}
Examples~\ref{Ex:fbl_iso} and~\ref{Ex:GM} provide examples of non-isomorphic Banach spaces~$E$ and~$F$ for which $H^p[E]$ and~$H^p[F]$ are lattice isomorphic for every \mbox{$1\le p<\infty$}. As the proof of Proposition~\ref{p:r2r3} shows, the lattice isomorphisms in Example~\ref{Ex:GM} are induced by composition with a positively homogeneous bijection $\Phi\colon F^*\to E^*$,  which satisfies $\lVert\Phi\rVert_p<\infty$, but lacks any weak* continuity properties. 

In general, exhibiting weak* homeomorphisms between the unit balls of dual Banach spaces is not difficult. For instance, Keller's Theorem (see, \emph{e.g.,} \cite[Section~12.3]{fab-ultimo}) implies that the dual unit ball $B_{E^*}$ is weak* homeomorphic to the Hilbert cube $[-1, 1]^{\N}$ for every separable, infinite-dimensional Banach space~$E$. Under some technical assumptions, this weak* homeomorphism can be chosen to be positively homo\-geneous, as shown in \cite[Theorem 10.24]{OTTT}. 

Unfortunately, none of the above arguments provide any information about the corresponding free Banach lattices. It remains a major open problem whether there exist non-isomorphic Banach spaces~$E$ and~$F$ such that $\fbl^p[E]$ is lattice isomorphic to $\fbl^p[F]$ for some $1\le p<\infty$.
\end{rem}

The next lemma 
relates the properties of a lattice homomorphism between free Banach lattices to the map it induces.

\begin{lem}\label{l:phi injective}
Let $T\colon\fbl^p[E] \rightarrow \fbl^p[F]$ be a lattice homomorphism for some  $1\le p<\infty$ and some  Banach spaces~$E$ and~$F$. Then:
\begin{enumerate}[label={\normalfont{(\roman*)}}]
\item\label{l:phi injective:i} $T$ has dense range if and only if the sublattice generated by the range of the operator~$T\circ\delta^E$ is dense in~$\fbl^p[F]$. 
\item\label{l:phi injective:ii} Suppose that one and hence both of the conditions in~\ref{l:phi injective:i} are satisfied. Then the induced map $\Phi_T\colon F^*\to E^*$ is injective.
\end{enumerate}
\end{lem}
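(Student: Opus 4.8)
The plan is to prove the two parts in order, using the representation $Tf = f \circ \Phi_T$ throughout.

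For part~\ref{l:phi injective:i}, the key observation is that $T\circ\delta^E$ is precisely the operator $x\mapsto T\delta_x$, whose range is $\{T\delta_x : x\in E\}$. Since~$T$ is a lattice homomorphism and $\fbl^p[E]$ is the closed sublattice of~$H^p[E]$ generated by $\{\delta_x : x\in E\}$, the image $T(\fbl^p[E])$ is the closed sublattice of~$\fbl^p[F]$ generated by $T(\{\delta_x : x\in E\}) = \{T\delta_x : x\in E\}$. First I would make this precise: because~$T$ is a continuous lattice homomorphism, it maps the sublattice generated by $\{\delta_x\}$ onto the sublattice generated by $\{T\delta_x\}$, and (again by continuity of~$T$) the closure of the former maps into the closure of the latter. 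Thus the closure of the range of~$T$ coincides with the closure of the sublattice generated by $\operatorname{ran}(T\circ\delta^E)$. From here the equivalence is immediate: $T$ has dense range, i.e.\ $\overline{T(\fbl^p[E])} = \fbl^p[F]$, exactly when the sublattice generated by $\{T\delta_x : x\in E\}$ is dense in~$\fbl^p[F]$.

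For part~\ref{l:phi injective:ii}, I would argue by contraposition: suppose $\Phi_T$ is \emph{not} injective, so there exist $y_1^*\ne y_2^*$ in~$F^*$ with $\Phi_T(y_1^*) = \Phi_T(y_2^*)$. Then for \emph{every} $f\in\fbl^p[E]$ we have
\[ (Tf)(y_1^*) = f\bigl(\Phi_T(y_1^*)\bigr) = f\bigl(\Phi_T(y_2^*)\bigr) = (Tf)(y_2^*), \]
so every function in the range of~$T$ takes equal values at $y_1^*$ and $y_2^*$. Since evaluation at a fixed point of~$F^*$ is continuous on $(\fbl^p[F], \|\cdot\|_{\fbl^p[F]})$ (this is Lemma~\ref{L5.1}\ref{L5.1vi'}, which gives $|g(y^*)| \le \|y^*\|\,\|g\|_{\fbl^p[F]}$), the set $\{g\in\fbl^p[F] : g(y_1^*) = g(y_2^*)\}$ is closed, and hence contains $\overline{T(\fbl^p[E])}$. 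But under the density hypothesis this closure is all of~$\fbl^p[F]$, forcing $g(y_1^*) = g(y_2^*)$ for every $g\in\fbl^p[F]$. To reach a contradiction I would exhibit a single $g\in\fbl^p[F]$ separating $y_1^*$ and $y_2^*$; the natural choice is $g = \delta_x$ for a suitable $x\in F$ (viewing $\delta_x(y^*) = y^*(x)$). Since $y_1^*\ne y_2^*$, there is $x\in F$ with $y_1^*(x)\ne y_2^*(x)$, and then $\delta_x(y_1^*)\ne\delta_x(y_2^*)$, the desired contradiction.

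I expect the main obstacle to be the bookkeeping in part~\ref{l:phi injective:i}: one must verify carefully that a continuous lattice homomorphism carries the closed sublattice generated by a set onto the closed sublattice generated by the image, rather than merely into it. The inclusion $\overline{T(\operatorname{lat}\delta[E])}\subseteq\overline{\operatorname{lat}\{T\delta_x\}}$ is clear, and the reverse follows because $T(\operatorname{lat}\delta[E])$ is itself a sublattice containing $\{T\delta_x\}$, so it contains $\operatorname{lat}\{T\delta_x\}$, whence taking closures gives equality. Part~\ref{l:phi injective:ii} is then a short separation argument once one records the continuity of point evaluations supplied by Lemma~\ref{L5.1}\ref{L5.1vi'}.
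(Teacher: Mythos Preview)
Your proposal is correct and follows essentially the same approach as the paper. The only cosmetic differences are that in part~\ref{l:phi injective:ii} the paper first restricts to~$B_{F^*}$ (using positive homogeneity of~$\Phi_T$) before invoking Lemma~\ref{L5.1}\ref{L5.1vi'}, and it passes through the sublattice generated by $\{T\delta_x\}$ rather than using the full representation $Tf=f\circ\Phi_T$ directly; your version is arguably a touch cleaner on that count.
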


\begin{proof}
\ref{l:phi injective:i} follows from the fact that~$\fbl^p[E]$ is the closed sublattice of~$H^p[E]$ generated by the image of~$\delta^E$.

\ref{l:phi injective:ii}. Suppose that the sublattice generated by $(T\circ\delta^E)(E)$ is dense in~$\fbl^p[F]$. Since~$\Phi_T$ is positively homogeneous, it suffices to verify that its restriction to~$B_{F^*}$ is injective. Take $y^*,z^*\in B_{F^*}$ such that $\Phi_T(y^*)=\Phi_T(z^*)$.  By~\eqref{t:latticeiso:eq1}, we have 
\[ (T\delta_x)(y^*) = \delta_x(\Phi_T(y^*)) = \delta_x(\Phi_T(z^*)) = (T\delta_x)(z^*)\qquad (x\in E). \] 
This implies that $f(y^*)=f(z^*)$ for every~$f$ belonging to the sublattice generated by \mbox{$(T\circ\delta^E)(E)$}. By hypothesis, this sub\-lattice is dense in~$\fbl^p[F]$ with respect to the~$\fbl^p$-norm, which according to Lemma~\ref{L5.1}\ref{L5.1vi'} dominates the uniform norm on~$B_{F^*}$, so we conclude that $f(y^*)=f(z^*)$ for every $f\in \fbl^p[F]$. In particular, taking $f=\delta_y$ for $y\in F$, we see that $y^*(y) = z^*(y)$, which proves that $y^*= z^*$. 
\end{proof}

\begin{example}\label{OpInducedLatHom} Given a bounded linear operator $S\colon E\to F$ between Banach spaces~$E$ and~$F$, there is a standard way to associate a lattice homomorphism between the corresponding free Banach lattices with it.  Indeed, for \mbox{$1\le p<\infty$}, $\delta^F\circ S\colon E\to\fbl^p[F]$ is a bounded linear operator of norm~$\lVert S\rVert$ into a $p$-convex Banach lattice, so the universal property of~$\fbl^p[E]$ implies that there is a unique lattice homomorphism $\overline{S} = \widehat{\delta^F\circ S}\colon \fbl^p[E]\to\fbl^p[F]$ such that 
\begin{equation}\label{OpInducedLatHom:eq3} 
\overline{S}\circ\delta^E = \delta^F\circ S, 
\end{equation}
and $\lVert \overline{S}\rVert = \lVert S\rVert$. The map $\Phi_{\overline{S}}\colon F^*\to E^*$ induced by~$\overline{S}$ is simply~$S^*$, as the following calculation, valid for every $y^*\in F^*$ and $x\in E$, shows:
\begin{align*}
    \Phi_{\overline{S}}(y^*)(x) &= (\delta_x\circ\Phi_{\overline{S}})(y^*) = (\overline{S}\delta_x)(y^*) = \delta_{Sx}(y^*) = y^*(Sx) = S^*(y^*)(x).
\end{align*}
Consequently, we have 
\begin{equation}\label{OpInducedLatHom:eq2}
\overline{S}f = f\circ S^* = C_{S^*}(f)\qquad (f\in\fbl^p[E]), 
\end{equation}
and  Lemma~\ref{L:Iw*invariant}\ref{L:Iw*invariant:i} implies that
    \begin{equation}\label{OpInducedLatHom:eq1}
        C_{S^*}(\overline{I_{w^*}[E]})\subseteq \overline{I_{w^*}[F]}.
    \end{equation} 
\end{example}

We can use this inclusion to obtain stability results for the properties that  Questions~\ref{Q5.7} and~\ref{Q5.8} are concerned with. 

\begin{prop}\label{lem:fbl_neq_clIw2} Let $1\le p <\infty$, and suppose that~$F$ is a complemented subspace of a Banach space~$E$ for which $\fbl^p[E]=\overline{I_{w^*}[E]}$. 
Then $\fbl^p[F]=\overline{I_{w^*}[F]}$. 
\end{prop}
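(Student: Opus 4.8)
The plan is to exploit the complementation of $F$ in $E$ to transport the equality $\fbl^p[E]=\overline{I_{w^*}[E]}$ down to $F$, using the composition operators of Example~\ref{OpInducedLatHom} as the transfer mechanism. Let $J\colon F\to E$ denote the inclusion and $P\colon E\to F$ a bounded projection, so that $P\circ J=I_F$. First I would form the induced lattice homomorphisms $\overline{J}\colon\fbl^p[F]\to\fbl^p[E]$ and $\overline{P}\colon\fbl^p[E]\to\fbl^p[F]$, whose induced maps are $J^*=\Phi_{\overline{J}}$ and $P^*=\Phi_{\overline{P}}$ by the computation in Example~\ref{OpInducedLatHom}. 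From $P\circ J=I_F$ and functoriality we get $\overline{P}\circ\overline{J}=\overline{I_F}=\mathrm{id}_{\fbl^p[F]}$; equivalently, on the level of composition operators, $C_{J^*}\circ C_{P^*}=C_{P^*\circ J^*}=C_{(J\circ P)^*}$ and $C_{P^*}\circ C_{J^*}=C_{I_{F^*}}=\mathrm{id}$.

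The key inclusion I would use is \eqref{OpInducedLatHom:eq1}, which gives
\[
C_{J^*}(\overline{I_{w^*}[F]})\subseteq\overline{I_{w^*}[E]}
\qquad\text{and}\qquad
C_{P^*}(\overline{I_{w^*}[E]})\subseteq\overline{I_{w^*}[F]}.
\]
Now take any $f\in\overline{I_{w^*}[F]}$. Applying the first inclusion, $C_{J^*}(f)\in\overline{I_{w^*}[E]}=\fbl^p[E]$ by hypothesis. Then $C_{P^*}$ maps $\fbl^p[E]$ into $\fbl^p[F]$ (this is the content of \eqref{R:inducedHom:eq2}, or simply that $\overline{P}$ is a lattice homomorphism between the free Banach lattices), so $C_{P^*}(C_{J^*}(f))\in\fbl^p[F]$. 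But $C_{P^*}\circ C_{J^*}=\mathrm{id}$ on $H^p[F]$, and in particular on $\overline{I_{w^*}[F]}\subseteq H^p[F]$, so $C_{P^*}(C_{J^*}(f))=f$. Hence $f\in\fbl^p[F]$, which proves $\overline{I_{w^*}[F]}\subseteq\fbl^p[F]$; the reverse inclusion $\fbl^p[F]\subseteq\overline{I_{w^*}[F]}$ is part of the general chain~\eqref{E:inclusions1}, giving the desired equality.

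The main point requiring care—and the step I expect to be the crux—is the identity $C_{P^*}\circ C_{J^*}=\mathrm{id}$ on the relevant space. This reduces to $J^*\circ P^*=(P\circ J)^*=I_{F}^*=I_{F^*}$, which holds because $P\circ J=I_F$; composition of maps on the dual side reverses order, so $C_{P^*}\circ C_{J^*}=C_{J^*\circ P^*}=C_{I_{F^*}}$, the identity. I would state this explicitly to avoid any confusion about the order of composition. One should also verify that $\lVert J^*\rVert_p<\infty$ and $\lVert P^*\rVert_p<\infty$ with the appropriate weak*-continuity, but these are automatic from Theorem~\ref{t:latticeiso} since $J^*$ and $P^*$ are the maps induced by the lattice homomorphisms $\overline{J}$ and $\overline{P}$; thus Lemma~\ref{L:compositionOps} and Lemma~\ref{L:Iw*invariant}\ref{L:Iw*invariant:i} apply and guarantee the two displayed inclusions above.
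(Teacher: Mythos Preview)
Your proposal is correct and follows essentially the same argument as the paper: the paper also uses the complementation $LJ=I_F$ (with $L$ playing the role of your $P$) to write $\overline{I_{w^*}[F]} = C_{L^*}(C_{J^*}(\overline{I_{w^*}[F]})) \subseteq C_{L^*}(\overline{I_{w^*}[E]}) = C_{L^*}(\fbl^p[E]) = \overline{L}(\fbl^p[E])\subseteq \fbl^p[F]$, invoking~\eqref{OpInducedLatHom:eq1} and~\eqref{OpInducedLatHom:eq2} exactly as you do. Your explicit verification that $C_{P^*}\circ C_{J^*}=C_{J^*\circ P^*}=C_{I_{F^*}}$ (with the correct order reversal) is the same identity the paper states as $(C_{L^*}\circ C_{J^*})f = f\circ J^*\circ L^* = f$.
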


\begin{proof} The hypothesis that~$F$ is complemented in~$E$ means that the inclusion map $J\colon F\hookrightarrow E$ has a bounded linear left inverse  $L\colon E\to F$; that is, $LJ = I_F$. This implies that 
$(C_{L^*}\circ C_{J^*})f = f\circ J^*\circ L^* = f$ for every $f\in H[F]$, and consequently
\begin{align*} \overline{I_{w^*}[F]} = C_{L^*}(C_{J^*}(\overline{I_{w^*}[F]})) \subseteq C_{L^*}(\overline{I_{w^*}[E]}) &= C_{L^*}(\fbl^p[E])\\ &= \overline{L}(\fbl^p[E])\subseteq \fbl^p[F],
\end{align*}
where we have used~\eqref{OpInducedLatHom:eq1}, the hypothesis and~\eqref{OpInducedLatHom:eq2}. 
This completes the proof because the opposite inclusion $\fbl^p[F]\subseteq\overline{I_{w^*}[F]}$ is always true. 
\end{proof}

\begin{rem}
    In view of Theorem~\ref{thm:fbl_neq_clIw_p},  it is possible that Proposition~\ref{lem:fbl_neq_clIw2} has no genuine content in the sense that the hypothesis that $\fbl^p[E]=\overline{I_{w^*}[E]}$ for some $1\le p <\infty$ may only be satisfied when~$E$ is  fi\-nite-di\-men\-sional.
\end{rem}

\begin{prop}\label{P:Q4.2forQuotients}
Let~$E$ be a reflexive Banach space for which $\overline{I_{w^*}[E]} = H_{w^*}^p[E]$ for some $1\le p<\infty$. Then $\overline{I_{w^*}[F]} = H_{w^*}^p[F]$ for every quotient space~$F$ of~$E$. 
\end{prop}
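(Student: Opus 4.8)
The plan is to establish the non-trivial inclusion $H_{w^*}^p[F]\subseteq\overline{I_{w^*}[F]}$, the reverse one being Lemma~\ref{L5.1}\ref{lemma:Mw*0iii}. Write $Q\colon E\to F$ for the quotient map; since $F$ is a quotient of a reflexive space it is itself reflexive, so $B_{E^{**}}=B_E$ and $B_{F^{**}}=B_F$, with the weak* and weak topologies agreeing. I would first record that $\overline{I_{w^*}[F]}$ is a closed ideal of the Banach lattice $H_{w^*}^p[F]$: indeed $I_{w^*}[F]=I[F]\cap H_{w^*}[F]$ is an ideal of $H_{w^*}^p[F]$ by Lemma~\ref{L5.1}\ref{L5.1_0}, and the closure of an ideal is an ideal. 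Given $f\in H_{w^*}^p[F]$, \cite[Proposition~7.7]{JLTTT} provides a probability measure $\mu\in C(B_{F})_+^*$ with $\lvert f\rvert\le\lVert f\rVert_{\fbl^p[F]}\,f_\mu^p$. Because $\overline{I_{w^*}[F]}$ is an ideal of $H_{w^*}^p[F]$ and $f\in H_{w^*}^p[F]$, it therefore suffices to prove that $f_\mu^p\in\overline{I_{w^*}[F]}$ for every probability measure $\mu$ on $B_F$.

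To transfer such a measure back to $E$, I would exploit that the reflexivity of $E$ forces $Q(B_E)=B_F$: the set $Q(B_E)$ is weakly compact, hence norm-closed and convex, and it contains the open unit ball of $F$ because $Q$ is a quotient map. Consequently $Q\colon (B_E,\text{weak})\to(B_F,\text{weak})$ is a continuous surjection of compact Hausdorff spaces, so the pushforward map $\nu\mapsto Q_*\nu$ carries positive Radon measures on $B_E$ onto those on $B_F$; thus there is a probability measure $\nu\in C(B_E)_+^*$ with $Q_*\nu=\mu$. A change of variables then gives, for every $y^*\in F^*$,
\begin{equation*}
 f_\nu^p(Q^*y^*)=\Bigl(\int_{B_E}\lvert y^*(Qx)\rvert^p\,d\nu(x)\Bigr)^{\frac1p}=\Bigl(\int_{B_F}\lvert y^*(z)\rvert^p\,d\mu(z)\Bigr)^{\frac1p}=f_\mu^p(y^*),
\end{equation*}
that is, $C_{Q^*}(f_\nu^p)=f_\mu^p$.

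It remains to locate $f_\nu^p$ correctly. One always has $f_\nu^p\in H^p[E]$ by \cite[Proposition~7.6]{JLTTT}, and the reflexivity of $E$ yields $f_\nu^p\in H_{w^*}[E]$ through the implication \ref{ref:a}$\Rightarrow$\ref{ref:a'} of Theorem~\ref{t:ref}; hence $f_\nu^p\in H_{w^*}^p[E]$, which equals $\overline{I_{w^*}[E]}$ by hypothesis. Since $Q^*=\Phi_{\overline Q}$ is the map induced by the lattice homomorphism $\overline Q$, formula~\eqref{OpInducedLatHom:eq1} in Example~\ref{OpInducedLatHom} gives $C_{Q^*}(\overline{I_{w^*}[E]})\subseteq\overline{I_{w^*}[F]}$, whence $f_\mu^p=C_{Q^*}(f_\nu^p)\in\overline{I_{w^*}[F]}$. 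Combined with the domination $\lvert f\rvert\le\lVert f\rVert_{\fbl^p[F]}\,f_\mu^p$ and the ideal property recorded above, this shows $f\in\overline{I_{w^*}[F]}$, completing the argument.

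The main obstacle is the measure-theoretic transfer in the second paragraph: one must upgrade the surjectivity of $Q$ to surjectivity of the pushforward on \emph{positive} measures (this follows from the fact that $\varphi\mapsto\varphi\circ Q$ is an isometric embedding of $C(B_F)$ into $C(B_E)$, together with a positive extension to produce a positive $\nu$), and this is precisely where reflexivity enters, guaranteeing $Q(B_E)=B_F$ so that $Q$ restricts to a continuous surjection between the weak*-compact balls $B_{E^{**}}$ and $B_{F^{**}}$ on which the representing measures live. Everything else is bookkeeping built on the identity $C_{Q^*}(f_\nu^p)=f_\mu^p$ and the $\overline{I_{w^*}}$-invariance of composition with $Q^*$.
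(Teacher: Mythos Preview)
Your proof is correct and follows essentially the same route as the paper's: dominate $f$ by $f_\mu^p$ via \cite[Proposition~7.7]{JLTTT}, lift $\mu$ to a positive measure $\nu$ on $B_E$ using that $Q{\upharpoonright_{B_E}}$ is a continuous surjection onto $B_F$ (the paper phrases this as surjectivity of the adjoint $C_q^*$ of the composition operator, citing \cite[Theorem~1.4.19]{MN}, which is exactly your pushforward), verify $f_\mu^p=C_{Q^*}(f_\nu^p)$ by change of variables, and conclude via Theorem~\ref{t:ref}, the hypothesis, and~\eqref{OpInducedLatHom:eq1}. The only cosmetic difference is that the paper expresses the change-of-variables step through the map~$\psi_p$ of Lemma~\ref{lem:reflexive}, whereas you write it out directly.
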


\begin{proof}
Let $Q\colon E\rightarrow F$ denote the quotient map, and write $q\colon B_E\to B_F$ for its restriction to the unit balls. Then~$q$ is continuous with respect to the relative weak topologies and surjective (because~$E$ and~$F$ are reflexive), so we can define an isometric lattice homomorphism $C_q\colon C(B_F)\to C(B_E)$ by composition: $C_qf = f\circ q$. Hence, by \cite[Theorem~1.4.19]{MN}, its adjoint $C_q^*\colon C(B_E)^*\to C(B_F)^*$ is a surjective, interval-preserving operator.

For every $f\in H_{w^*}^p[F]$, \cite[Proposition~7.7]{JLTTT} implies that
\begin{equation}\label{P:Q4.2forQuotients:eq1}
  \lvert f(x^*)\rvert\leq \lVert f\rVert_{\fbl^p[F]}\,f_\mu^p(x^*)\qquad (x^*\in F^*)
\end{equation} 
for some probability measure $\mu\in C(B_{F})^*_+$,
where~$f_\mu^p\in H^p[F]$ is the function defined by~\eqref{eq:defnfmu}. 
In view of the previous paragraph, we can take $\nu\in C(B_E)_+^*$ such that $C_q^*(\nu)=\mu$. Since~$E$ is reflexive,  $f_\nu^p\in H_{w^*}^p[E] = \overline{I_{w^*}[E]}$ by Theorem~\ref{t:ref} and the hypothesis..

For $y^*\in B_{F^*}$, the function $\psi_p(y^*)\in C(B_F)$ defined in Lemma~\ref{lem:reflexive} satisfies
\[ C_q(\psi_p(y^*))(x) = \psi_p(y^*)(q(x)) = \lvert y^*(q(x))\rvert^p = \lvert (Q^*y^*)(x)\rvert^p = \psi_p(Q^*y^*)(x) \]
for each $x\in B_E$, so 
\begin{align*}
    \bigl(f_\mu^p(y^*)\bigr)^p &= \langle  \psi_p(y^*),\mu\rangle = \langle C_q(\psi_p(y^*)),\nu\rangle\\ &= \langle\psi_p(Q^*y^*),\nu\rangle = \bigl(f_\nu^p(Q^*y^*)\bigr)^p = \bigl(C_{Q^*}(f_\nu^p)(y^*)\bigr)^p. 
\end{align*}
This proves that $f_\mu^p = C_{Q^*}(f_\nu^p)$, which belongs to~$\overline{I_{w^*}[F]}$ by~\eqref{OpInducedLatHom:eq1}, and therefore  $f\in\overline{I_{w^*}[F]}$ by~\eqref{P:Q4.2forQuotients:eq1} because~$\overline{I_{w^*}[F]}$ is an ideal of~$H_{w^*}^p[F]$.
\end{proof}

We shall next address two very natural questions concerning a lattice homomorphism~$T$ between free Banach lattices: when is the induced map~$\Phi_T$ linear? And how can we tell whether~$T$ arises from a bounded linear operator between the underlying Banach spaces via the construction described in Example~\ref{OpInducedLatHom}?

\begin{prop}\label{p:philinear}
Let $T\colon\fbl^p[E] \rightarrow \fbl^p[F]$ be a lattice homomorphism for some  $1\le p<\infty$ and some  Banach spaces~$E$ and~$F$. The following conditions are equivalent: 
\begin{enumerate}[label={\normalfont{(\alph*)}}]
\item\label{p:philinear:a} The induced map $\Phi_T\colon F^*\rightarrow E^*$ is linear.
\item\label{p:philinear:c} The induced map $\Phi_T\colon F^*\rightarrow E^*$ is an adjoint operator; that is, $\Phi_T=S^*$ for some bounded linear operator $S\colon E\rightarrow F$.
\item\label{p:philinear:b} There is a bounded linear operator $S\colon E\rightarrow F$ for which  $T\circ\delta^E=\delta^F\circ S$.
\item\label{p:philinear:d} $T=\overline{S}$ for some bounded linear operator $S\colon E\rightarrow F$.
\end{enumerate}
\end{prop}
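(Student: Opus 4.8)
The plan is to establish the four equivalences through the cycle $\ref{p:philinear:c}\Rightarrow\ref{p:philinear:b}\Rightarrow\ref{p:philinear:d}\Rightarrow\ref{p:philinear:c}$ together with the equivalence $\ref{p:philinear:a}\Leftrightarrow\ref{p:philinear:c}$. Most of these implications are short once we invoke what is already in place: Example~\ref{OpInducedLatHom} computes the induced map of~$\overline S$ explicitly, and the universal property of~$\fbl^p[E]$ supplies the required uniqueness. The one implication carrying genuine content is $\ref{p:philinear:a}\Rightarrow\ref{p:philinear:c}$, where we must upgrade the mere linearity of~$\Phi_T$ to the statement that it is an adjoint operator. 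This is where I expect the main difficulty, and it will be handled via the Krein--Smulian theorem, exactly as in Lemma~\ref{L5.4}\ref{L5.1v}.

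For the crux $\ref{p:philinear:a}\Rightarrow\ref{p:philinear:c}$, suppose $\Phi_T$ is linear and fix $x\in E$. I would consider the functional $\varphi_x\colon F^*\to\R$ given by $\varphi_x(y^*) = \Phi_T(y^*)(x) = (T\delta_x)(y^*)$, the second equality being~\eqref{t:latticeiso:eq2}. By the assumed linearity of~$\Phi_T$, the functional~$\varphi_x$ is linear, and by Theorem~\ref{t:latticeiso}\ref{t:latticeiso2} (or directly from $T\delta_x\in\fbl^p[F]\subseteq H_{w^*}[F]$ via Lemma~\ref{L5.1}\ref{L5.1iii}), its restriction to~$B_{F^*}$ is weak*-continuous. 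The Krein--Smulian theorem then guarantees that~$\varphi_x$ is weak*-continuous on all of~$F^*$, so there is a unique $Sx\in F$ with $\varphi_x(y^*) = y^*(Sx)$ for every $y^*\in F^*$. The assignment $x\mapsto Sx$ is linear because $\varphi_x(y^*)$ depends linearly on~$x$, and it is bounded since $\lVert Sx\rVert = \sup_{y^*\in B_{F^*}}\lvert\Phi_T(y^*)(x)\rvert\le\lVert\Phi_T\rVert_p\,\lVert x\rVert = \lVert T\rVert\,\lVert x\rVert$ by Theorem~\ref{t:latticeiso}\ref{t:latticeiso3}. By construction $\Phi_T(y^*)(x) = y^*(Sx) = (S^*y^*)(x)$ for all~$x$ and~$y^*$, that is, $\Phi_T = S^*$, which is~\ref{p:philinear:c}.

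The remaining links are routine. The implication $\ref{p:philinear:c}\Rightarrow\ref{p:philinear:a}$ is immediate, since any adjoint operator is linear. For $\ref{p:philinear:c}\Rightarrow\ref{p:philinear:b}$, if $\Phi_T = S^*$ then, for $x\in E$ and $y^*\in F^*$, equation~\eqref{t:latticeiso:eq2} gives $(T\delta_x)(y^*) = \Phi_T(y^*)(x) = y^*(Sx) = \delta_{Sx}(y^*)$, whence $T\circ\delta^E = \delta^F\circ S$. For $\ref{p:philinear:b}\Rightarrow\ref{p:philinear:d}$, both~$T$ and $\overline S = \widehat{\delta^F\circ S}$ are lattice homomorphisms $\fbl^p[E]\to\fbl^p[F]$ whose composition with~$\delta^E$ equals $\delta^F\circ S$, so the uniqueness clause in the universal property of~$\fbl^p[E]$ forces $T = \overline S$. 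Finally, $\ref{p:philinear:d}\Rightarrow\ref{p:philinear:c}$ is exactly the computation in Example~\ref{OpInducedLatHom} showing that $\Phi_{\overline S} = S^*$. This closes both the cycle and the equivalence with~\ref{p:philinear:a}.
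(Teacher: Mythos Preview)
Your proposal is correct and follows essentially the same approach as the paper: the key step \ref{p:philinear:a}$\Rightarrow$\ref{p:philinear:c} is handled via Krein--Smulian (you spell out the pointwise argument that the paper abbreviates), and the remaining implications use the universal property and Example~\ref{OpInducedLatHom} just as the paper does, differing only in that you organise them as a cycle rather than as pairwise equivalences.
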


\begin{proof}  We begin by showing that conditions~\ref{p:philinear:a} and~\ref{p:philinear:c} are equivalent. It is clear that~\ref{p:philinear:c} implies~\ref{p:philinear:a}. Conversely, suppose that~$\Phi_T$ is linear. By Theorem~\ref{t:latticeiso}, its restriction~$\Phi_T{\upharpoonright_{B_{F^*}}}$ is weak*-to-weak* continuous. This implies that $\Phi_T=S^*$ for some bounded linear operator $S\colon E\rightarrow F$, as one can prove by adding an application of the Krein--Smu\-lian Theorem to the standard proof that a weak*-to-weak* continuous linear operator between dual Banach spaces is the adjoint of a bounded linear operator. 

To see that conditions~\ref{p:philinear:c} and~\ref{p:philinear:b} are also equivalent, we observe that for $x\in E$, $y^*\in F^*$ and a bounded linear operator $S\colon E\rightarrow F$, we have 
\[ (\delta^F\circ S)(x)(y^*) = \delta_{Sx}(y^*) = y^*(Sx) = (S^*y^*)(x), \] 
while~\eqref{t:latticeiso:eq1} implies that
\[ (T\circ\delta^E)(x)(y^*) = 
(T\delta_x)(y^*) = (\delta_x\circ\Phi_T)(y^*) = \Phi_T(y^*)(x). \]
Comparing these two equations, we conclude that $T\circ\delta^E=\delta^F\circ S$ if and only if $\Phi_T=S^*$.

Finally, we verify that~\ref{p:philinear:b} and~\ref{p:philinear:d}  are equivalent. 
Equation~\eqref{OpInducedLatHom:eq3} shows that~\ref{p:philinear:d} implies~\ref{p:philinear:b}, while the converse implication follows from the fact that~$\overline{S}$ is the unique lattice homomorphism satisfying~\eqref{OpInducedLatHom:eq3}.
\end{proof}

The following example shows that not all lattice homomorphisms between free Banach lattices arise from bounded linear operators, or equivalently that the maps they induce need not be linear. 

\begin{example}\label{Ex:modulus}
Take $1\le p<\infty$, let $(e_n)_{n\in\N}$ denote the unit vector basis for~$\ell_1$, and identify~$\ell_1^*$ with~$\ell_\infty$, as usual. Since 
\[ \bigl\lVert\lvert\delta_{e_n}\rvert\bigr\rVert_{\fbl^p[\ell_1]} = \lVert\delta_{e_n}\rVert_{\fbl^p[\ell_1]} = \lVert e_n\rVert_{\ell_1} = 1\qquad (n\in\N), \] 
we can define a bounded linear operator $R\colon\ell_1\to\fbl^p[\ell_1]$ of norm~$1$ by $Re_n = \lvert\delta_{e_n}\rvert$ for every \mbox{$n\in\N$}. The universal property of~$\fbl^p[\ell_1]$ implies that~$R$ induces a lattice homomorphism $\widehat{R}\colon\fbl^p[\ell_1]\to\fbl^p[\ell_1]$ such that $\widehat{R}\circ\delta^{\ell_1} = R$, which in turn induces a map $\Phi_{\widehat{R}}\colon\ell_\infty\to\ell_\infty$ such that $\widehat{R}f = f\circ\Phi_{\widehat{R}}$ for every  $f\in\fbl^p[\ell_1]$ by Theorem~\ref{t:latticeiso}. Combining these identities with the definition of~$R$, we obtain
\begin{align*}  \Phi_{\widehat{R}}(x^*)(e_n) &= (\delta_{e_n}\circ\Phi_{\widehat{R}})(x^*) = (\widehat{R}\delta_{e_n})(x^*) = (Re_n)(x^*)\\ &= \lvert\delta_{e_n}\rvert(x^*) = \lvert x^*(e_n)\rvert = \lvert x^*\rvert(e_n)\qquad (n\in\N,\, x^*\in\ell_\infty).  \end{align*}
This shows that $\Phi_{\widehat{R}}(x^*)=\lvert x^*\rvert$ for every $x^*\in\ell_\infty$, so $\Phi_{\widehat{R}}$ is non-linear. 
\end{example}

Although the map induced by a lattice homomorphism between free Banach lattices need not be linear,  it is sometimes possible to find a bounded linear operator which agrees with it on certain vectors.

\begin{prop}
Let $\Phi\colon F^*\rightarrow E^*$ be a positively homogeneous map for some Banach spaces~$E$ and~$F$, and suppose that $\lVert\Phi\rVert_1<\infty$ and~$F^*$ has an unconditional basis $(f_n^*)_{n\in \mathbb N}$. Then the map $R\colon f_n^*\mapsto\Phi(f_n^*)$ for $n\in\N$ extends to a bounded linear operator \mbox{$R\colon F^*\rightarrow E^*$}.
\end{prop}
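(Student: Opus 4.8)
The plan is to define $R$ on the dense subspace of finite linear combinations of the basis vectors by linear extension, setting $R\bigl(\sum_{n=1}^N a_n f_n^*\bigr) = \sum_{n=1}^N a_n \Phi(f_n^*)$, to establish a bound of the form $\lVert Ry^*\rVert \le C\lVert y^*\rVert$ on such combinations, and then to extend $R$ to all of $F^*$ by continuity. Since $(f_n^*)_{n\in\N}$ is a Schauder basis, the coefficients in a finite combination are unique and such combinations are dense, so the only real work is the boundedness estimate; linearity of the extension, the identity $Rf_n^* = \Phi(f_n^*)$, and the fact that the target $E^*$ is complete then make the extension routine.

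For the estimate, fix $y^* = \sum_{n=1}^N a_n f_n^*$ and write $a_n = \varepsilon_n\lvert a_n\rvert$ with $\varepsilon_n\in\{-1,+1\}$. Positive homogeneity of~$\Phi$ gives $a_n\Phi(f_n^*) = \varepsilon_n\Phi(\lvert a_n\rvert f_n^*)$, so that $Ry^* = \sum_{n=1}^N \varepsilon_n\Phi(\lvert a_n\rvert f_n^*)$. Bounding this single sign pattern by the supremum over all sign patterns and recognising the weak $1$-summing norm via~\eqref{Eq:weakpsumnorm:1}, we obtain $\lVert Ry^*\rVert \le \bigl\lVert\bigl(\Phi(\lvert a_n\rvert f_n^*)\bigr)_{n=1}^N\bigr\rVert_{1,{\normalfont{\text{weak}}}}$, which by the definition~\eqref{defn:Phi_p_norm} of~$\lVert\Phi\rVert_1$ is at most $\lVert\Phi\rVert_1\,\bigl\lVert(\lvert a_n\rvert f_n^*)_{n=1}^N\bigr\rVert_{1,{\normalfont{\text{weak}}}}$. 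Finally, $\bigl\lVert(\lvert a_n\rvert f_n^*)_{n=1}^N\bigr\rVert_{1,{\normalfont{\text{weak}}}} = \sup_{\eta_n=\pm1}\bigl\lVert\sum_{n=1}^N \eta_n\lvert a_n\rvert f_n^*\bigr\rVert$ by~\eqref{Eq:weakpsumnorm:1}, and each such sum is obtained from $y^* = \sum_{n=1}^N a_n f_n^*$ by a change of signs, so unconditionality bounds it by $K\lVert y^*\rVert$, where $K$ is the unconditional constant of $(f_n^*)$. Combining these inequalities yields $\lVert Ry^*\rVert \le K\lVert\Phi\rVert_1\,\lVert y^*\rVert$, as required.

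The step I expect to demand the most care is the interplay between the \emph{positive} homogeneity of~$\Phi$ (which is weaker than full homogeneity; in particular~$\Phi$ need not be odd) and the sign changes forced by the coefficients~$a_n$. The resolution is precisely the observation that the weak $1$-summing norm is a supremum over all $\pm1$ sign patterns~\eqref{Eq:weakpsumnorm:1}: this lets us absorb the signs~$\varepsilon_n$ arising from $a_n = \varepsilon_n\lvert a_n\rvert$ into that supremum at no cost, so that only the nonnegative scalars~$\lvert a_n\rvert$ ever interact with~$\Phi$ through positive homogeneity. Once this bookkeeping is in place the remainder is mechanical, the constant $K\lVert\Phi\rVert_1$ being finite by hypothesis and independent of~$y^*$.
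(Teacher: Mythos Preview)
Your proof is correct and follows essentially the same route as the paper's: extend~$R$ linearly to the span of the basis, use positive homogeneity to write $a_n\Phi(f_n^*) = \varepsilon_n\Phi(\lvert a_n\rvert f_n^*)$, absorb the signs into the weak $1$-summing norm via~\eqref{Eq:weakpsumnorm:1}, apply the definition of~$\lVert\Phi\rVert_1$, and finish with the unconditional constant~$K$. The paper runs the chain of inequalities in the reverse order (starting from the unconditionality bound and working down to~$\lVert Ry^*\rVert$), but the content is identical and the resulting constant $K\lVert\Phi\rVert_1$ is the same.
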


\begin{proof} We extend~$R$ by linearity to the dense subspace of~$F^*$ spanned by $(f_n^*)_{n\in\N}$ and claim that this map is bounded by~$K\,\lVert\Phi\rVert_1$, where~$K$ denotes the un\-con\-di\-tional basis constant of~$(f_n^*)$.  Take $(a_j)_{j=1}^m\subset\R$ for some $m\in\N$, and choose $\sigma_j=\pm1$ such that $\sigma_ja_j\ge0$ for each $j\in\{1,\ldots,m\}$. Then~\eqref{Eq:weakpsumnorm:1} implies that
\[ \bigl\lVert(\sigma_ja_jf_j^*)_{j=1}^m\bigr\rVert_{1,\text{weak}}\le K\,\biggl\lVert\sum_{j=1}^m a_jf_j^*\biggr\rVert,  \]
so by~\eqref{defn:Phi_p_norm} and the positive homogeneity of~$\Phi$, we have 
\begin{align*}
\lVert\Phi\rVert_1K\,\biggl\lVert\sum_{j=1}^m a_jf_j^*\biggr\rVert &\ge \bigl\lVert\bigl(\Phi(\sigma_ja_jf_j^*)\bigr)_{j=1}^m\bigr\rVert_{1,\text{weak}} = \sup_{\epsilon_j=\pm1}\biggl\lVert\sum_{j=1}^m \epsilon_j\Phi(\sigma_ja_jf_j^*)\biggr\rVert\\ 
&=\! \sup_{\epsilon_j=\pm1}\biggl\lVert\sum_{j=1}^m \epsilon_j\sigma_ja_j\Phi(f_j^*)\biggr\rVert\ge 
\biggl\lVert\sum_{j=1}^m a_j\Phi(f_j^*)\biggr\rVert = \biggl\lVert R\Bigl(\sum_{j=1}^m a_jf_j^*\Bigr)\biggr\rVert. 
\end{align*}
This proves the claim, and the result follows.
\end{proof}

Perhaps a strategy to show that the Banach spaces~$E$ and~$F$ are linearly isomorphic whenever a lattice isomorphism $T\colon\fbl^p[E]\to\fbl^p[F]$ exists for some \mbox{$1\le p<\infty$}
would be to find a suitable linearization of the induced map $\Phi_T\colon F^*\rightarrow E^*$, which could then be used to construct an isomorphism between~$E$ and~$F$. 

\begin{rem}
    Suppose that $\Phi\colon F^*\rightarrow E^*$ is positively homogeneous and satisfies $\lVert\Phi\rVert_p<\infty$ for some $1\le p<\infty$. Then, for $m\in\N$ and $y_1^*,\ldots,y_m^*\in F^*$, we have
    \begin{align*}
        \biggl\lVert\Phi\Bigl(\sum_{j=1}^m y_j^*\Bigr) - \sum_{j=1}^m \Phi(y_j^*)\biggr\rVert &\le 
        \biggl\lVert\Phi\Bigl(\sum_{j=1}^m y_j^*\Bigr)\biggr\rVert + \sum_{j=1}^m \lVert\Phi(y_j^*)\rVert\\ &\le  \lVert\Phi\rVert_p  \biggl\lVert\sum_{j=1}^m y_j^*\biggr\rVert + \sum_{j=1}^m \lVert\Phi\rVert_p\lVert y_j^*\rVert\le 2\lVert\Phi\rVert_p\sum_{j=1}^m \lVert y_j^*\rVert. 
    \end{align*}
    This implies that~$\Phi$ would be quasi-linear if it was homogeneous (not just positively homogeneous). Since there is more than one definition of ``quasi-linear'' in circulation, let us state explicitly the definition we are using, following \cite[Chapter~16]{BL} and \cite[Definition~3.2.1]{C-SC}, not \cite[Definition~2.a.9]{LT2}: a map $\Phi\colon E\to F$ between Banach spaces~$E$ and~$F$ is \emph{quasi-linear} if it is homogeneous and there is a constant $K>0$ such that  
    \[ \lVert\Phi(x+y) - \Phi(x) - \Phi(y)\rVert\leq K(\lVert x\rVert + \lVert y\rVert)\qquad (x,y\in E). \]
\end{rem}

As we saw in Example~\ref{Ex:modulus}, the issue that the map~$\Phi$ may not be homogeneous is genuine, even when it is induced by a lattice homomorphism between free Banach lattices. However,  our next result shows that there are instances where homogeneity is auto\-ma\-tic, so the induced map is quasi-linear. 

\begin{lem}\label{L:1april2024}
    Let~$E$ and~$F$ be Banach spaces, and let $\Phi\colon F^*\to E^*$ be a positively homogeneous bijection for which $\lVert\Phi\rVert_1 = 1 = \lVert\Phi^{-1}\rVert_1$. Suppose that the norm on~$E^*$ is strictly convex. Then~$\Phi$ is homogeneous.
\end{lem}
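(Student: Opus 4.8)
The plan is to reduce the statement to the single identity $\Phi(-y^*) = -\Phi(y^*)$ for all $y^* \in F^*$. Indeed, positive homogeneity already gives $\Phi(\lambda y^*) = \lambda\Phi(y^*)$ for $\lambda \ge 0$, and once $\Phi(-y^*) = -\Phi(y^*)$ is known, the case $\lambda < 0$ follows by writing $\lambda y^* = \lvert\lambda\rvert(-y^*)$ and applying positive homogeneity. Moreover, since $\Phi(0) = 0$ and $\Phi(\lVert y^*\rVert\,\widehat{y}^{\,*}) = \lVert y^*\rVert\,\Phi(\widehat{y}^{\,*})$ for the normalization $\widehat{y}^{\,*} = y^*/\lVert y^*\rVert$, it is enough to prove $\Phi(-y^*) = -\Phi(y^*)$ for unit vectors $y^*$.

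First I would record that $\Phi$ is an isometry. Taking $p = 1$ in Lemma~\ref{L:compositionOps}\ref{L:compositionOps:ii}, the hypothesis $\lVert\Phi\rVert_1 = 1$ gives $\Phi(B_{F^*}) \subseteq B_{E^*}$, so $\Phi$ is norm\nobreakdash-contractive; likewise $\Phi^{-1}$ is contractive because $\lVert\Phi^{-1}\rVert_1 = 1$. Composing the two inequalities forces $\lVert\Phi(y^*)\rVert = \lVert y^*\rVert$ for every $y^* \in F^*$. Now fix $y^* \in F^*$ with $\lVert y^*\rVert = 1$ and set $u = \Phi(y^*)$ and $v = \Phi(-y^*)$, so that $\lVert u\rVert = \lVert v\rVert = 1$.

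The key step is to evaluate $\lVert(u,v)\rVert_{1,\text{weak}}$. Using~\eqref{Eq:weakpsumnorm:1} one has $\lVert(u,v)\rVert_{1,\text{weak}} = \max\{\lVert u+v\rVert, \lVert u-v\rVert\}$, which is at most $2$ by the triangle inequality. For the reverse inequality I would feed the pair $(u,v)$ back through the contractive inverse: applying $\lVert\Phi^{-1}\rVert_1 \le 1$ to the tuple $(u,v)$, and noting $\Phi^{-1}(u) = y^*$ and $\Phi^{-1}(v) = -y^*$, gives
\[ 2 = \lVert(y^*,-y^*)\rVert_{1,\text{weak}} = \bigl\lVert\bigl(\Phi^{-1}(u),\Phi^{-1}(v)\bigr)\bigr\rVert_{1,\text{weak}} \le \lVert(u,v)\rVert_{1,\text{weak}}. \]
Hence $\max\{\lVert u+v\rVert, \lVert u-v\rVert\} = 2$. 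The proof then concludes by invoking strict convexity of~$E^*$: since $\lVert u\rVert = \lVert v\rVert = 1$ and at least one of $\lVert u+v\rVert$, $\lVert u-v\rVert$ equals $2$, strict convexity forces the corresponding pair of unit vectors to coincide. The possibility $\lVert u+v\rVert = 2$ would give $u = v$, i.e. $\Phi(y^*) = \Phi(-y^*)$, which contradicts injectivity of the bijection~$\Phi$ because $y^* \ne -y^*$; therefore $\lVert u-v\rVert = 2$, which forces $u = -v$, that is, $\Phi(-y^*) = -\Phi(y^*)$, as required.

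I expect the main obstacle to be pinning down the exact value $\lVert(u,v)\rVert_{1,\text{weak}} = 2$. The upper bound is merely the triangle inequality applied to unit vectors, but the matching lower bound is what makes strict convexity bite, and it is obtained precisely by exploiting that the inverse map $\Phi^{-1}$ is also a $\lVert\,\cdot\,\rVert_1$\nobreakdash-contraction, applied to the image pair $(\Phi(y^*),\Phi(-y^*))$. Once this equality is secured, strict convexity together with injectivity of~$\Phi$ delivers the conclusion essentially for free.
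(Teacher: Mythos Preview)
Your proof is correct and follows essentially the same route as the paper's: reduce to showing $\Phi(-y^*)=-\Phi(y^*)$ for unit vectors, use~\eqref{Eq:weakpsumnorm:1} together with $\lVert\Phi\rVert_1=\lVert\Phi^{-1}\rVert_1=1$ to force $\max\{\lVert u+v\rVert,\lVert u-v\rVert\}=2$, then invoke strict convexity and injectivity to rule out $u=v$. The only cosmetic difference is that you first establish $\Phi$ is an isometry (giving $\lVert u\rVert=\lVert v\rVert=1$ exactly), whereas the paper works directly with $\lVert\Phi(\pm y^*)\rVert\le 1$, which suffices for the strict-convexity step.
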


\begin{proof}
Since~$\Phi$ is positively homogeneous, it suffices to show that $\Phi(y^*)=-\Phi(-y^*)$ for every unit vector $y^*\in F^*$. The triangle inequality and the fact that  $\lVert\Phi\rVert_1 = 1$ imply that  $\lVert\Phi(y^*)\pm\Phi(-y^*)\rVert\le 2$. Combining this with~\eqref{Eq:weakpsumnorm:1},     we obtain
\begin{align*} 
2&\ge\max\lVert\Phi(y^*)\pm\Phi(-y^*)\rVert = \lVert(\Phi(y^*),\Phi(-y^*))\rVert_{1,\text{weak}}\ge \lVert(y^*,-y^*)\rVert_{1,\text{weak}}
\end{align*}
because $\lVert\Phi^{-1}\rVert_1 = 1$. Another application of~\eqref{Eq:weakpsumnorm:1} shows that $\lVert(y^*,-y^*)\rVert_{1,\text{weak}} = \max\lVert y^*\pm(-y^*)\rVert = 2$,  so either $\lVert\Phi(y^*)+\Phi(-y^*)\rVert = 2$ or $\lVert\Phi(y^*)-\Phi(-y^*)\rVert = 2$. Since $\lVert\pm\Phi(\pm y^*)\rVert\le \lVert y^*\rVert = 1$, the strict convexity of the norm on~$E^*$ implies that $\Phi(y^*)=\Phi(-y^*)$ in the first case and $\Phi(y^*)=-\Phi(-y^*)$ in the second. However, $\Phi(y^*)=\Phi(-y^*)$ is impossible because~$\Phi$ is injective and $y^*\ne0$, so we must be in the second case, which gives the desired conclusion.
\end{proof}

\begin{cor}\label{C:1april2024} Let $T\colon \fbl^1[E]\rightarrow \fbl^1[F]$ be an isometric lattice isomorphism for some Banach spaces~$E$ and~$F$, and suppose that the norm on~$E^*$ is strictly convex. Then the induced map $\Phi_T\colon F^*\rightarrow E^*$ is homogeneous.
\end{cor}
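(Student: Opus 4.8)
The plan is to deduce this corollary directly from Lemma~\ref{L:1april2024}, whose hypotheses I need to verify for the induced map $\Phi_T$. Lemma~\ref{L:1april2024} requires three things: that $\Phi_T$ is positively homogeneous, that it is a bijection, and that $\lVert\Phi_T\rVert_1 = 1 = \lVert\Phi_T^{-1}\rVert_1$. The strict convexity of the norm on $E^*$ is assumed in the statement of the corollary, so it only remains to check these three properties.

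First I would invoke Theorem~\ref{t:latticeiso}, which tells me that the induced map $\Phi_T\colon F^*\to E^*$ is positively homogeneous and satisfies $\lVert\Phi_T\rVert_1 = \lVert T\rVert$. Since $T$ is assumed to be an isometric lattice isomorphism, we have $\lVert T\rVert = 1$, and therefore $\lVert\Phi_T\rVert_1 = 1$. The inverse $T^{-1}\colon\fbl^1[F]\to\fbl^1[E]$ is again an isometric lattice isomorphism, so the same reasoning gives $\lVert\Phi_{T^{-1}}\rVert_1 = \lVert T^{-1}\rVert = 1$.

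Next I would recall the computation carried out at the start of the proof of Corollary~\ref{c:extension to H0}: for a lattice isomorphism $T$ with inverse $T^{-1}$, the induced maps $\Phi_T$ and $\Phi_{T^{-1}}$ are mutually inverse, i.e.\ $\Phi_T\circ\Phi_{T^{-1}} = I_{E^*}$ and $\Phi_{T^{-1}}\circ\Phi_T = I_{F^*}$. In particular $\Phi_T$ is a bijection whose inverse is $\Phi_T^{-1} = \Phi_{T^{-1}}$, and combining this with the previous paragraph yields $\lVert\Phi_T^{-1}\rVert_1 = \lVert\Phi_{T^{-1}}\rVert_1 = 1$.

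At this point all hypotheses of Lemma~\ref{L:1april2024} are met: $\Phi_T$ is a positively homogeneous bijection with $\lVert\Phi_T\rVert_1 = 1 = \lVert\Phi_T^{-1}\rVert_1$, and the norm on $E^*$ is strictly convex. Applying the lemma concludes that $\Phi_T$ is homogeneous. I do not anticipate a genuine obstacle here, since the corollary is essentially a packaging of Theorem~\ref{t:latticeiso} and the inverse-map identity from Corollary~\ref{c:extension to H0} into the hypotheses of Lemma~\ref{L:1april2024}; the only point requiring a moment's care is confirming that $\Phi_T^{-1}$ coincides with the induced map $\Phi_{T^{-1}}$ so that its $\lVert\,\cdot\,\rVert_1$-norm can be read off from $\lVert T^{-1}\rVert$.
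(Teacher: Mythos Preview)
Your proposal is correct and follows essentially the same approach as the paper: verify the hypotheses of Lemma~\ref{L:1april2024} by combining Theorem~\ref{t:latticeiso}\ref{t:latticeiso3} (for $\lVert\Phi_T\rVert_1=\lVert T\rVert=1$ and likewise for $T^{-1}$) with the observation that $\Phi_T$ and $\Phi_{T^{-1}}$ are mutual inverses. The only difference is cosmetic: you explicitly cite the computation in the proof of Corollary~\ref{c:extension to H0} for the inverse identity, whereas the paper simply asserts it.
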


\begin{proof}
This follows immediately from Lemma~\ref{L:1april2024} because the hypothesis that~$T$ is an isometric lattice isomorphism means that the induced map $\Phi_T\colon F^*\to E^*$ is a positively homogeneous bijection with inverse $\Phi_{T^{-1}}$, so in particular 
$\lVert\Phi_T\rVert_1 = \lVert T\rVert = 1$ and $\lVert\Phi_T^{-1}\rVert_1 =  \lVert T^{-1}\rVert = 1$ by Theorem~\ref{t:latticeiso}\ref{t:latticeiso3}. 
\end{proof}

We refer to \cite[Sec\-tion~10.3]{OTTT} for a detailed analysis of lattice isometries between free Banach lattices.

We conclude with an extension of another result from~\cite{OTTT}. 
By an isomorphic embedding of a Banach space~$E$ into a Banach space~$F$, we understand a bounded linear operator $J\colon E\to F$ which is injective and  has closed range, or in other words~$J$ is bounded below by some constant $\eta>0$ in the sense that $\lVert Jx\rVert\ge\eta\lVert x\rVert$ for every $x\in E$. In this case $J^*\colon F^*\to E^*$ is surjective, so~\eqref{OpInducedLatHom:eq2} implies that the associated lattice homomorphism $\overline{J}\colon\fbl^p[E]\to\fbl^p[F]$ is injective for every $1\le p<\infty$.  
The question of whether~$\overline{J}$ is an isomorphic embedding (that is, has closed range) was analyzed in depth in \cite[Section~3.3]{OTTT}. We shall next provide an extension of the main result therein. In the proof, we require the following standard identity between the operator norm of a bounded linear operator~$S$ from a Banach space~$E$ into~$\ell_p^n$ for some $1\le p<\infty$ and $n\in\N$ and the weak $p$-norm:
\begin{equation}\label{eq:opnorm_weakpnorm}
    \lVert S\rVert = \lVert (S^*e_j^*)_{j=1}^n\rVert_{p,\text{weak}},
\end{equation}
where $(e_j^*)_{j=1}^n$ denotes the unit vector basis of $(\ell_p^n)^*$. 

\begin{prop}\label{p:embed}
Let $J\colon E\hookrightarrow F$ be an isomorphic embedding of a Banach space~$E$ into a Banach space~$F$, and take $1\leq p<\infty$. The following conditions are equivalent:
\begin{enumerate}[label={\normalfont{(\alph*)}}]
\item\label{p:embed1} There is a constant $C\geq 1$ such that, for every $n\in\mathbb{N}$, every bounded linear operator $S\colon E\rightarrow\ell_p^n$ admits a bounded linear extension $\widetilde{S}\colon F\rightarrow \ell_p^n$ (in the sense that $S=\widetilde{S}J$) with $\lVert\widetilde{S}\rVert\leq C\lVert S\rVert$.
\item\label{p:embed3} The lattice homomorphism $\overline{J}$ is an isomorphic embedding of~$\fbl^p[E]$ into $\fbl^p[F]$.
\item\label{p:embed2} The lattice homomorphism $C_{J^*}$ is an isomorphic embedding of~$H^p[E]$ into $H^p[F]$.
\end{enumerate}
\end{prop}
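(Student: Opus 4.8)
The plan is to prove the cyclic chain \ref{p:embed1}$\Rightarrow$\ref{p:embed2}$\Rightarrow$\ref{p:embed3}$\Rightarrow$\ref{p:embed1}, after recording two reductions. First, since $\overline{J}f=f\circ J^*=C_{J^*}f$ for $f\in\fbl^p[E]$ by~\eqref{OpInducedLatHom:eq2}, the operator~$\overline{J}$ is precisely the restriction of~$C_{J^*}$ to the closed sublattice $\fbl^p[E]\subseteq H^p[E]$; moreover Lemma~\ref{L:compositionOps}\ref{L:compositionOps:ii} together with Example~\ref{OpInducedLatHom} gives the a priori upper bounds $\|\overline{J}\|=\|C_{J^*}\|=\|J^*\|_p=\|J\|$, so in both~\ref{p:embed3} and~\ref{p:embed2} the phrase ``isomorphic embedding'' reduces to the lower estimate ``bounded below''. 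Second, using the identity~\eqref{eq:opnorm_weakpnorm} I would translate~\ref{p:embed1} into a purely sequential \emph{lifting property}: writing $S\colon E\to\ell_p^n$ as the tuple $(x_j^*)_{j=1}^n=(S^*e_j^*)_{j=1}^n$ in~$E^*$ with $\|S\|=\|(x_j^*)_{j=1}^n\|_{p,\text{weak}}$, and observing that an extension with $\widetilde{S}J=S$ corresponds to a tuple $(y_j^*)_{j=1}^n$ in~$F^*$ with $J^*y_j^*=x_j^*$ and $\|\widetilde{S}\|=\|(y_j^*)_{j=1}^n\|_{p,\text{weak}}$, condition~\ref{p:embed1} becomes the statement that there is $C\ge1$ for which every finite tuple $(x_j^*)$ in~$E^*$ lifts through~$J^*$ to a tuple $(y_j^*)$ in~$F^*$ with $\|(y_j^*)\|_{p,\text{weak}}\le C\|(x_j^*)\|_{p,\text{weak}}$. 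Call this~(a').

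For \ref{p:embed1}$\Rightarrow$\ref{p:embed2} I would argue straight from~\eqref{Defn:FBLpNorm}: given $f\in H^p[E]$ and $\eps>0$, choose a tuple $(x_j^*)_{j=1}^n$ with $\|(x_j^*)\|_{p,\text{weak}}\le1$ and $\bigl(\sum_j|f(x_j^*)|^p\bigr)^{1/p}\ge\|f\|_{\fbl^p[E]}-\eps$; lifting it via~(a') to $(y_j^*)$ with $\|(y_j^*)\|_{p,\text{weak}}\le C$ and rescaling by~$1/C$ produces an admissible test tuple for~$\|C_{J^*}f\|_{\fbl^p[F]}$, whence $\|C_{J^*}f\|_{\fbl^p[F]}\ge\tfrac1C\bigl(\sum_j|f(x_j^*)|^p\bigr)^{1/p}\ge\tfrac1C(\|f\|_{\fbl^p[E]}-\eps)$, and letting $\eps\to0$ shows $C_{J^*}$ is bounded below. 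The implication \ref{p:embed2}$\Rightarrow$\ref{p:embed3} is then immediate from the first reduction: a lower bound for~$C_{J^*}$ on all of~$H^p[E]$ restricts to one for $\overline{J}=C_{J^*}{\upharpoonright_{\fbl^p[E]}}$, and the $\fbl^p[F]$- and $H^p[F]$-norms coincide on~$\fbl^p[F]$.

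The substantial implication is \ref{p:embed3}$\Rightarrow$\ref{p:embed1}, which I expect to be the main obstacle, and I would approach it by duality and separation. For fixed~$n$ the tuples in~$E^*$ carry the norm $\|\cdot\|_{p,\text{weak}}$, which is the dual norm of the projective tensor norm on $E\,\widehat{\otimes}_\pi\,\ell_{p'}^n$ (with $p'$ the conjugate exponent), and the coordinatewise application of~$J^*$ is the adjoint of $J\otimes\mathrm{id}_{\ell_{p'}^n}$. Separating against the weak*-compact, convex, symmetric image $\{(J^*y_j^*)_{j}:\|(y_j^*)_j\|_{p,\text{weak}}\le C\}$, property~(a') is equivalent to $J\otimes\mathrm{id}_{\ell_{p'}^n}$ being bounded below by~$1/C$ \emph{uniformly in~$n$}. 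To connect this with the hypothesis that~$\overline{J}$ is bounded below, I would use the evaluation operators $\Lambda^E_{(x_j^*)}\colon f\mapsto(f(x_j^*))_{j}$ into~$\ell_p^n$, which satisfy $\|\Lambda^E_{(x_j^*)}\|=\|(x_j^*)\|_{p,\text{weak}}$ and the intertwining relation $\Lambda^F_{(y_j^*)}\circ\overline{J}=\Lambda^E_{(J^*y_j^*)}$, together with the fact that composing any lift at the level of~$\fbl^p[F]^*$ with~$\delta^F$ returns an honest tuple in~$F^*$ lifting $(x_j^*)$ through~$J^*$.

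The genuinely delicate point—and the reason the statement has real content—is that a naive extension (respectively lifting through~$\overline{J}^*$) of an operator valued in, or lifted from, the finite-dimensional space~$\ell_p^n$ only yields a constant growing with~$n$, whereas the whole force of~\ref{p:embed3} must be to supply the \emph{uniform} constant~$C$. I would handle this by adapting the argument of~\cite[Section~3.3]{OTTT}: the separating functional produced above is a single finite ``witness'' in~$\fbl^p[E]$ that can be fed into the lower estimate for~$\overline{J}$, so that the resulting constant depends only on the bound for~$\overline{J}$ and not on~$n$. Checking that this adaptation goes through for every $1\le p<\infty$, and in particular that the passage between the projective tensor norm and the $\fbl^p$-norm can be controlled uniformly in~$n$, is precisely the crux of the proof.
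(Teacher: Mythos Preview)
Your proposal is correct and matches the paper's approach: your \ref{p:embed1}$\Rightarrow$\ref{p:embed2} via lifting test tuples through~$J^*$ is exactly the paper's argument, \ref{p:embed2}$\Rightarrow$\ref{p:embed3} is the same trivial restriction, and for \ref{p:embed3}$\Rightarrow$\ref{p:embed1} the paper simply invokes \cite[Theorem~3.7]{OTTT} for the equivalence \ref{p:embed1}$\Leftrightarrow$\ref{p:embed3} rather than reworking it. Your extended sketch of that last implication therefore goes beyond what the paper actually provides, but is not needed for the proof as written.
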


\begin{proof}
The equivalence of conditions~\ref{p:embed1} and~\ref{p:embed3} is essentially a restatement of \cite[Theorem~3.7, (1)$\Leftrightarrow$(3)]{OTTT}. (``Essentially'' refers to the fact that   \cite[Theorem 3.7]{OTTT} is stated only for isometric embeddings. However, it is explained in the text above it how to reduce the isomorphic case to the isometric.) Furthermore, the implication \ref{p:embed2}$\Rightarrow$\ref{p:embed3} is immediate from~\eqref{OpInducedLatHom:eq2}, so it only remains to verify that~\ref{p:embed1} implies~\ref{p:embed2}. 

Suppose that~\ref{p:embed1} is satisfied for some constant~$C>0$, and take $\eta\in(0,1)$. Then, for every  $f\in H^p[E]$, we can find $n\in\N$ and $(x_j^*)_{j=1}^n\subset E^*$ with $\lVert (x^*_j)_{j=1}^n\rVert_{p,\text{weak}} =1$ such that 
\[ \eta\lVert f\rVert_{\fbl^p[E]}\leq \Bigl(\sum_{j=1}^{n} \lvert f(x_j^*)\rvert^p\Bigr)^{\frac1p}. \] 
Define an operator $S\colon E\rightarrow\ell_p^n$ by $Sx=\sum_{j=1}^{n}\langle x,x_j^*\rangle e_j$, where $(e_j)_{j=1}^{n}$ denotes the unit vector basis of~$\ell_p^n$. Clearly~$S$ is bounded and linear, and $\lVert S\rVert =1$ by~\eqref{eq:opnorm_weakpnorm} because $S^*e_j^* = x_j^*$ for each $j=1,\ldots,n$, so the hypothesis implies that~$S$ admits a bounded linear extension $\widetilde{S}\colon F\rightarrow \ell_p^n$ whose norm is at most~$C$. Another appli\-ca\-tion of~\eqref{eq:opnorm_weakpnorm} shows that $\lVert (\widetilde{S}^*e^*_j)_{j=1}^n\rVert_{p,\text{weak}}\le C$, and consequently we have
\begin{align*}
C^p\lVert C_{J^*}f\rVert_{\fbl^p[F]}^p &\geq \sum_{j=1}^n\lvert (C_{J^*}f)(\widetilde{S}^*e_j^*)\rvert^p = \sum_{j=1}^n\lvert f(J^*\widetilde{S}^*e_j^*)\rvert^p\\ &= \sum_{j=1}^n\lvert f(S^*e_j^*)\rvert^p = \sum_{j=1}^n\lvert f(x_j^*)\rvert^p\ge \eta^p\lVert f\rVert_{\fbl^p[E]}^p.
\end{align*}
This proves that~$C_{J^*}$ is bounded below by $\eta/C$, so it is an isomorphic embedding.
\end{proof}

\section*{Acknowledgements}
\noindent 
This research was initiated during a visit of the second author to Lancaster Uni\-ver\-sity and continued during visits of the first author to ICMAT, supported by the London Mathematical Society (grants number 21805 and 42109). Furthermore, Tradacete's research is partially supported by grants PID\-2020-116398GB-I00 and CEX\-2019-000904-S funded by MCIN/AEI/10.13039/501100011033, as well as a 2022 Leonardo Grant for Researchers and Cultural Creators from the BBVA Foundation.

We wish to thank Antonio Avil\'es, Timur Oikhberg, Mitchell Taylor and Vladimir Troitsky for many helpful discussions and for sharing ideas relevant to the content of this paper.

\end{document}